\documentclass[10pt]{amsart}

\usepackage{mathtools}
\usepackage[round]{natbib}
\setcitestyle{numbers,square}
\usepackage{hyperref}
\usepackage{xcolor}
\usepackage[normalem]{ulem}

\usepackage{accents}



\sloppy

\setlength{\topmargin}{-.2in} \setlength{\marginparsep}{-2in}
\setlength{\marginparwidth}{1in} 
\setlength{\textwidth}{6.8in} 
\setlength{\textheight}{8.2in}
\setlength{\oddsidemargin}{-.2in}
\setlength{\evensidemargin}{-.2in}

\newcommand\redsout{\bgroup\markoverwith{\textcolor{red}{\rule[0.5ex]{2pt}{0.4pt}}}\ULon}

\newcommand{\E}{\mathbb{E}}
\newcommand{\N}{\mathbb{N}}
\newcommand{\Z}{\mathbb{Z}}
\newcommand{\Q}{\mathbb{Q}}
\newcommand{\R}{\mathbb{R}}

\newcommand{\Pb}{\mathbb{P}}
\newcommand{\tp}{t^{\prime}}
\newcommand{\pa}{a^{\prime}}
\newcommand{\kp}{k^{\prime}}

\newcommand{\ps}{s^{\prime}}

\newcommand{\hp}{h^{\prime}}

\newcommand{\ve}{\varepsilon}

\newcommand{\oun}{\overline{u}^{(n)}}

\newcommand{\uun}{\underline{u}^{(n)}}

\newcommand{\vep}{\varepsilon^{\prime}}

\newcommand{\odelta}{\overline{\delta}}

\def\={{\;\mathop{=}\limits^{\text{(law)}}\;}}

\newtheorem{theorem}{Theorem}[section]
\newtheorem{prop}[theorem]{Proposition}
\newtheorem{lemma}[theorem]{Lemma}
\newtheorem{defi}[theorem]{Definition}
\newtheorem{corollary}[theorem]{Corollary}
\newtheorem{hyp}[theorem]{Hypothesis} 

\newtheorem{remark}[theorem]{Remark}

\numberwithin{equation}{section}

\usepackage[english]{babel}

\DeclarePairedDelimiter\floor{\lfloor}{\rfloor}


%

\title[Path-by-path uniqueness of multidimensional SDE's on the plane]
{Path-by-path uniqueness of multidimensional SDE's on the plane with nondecreasing coefficients}

\author[A.-M. Bogso]{Antoine-Marie Bogso}
\address{University of Yaounde I\\
	Faculty of Sciences, Department of Mathematics\\
	P.O. Box 812, Yaounde, Cameroon \\
	 and African institute for Mathematical Sciences Ghana, P.O. Box LGDTD 20046, Summerhill Estates, East Legon Hills, Santoe, Acrra, Ghana}
\email{antoine.bogso@facsciences-uy1.cm, antoine@aims.edu.gh}           

\author[M. Dieye]{Moustapha Dieye}

\address{ \'Ecole polytechnique de Thi\`es, D\'epartement tronc commun, BP A10, Thi\`es, S\'en\'egal}

\email{moustapha@aims.edu.gh}

\author[O. Menoukeu Pamen]{Olivier Menoukeu Pamen}
\address{Institute for Financial and Actuarial Mathematics (IFAM) \\
	Department of Mathematical Sciences, University of Liverpool \\
	Liverpool L69 7ZL, UK \\
	and African institute for Mathematical Sciences Ghana, P.O. Box LGDTD 20046, Summerhill Estates, East Legon Hills, Santoe, Acrra, Ghana}
\email{menoukeu@liverpool.ac.uk}
\thanks{The project on which this publication is based has been carried out with funding provided by the Alexander von Humboldt Foundation, under the programme financed by the German Federal Ministry of Education and Research entitled German Research Chair No 01DG15010.}
\subjclass{Primary 60H05, 60H15.
}

\keywords{Brownian sheet, SDEs on the plane, path-by-path uniqueness, stochastic wave equations}

\date{\today}

\begin{document}

 \begin{abstract}
 In this paper we study path-by-path uniqueness for multidimensional stochastic differential equations driven by the Brownian sheet. We assume that the drift coefficient is unbounded, verifies a spatial linear growth condition and is componentwise nondeacreasing. Our approach consists  of showing the result for bounded and componentwise nondecreasing drift using both a local time-space representation and a law of iterated logarithm for Brownian sheets. The desired result follows using a Gronwall type  lemma on the plane.  As a by product, we obtain the existence of a unique strong solution of multidimensional SDEs driven by the Brownian sheet when the drift is non-decreasing and satisfies a spatial linear growth condition.  
 \end{abstract}

\maketitle 

\section{Introduction}
\label{intro}
In this work, we consider the following system of stochastic integral equations on the plane with additive noise:
\begin{align}\label{eqmainre1}
	X_{s,t}-X_{s,0}-X_{0,t}+X_{0,0}=\int_0^t\int_0^sb(\xi,\zeta,X_{\xi,\zeta})\mathrm{d}\xi\mathrm{d}\zeta+W_{s,t} 
	\text{ for }(s,t)\in\R_+^2,
\end{align}
where $b:\R_+^2\times \mathbb{R}^d\to\R^d$ is Borel measurable satisfying some conditions that will be specified later and $W=(W_{s,t},(s,t)\in\R_+^2)$ is a $d$-dimensional Brownian sheet given on a filtered probability space $(\Omega,\mathcal{F},(\mathcal{F}_{s,t},(s,t)\in\R_+^2),\Pb)$ with $\partial W=0$, where $\partial W$ stands for the restriction of $W$ to the  boundary $\partial D=\{0\}\times\R_+\cup\R_+\times\{0\}$ of $D=\R_+^2$. We endow $D$ with the partial order ``$\preceq$" (respectively ``$\prec$") defined by
$$
(s,t)\preceq(\ps,\tp)\text{ when }s\leq \ps\text{ and }t\leq\tp,
$$
respectively
$$
(s,t)\prec(\ps,\tp)\text{ when } s<\ps\text{ and }t<\tp.
$$
Observe that \eqref{eqmainre1} is a particular case of the more general non-Markovian type equation 
\begin{align}\label{eqmainre2}
	&	X_{s,t}-X_{s,0}-X_{0,t}+X_{0,0}\nonumber\\
	&=\int_0^t\int_0^sb(\xi,\zeta,X_{\xi,\zeta})\mathrm{d}\xi\,\mathrm{d}\zeta+\int_0^t\int_0^sa(\xi,\zeta,X_{\xi,\zeta})\,\mathrm{d}W_{\xi,\zeta} 
	\text{ for }(s,t)\in\R_+^2,
\end{align}
where $a:\,\R_+^2\times\R^d\to\R^d\times\R^d$ is a Borel measurable matrix function. 
Note that \eqref{eqmainre2} appears as an integral equation when one rewrites the following quasilinear stochastic hyperbolic differential equation 
\begin{align}\label{eqmainre2b}
	\frac{\partial^2 X_{s,t}}{\partial s\partial  t}=b(s,t,X_{s,t})+a(s,t,X_{s,t})\frac{\partial^2 W_{s,t}}{\partial s\partial t},
\end{align}
where the notation ``$\frac{\partial^2 W_{s,t}}{\partial s\partial t}$'' designates a white noise on $D$. As pointed out by Farr\'e and Nualart \cite{FaNu93} (see also \cite{QuTi07}), a formal $\frac{\pi}{4}$ rotation transforms \eqref{eqmainre2b} into a nonlinear stochastic wave equation.  This idea, thanks to Walsh \cite{Wa78}, has been used by Carmona and Nualart \cite{CaNu88} to provide existence and uniqueness results for the 1-dimension stochastic wave equation
\begin{align}\label{eqmainre2c}
	\frac{\partial^2 Y}{\partial t^2}(t,x)-\frac{\partial^2 Y}{\partial x^2}(t,x)=a(Y(t,x))\dot{W}(t,x)+b(Y(t,x))
\end{align}  
with some initial conditions $Y(0,\cdot)$ and $\dfrac{\partial Y}{\partial t}(0,\cdot)$, where $t$ varies in $\R_+$, $x$ varies in $\R$ and $\dot{W}$ denotes a white noise in time as well as in space. Reformulation of \eqref{eqmainre2c} using a $\frac{\pi}{4}$ rotation allows use of the rectangular increments of both $t$ and $x$ (see e.g. \cite[Section 1]{QuTi07}).

The problem \eqref{eqmainre1} can also be interpreted as a noisy analog of the so-called Darboux problem given by
\begin{align}\label{eqdarboux1}
	\frac{\partial^2y}{\partial s\partial t}=b\Big(s,t,y,\frac{\partial y}{\partial s},\frac{\partial y}{\partial t}\Big) \quad\text{for }(s,t)\in[0,T]\times[0,T],
\end{align}
with the initial conditions 
\begin{align}\label{eqdarboux2}
	y(0,t)=\sigma(t)\,\text{ on }[0,T]\,\text{ and }\,y(s,0)=\tau(s)\,\text{ on }[0,T],
\end{align}
where $\sigma$ and $\tau$ are absolutely continuous on $[0,T]$.
Using Caratheodory's theory of differential equations, Deimling \cite{De70} proved an existence theorem for the system \eqref{eqdarboux1}-\eqref{eqdarboux2} when $b$ is Borel measurable in the first two variables and bounded and continuous in the last three variables.

Existence and uniqueness of solutions to stochastic differential equations (SDEs) driven by a Brownian sheet has been widely studied. In the time homogeneous case, Cairoli \cite{Ca72} proved that \eqref{eqmainre2} has a unique strong solution when the coefficients are Lipschitz continuous. This result was generalised to the time dependent coefficients by Yeh \cite{Ye81} under an additional growth condition. Weak existence of solutions to \eqref{eqmainre2} was derived in  \cite{Ye85} assuming that the coefficients are continuous, satisfy a growth condition and the initial value has moment of order six. In all of the above mentioned works, the coefficients are at least continuous. Nualart and Tindel \cite{NuTi97}, show that \eqref{eqmainre2} has a unique strong solution when the drift is nondecreasing and bounded. Their results were extended to SDEs driven by two parameter martingales in \cite{NuTi98} (see also \cite{CaNu88,FaNu93} for further extensions). In \cite{ENO03}, the authors generalised the above results to SDEs driven by a fractional Brownian sheet.

In this work we are concerned with a different uniqueness question. In particular, we look at the notion of path-by-path uniqueness introduced by Davie \cite{Da07} (see also Flandoli \cite{Fla10}). Let $\mathcal{V}$, resp. $\partial \mathcal{V}$ be the space of continuous $\R^d$-valued functions on $D$, resp. $\partial D$. The following definition can be seen as a counterpart of \cite[Definition 1.5]{Fla10} in the case of two parameter processes.
\begin{defi}\label{defipathbpath}
	We say that the path-by-path uniqueness of solutions to \eqref{eqmainre1} holds when there exists a full $\Pb$-measure set $\Omega_0\subset\Omega$ such that for all $\omega\in\Omega_0$ the following statement is true: there exists at most one function  $y\in\mathcal{V}$ which satisfies $$\int_0^T\int_0^T|b(\xi,\zeta,y_{\xi,\zeta})|\mathrm{d}\xi \mathrm{d}\zeta<\infty,\text{ }\partial y=x,\text{ for some }x\in\partial \mathcal{V}\text{ and }T>0$$ and
	\begin{align}\label{eqmainpathbypath}
		y_{s,t}=x+\int_0^s\int_0^tb(\xi,\zeta,y_{\xi,\zeta})\mathrm{d}\xi \mathrm{d}\zeta+W_{s,t}(\omega),\text{ }\forall\,(s,t)\in[0,T]^2.
	\end{align}
\end{defi}
One of the motivations for studying path-by-path uniqueness comes from the regularisation by noise of random ODEs. For instance, let $v$ be a continuous function and let us consider the following one parameter equation in $\mathbb{ R}^d$
$$
X_t=X_0+\int_0^tb (s, X_s)\mathrm{d}s +v_t.
$$
We know that there exists a unique solution to the above equation when $b$  is Lipschitz in $x$, uniformly in $t$, with uniform linear growth. Observe that uniqueness also holds when $b$ is only locally Lipschitz. Under some weak conditions on $b$ the corresponding equation without $v$ might be ill-posed or uniqueness could not be valid. For example when $b$ is merely bounded and measurable one may ask if there is a notion of uniqueness if $v$ has some specific features. In other words, can we find a path $v$ that regularises the equation? The result obtained in \cite{Da07} shows that when $b\in L^\infty$, the Brownian path regularises the drift $b$ in the sense of Definition \ref{defipathbpath}. 
In addition, as shown in \cite[Section 1.8.5]{BFGM14}, path-by-path uniqueness is much stronger than pathwise uniqueness. Indeed Shaposhnikov and Wresch \cite[Section 4]{ShWr20} exhibit SDEs such that strong solutions exist, pathwise uniqueness holds and path-by-path uniqueness fails to hold. This is another motivation for studying path-by-path uniqueness even when pathwise uniqueness holds.

In the case of Brownian motion, when the drift is bounded and measurable, and the diffusion is reduced to the identity, the path-by-path uniqueness of equation \eqref{eqmainre2} was proved by Davie in \cite{Da07}. This result was extended in several directions. For non-constant diffusion, Davie in \cite{DavieM} proved path-by-path uniqueness of solution to \eqref{eqmainre2}, interpreting the equation in the rough path sense. In \cite{CG16}, the authors showed that path-by-path uniqueness holds if the Brownian motion is replaced by a $d$-dimensional fractional Brownian motion of Hurst parameter $H\in(0,1)$. It is also assumed that the drift $b$ can be merely a distribution as long as $H$ is sufficiently small. In \cite{Pri18}, Priola considered equations driven by a L\'evy process assuming that the drift is H\"older continuous (see \cite{Pri19} for the non-constant diffusion coefficient case). 

Path-by-path uniqueness is closely related to the regularisation by noise problem for ordinary (or partial) differential equations (ODEs or PDEs) which has recently drawn a lot of attention. Beck, Flandoli, Gubinelli and Maurelli \cite{BFGM14} proved a Sobolev regularity of solutions to the linear stochastic transport and continuity equations with drift in critical $L^p$ spaces. Such a result does not hold for the corresponding deterministic equations.  Butkovsky and Mytnik \cite{BM16} analysed the regularisation by noise phenomenon for a non-Lipschitz stochastic heat equation and proved path-by-path uniqueness for any initial condition in a certain class of a set of probability one. Amine, Mansouri and Proske \cite{AMP20} investigated path-by-path uniqueness for transport equations driven by the fractional Brownian motion of Hurst index $H<1/2$ with bounded vector-fields. In \cite{CG16,GG21} the authors solved the regularisation by noise problem from the point of view of additive perturbations. In particular, Galeati and Gubinelli \cite{CG16} considered generic perturbations without any specific probabilistic setting. Amine, Ba{\~n}os and Proske \cite{ABP17a} constructed a new Gaussian noise of fractional nature and proved that it has a strong regularising effect on a large class of ODEs. More recently, Harang and Perkowski \cite{HP21} studied the regularisation by noise problem for ODEs with vector fields given by Schwartz distributions and proved that if one perturbs such an equation by adding an infinitely regularising path, then it has a unique solution. Kremp and Perkowski \cite{KP20a} looked at multidimensional SDEs with distributional drift driven by symmetric $\alpha$-stable L\'evy processes for $\alpha\in(1,2]$. In all of the above mentioned works, the driving noise considered are one parameter processes. 

In what follows, we make use of the Girsanov theorem to show that the path-by-path uniqueness in our setting is equivalent to the uniqueness of a random ODE on the plane. For any $x\in\partial \mathcal{V}$ and any $\omega$ such that the  path $(s,t)\longmapsto W_{s,t}$ is continuous, we denote by $S(x,\omega)$ the set of functions in $\mathcal{V}$ that solve \eqref{eqmainpathbypath}. Under linear growth and monotonicity conditions on $b$, we prove that $S(x,\omega)$ has at most one element. By a vector translation argument, it suffices to show that $S(0,\omega)$ has no more than one element.

As in \cite[Section 1]{Da07}, we show the path-by-path uniqueness on $D^1=[0,1]^2$. Precisely, we consider the integral equation
\begin{align}\label{eqmainre1b}
	X_{s,t}-X_{s,0}-X_{0,t}+X_{0,0}=\int_0^t\int_0^sb(\xi,\zeta,X_{\xi,\zeta})\mathrm{d}\xi\mathrm{d}\zeta+W_{s,t}
	\text{ for }(s,t)\in D^1,
\end{align}
where the drift is of spatial linear growth. 
 There is no loss of generality in reducing the problem to $D^1$ since we can repeat the argument on any square $[m,m+1]\times[\ell,\ell+1]$, $(m,\ell)\in\N^2$, $m>0$.
We first suppose that $b$ is bounded and monotone.  Let $\mathcal{V}^1$ be the space of continuous $\R^d$-valued functions on $D^1$ and let $\mathcal{V}^1_0$ be the space of functions $y\in\mathcal{V}^1$ with $\partial y=0$, where $\partial y$ is the restriction of $y$ to $\partial D^1$ ($\partial D^1=\{0\}\times[0,1]\cup[0,1]\times\{0\}$). Let  $\Pb$ be the law of an $\R^d$-valued Brownian sheet on $D^1$ which vanishes on $\partial D^1$.	
	
 The function $L$ given by
\begin{align*}
	L(y)=\exp\Big(\int_0^1\int_0^1b(\xi,\zeta,y_{\xi,\zeta}) \mathrm{d}y_{\xi,\zeta}-\frac{1}{2}\int_0^1\int_0^1|b(\xi,\zeta,y_{\xi,\zeta})|^2\mathrm{d}\xi \mathrm{d}\zeta\Big)
\end{align*}
is well-defined for $\Pb$-a.e.  $y\in \mathcal{V}^1_0$. Moreover, if $y\in \mathcal{V}^1_0$ is chosen random, with law $\mathrm{d}\widetilde{\Pb}=L\mathrm{d}\Pb$, then, by Girsanov theorem (see for example \cite{Ca72, DM15, Im88}), the path $W$ defined by
\begin{align}\label{eqPathGirsanov}
	W_{s,t}=y_{s,t}-\int_0^s\int_0^tb(\xi,\zeta,y_{\xi,\zeta})\mathrm{d}\xi \mathrm{d}\zeta
\end{align}
has law $\Pb$. This means that $y$ is a solution to \eqref{eqmainre1b} with $W$ defined by \eqref{eqPathGirsanov}. Path-by-path uniqueness of solutions to \eqref{eqmainre1b} holds if and only if for $\widetilde{\Pb}$-a.e. $y\in \mathcal{V}^1_0$, $z=y$ is the only solution to
	\begin{align*}
		W_{s,t}=z_{s,t}-\int_0^s\int_0^tb(\xi,\zeta,z_{\xi,\zeta})\mathrm{d}\xi \mathrm{d}\zeta
	\end{align*} 
with $W$ given by \eqref{eqPathGirsanov}, which is equivalent to saying that for $\widetilde{\Pb}$-a.e. $y\in \mathcal{V}^1_0$, the only solution to
\begin{align}\label{eqOnPathbyPath}
	u(s,t)=\int_0^s\int_0^t\{b(\xi,\zeta,y_{\xi,\zeta}+u(\xi,\zeta))-b(\xi,\zeta,y_{\xi,\zeta})\}\mathrm{d}\xi \mathrm{d}\zeta
\end{align}
is $u=0$ (see e.g. \cite[Section 1]{Da07}). Since $\widetilde{\Pb}$ is absolutely continuous with respect to $\Pb$, it is enough to show that, if $W$ is an $\R^d$-valued Brownian sheet, then, with probability one, there is no nontrivial solution $u\in \mathcal{V}^1_0$ of 
\begin{align*}
	u(s,t)=\int_0^s\int_0^t\{b(\xi,\zeta,W_{\xi,\zeta}+u(\xi,\zeta))-b(\xi,\zeta,W_{\xi,\zeta})\}\mathrm{d}\xi \mathrm{d}\zeta.
\end{align*} 
This is the statement of Theorem \ref{maintheuniq2} which is extended to unbounded monotone drifts in Theorem \ref{maintheuniq1}. Our proof of Theorem \ref{maintheuniq2} relies on some estimates for an averaging  operator along the sheet (see Lemma \ref{lem:PseudoMetric1}). This result plays a key role in the proof of a Gronwall type lemma (see Lemma \ref{lem:GronwallSheetd}) which enables us  to prove path-by-path uniqueness of solutions to \eqref{eqmainre1}.  

The Yamada-Watanabe principle for one dimensional SDEs driven by Brownian sheets was derived in \cite{NuYe89} (see also \cite{Ye87}). More precisely, the authors show that combining weak existence and pathwise uniqueness yields existence of a unique strong solution in the two parameter setting. This result can be extended to the multidimensional case (see e.g. \cite[Remark 2]{Tu83}). When $b$ is of linear growth, we can show (see Lemma \ref{theqweak1}) that the SDE \eqref{eqmainre2} has a weak solution. The latter together with path by path uniqueness (and thus pathwise uniqueness) implies the existence of a unique strong solution to the SDE \eqref{eqmainre2} and therefore generalises some results in \cite{ENO03,NuTi97} to the multidimensional case. To the best of our knowledge, such a result has not been derived in the multidimensional case.


The remainder of the paper is structured as follows. In Section \ref{defcon}, we recall some basic definitions and concepts. The main results are stated and proved in Section \ref{sectmasinres}. In section \ref{prelresul}, we prove some preliminary results whereas Section \ref {Auxrel} is devoted to the proof of a number of auxiliary results.

\section{Basic definitions and concepts}\label{defcon}
In this section we recall some basic definitions and concepts for SDEs on the plane. We start with the definitions of filtered probability space and $d$-dimensional Brownian sheet that can be found in \cite{NuYe89,Ye81}.
\begin{defi}
	We call a filtered probability space any probability space $(\Omega,\mathcal{F},\Pb)$ with a family  $(\mathcal{F}_{s,t},(s,t)\in D)$ of sub-$\sigma$-algebras of $\mathcal{F}$ such that
	\begin{enumerate}
		\item $\mathcal{F}_{0,0}$ contains all null sets in $(\Omega,\mathcal{F},\Pb)$,
		\item $\{\mathcal{F}_{s,t},(s,t)\in D\}$ is nondecreasing in the sense that $\mathcal{F}_{s,t}\subset\mathcal{F}_{\ps,\tp}$ when $(s,t)\prec(\ps,\tp)$,
		\item $(\mathcal{F}_{s,t},(s,t)\in D)$ is a right-continuous system in the sense that
		$$
		\mathcal{F}_{s,t}=\bigcap\limits_{(\ps,\tp)\prec(s,t)}\mathcal{F}_{\ps,\tp}.
		$$
	\end{enumerate}
\end{defi}

\begin{defi}
	We call a one-dimensional $(\mathcal{F}_{s,t})$-Brownian sheet on a filtered probability space $(\Omega,\mathcal{F},(\mathcal{F}_{s,t},(s,t)\in D),\Pb)$ any real valued two-parameter stochastic process $W^{(0)}=(W^{(0)}_{s,t},(s,t)\in D)$ satisfying the following conditions:
	\begin{enumerate}
		\item $W^{(0)}$ is $(\mathcal{F}_{s,t},(s,t)\in D)$-adapted, i.e. $W^{(0)}_{s,t}$ is $\mathcal{F}_{s,t}$-measurable for every $(s,t)\in D$.
		\item Every sample function $(s,t)\longmapsto W^{(0)}_{s,t}(\omega)$ of $W^{(0)}$ is continuous on $D$.
		\item For every finite rectangle of the type $\Pi=]s,\ps]\times]t,\tp]\subset D$, the random variable $$W^{(0)}(\Pi):=W^{(0)}_{\ps,\tp}-W^{(0)}_{s,\tp}-W^{(0)}_{\ps,t}+W^{(0)}_{s,t}$$
		is centered, Gaussian with variance $(\ps-s)(\tp-t)$ and independent of $\mathcal{F}_{s,1}\vee\mathcal{F}_{1,t}$.
	\end{enumerate} 
	We call a $d$-dimensional Brownian sheet any $\R^d$-valued two-parameter process $W=(W^{(1)},\ldots,W^{(d)})$ such that $W^{(i)}$, $i=1,\ldots,d$ are independent one-dimensional Brownian sheets.
\end{defi}

In the following, we discuss the notions of weak and strong solutions to the SDE \eqref{eqmainre2} (see for example \cite[Section 2]{Ye81}).  We start with the definition of a weak solution.  
\begin{defi}
	A weak solution to the SDE \eqref{eqmainre2} is a system $(\Omega,\mathcal{F},(\mathcal{F}_{s,t}),W,X=(X_{s,t}),\Pb)$ such that 
	\begin{enumerate}
		\item $(\Omega,\mathcal{F},(\mathcal{F}_{s,t},(s,t)\in D),\Pb)$ is a filtered probability space,
		\item $W=(W_{s,t},(s,t)\in D)$ is a $d$-dimensional $(\mathcal{F}_{s,t})$-Brownian sheet with $\partial W=0$,
		\item $X$ is $(\mathcal{F}_{s,t})$-adapted, has continuous sample paths and, $\Pb$-a.s.,
		\begin{align*}
			&	X_{s,t}-X_{s,0}-X_{0,t}+X_{0,0}\\
			&=\int_0^t\int_0^sb(\xi,\zeta,X_{\xi,\zeta})\mathrm{d}\xi\mathrm{d}\zeta+\int_0^t\int_0^sa(\xi,\zeta,X_{\xi,\zeta})\,\mathrm{d}W_{\xi,\zeta},\quad\forall\,(s,t)\in D.
		\end{align*}
	\end{enumerate}
\end{defi}
 
\begin{remark}
	When the drift $b$ satisfies a linear growth condition, weak existence holds for \eqref{eqmainre1b} (see Theorem \ref{theqweak1}).
\end{remark}

We now turn to the notion of strong solution. Let $\mathcal{B}(\mathcal{V})$ (respectively $\mathcal{B}(\partial \mathcal{V})$) be the $\sigma$-algebra of Borel sets in the space $\mathcal{V}$ (respectively $\partial \mathcal{V}$) of all continuous $\R^d$-valued functions on $D$ (respectively $\partial D$) with respect to the metric topology of uniform convergence on compact subsets of $D$. The subsequent definitions are borrowed from \cite{NuYe89}.
\begin{defi}
	Let $\overline{\mathcal{B}(\mathcal{V})}$ be the completion of $\mathcal{B}(\mathcal{V})$ with respect to the Wiener measure $m_\mathcal{V}$ on $(\mathcal{V},\mathcal{B}(\mathcal{V}))$ concentrated on $\mathcal{V}_0$. For every $(s,t)\in D$, we denote by $\mathcal{B}_{s,t}(\mathcal{V})$ the $\sigma$-algebra generated by the cylinder sets of the type $\{w\in \mathcal{V};\,w(\xi,\zeta)\in E\}$ for some $(\xi,\zeta)\preceq(s,t)$ and $E\in\mathcal{B}(R^d)$ and by $\overline{\mathcal{B}_{s,t}(\mathcal{V})}$ the $\sigma$-algebra generated by $\mathcal{B}_{s,t}(\mathcal{V})$ and all the null sets in $(\mathcal{V},\overline{\mathcal{B}(\mathcal{V})},m_\mathcal{V})$. Let $\overline{\mathcal{B}(\partial \mathcal{V}\times \mathcal{V})}^{\lambda\times m_\mathcal{V}}$ be the completion of $\mathcal{B}(\partial \mathcal{V}\times \mathcal{V})$ with respect to the product measure $\lambda\times m_\mathcal{V}$ for any probability measure $\lambda$ on $\partial \mathcal{V}$.
\end{defi}
\begin{defi}\label{DefiClassTransf}
	Let $\mathbf{T}(\partial \mathcal{V}\times \mathcal{V})$ be the class of transformations $F$ of $\partial \mathcal{V}\times \mathcal{V}$ into $\mathcal{V}$ which satisfies the condition that for every probability measure $\lambda$ on $(\partial \mathcal{V},\mathcal{B}(\partial \mathcal{V}))$, there exists a transformation $F_{\lambda}$ of $\partial \mathcal{V}\times \mathcal{V}$ into $\mathcal{V}$ such that
	\begin{enumerate}
		\item $F_{\lambda}$ is $\overline{\mathcal{B}(\partial \mathcal{V}\times \mathcal{V})}^{\lambda\times m_\mathcal{V}}/\mathcal{B}(\mathcal{V})$ measurable,
		\item For every $x\in\partial \mathcal{V}$, $F_{\lambda}[x,\cdot]$ is $\overline{\mathcal{B}_{s,t}(\mathcal{V})}/\mathcal{B}_{s,t}(\mathcal{V})$ measurable, for every $(s,t)\in D$,
		\item There exists a null set $N_{\lambda}$ in $(\partial \mathcal{V},\mathcal{B}(\partial \mathcal{V}),\lambda)$ such that $F[x,w]=F_{\lambda}[x,w]$ for almost all $w$ in $(\mathcal{V},\overline{\mathcal{B}(\mathcal{V})},m_\mathcal{V})$ and all $x\in \partial \mathcal{V}\setminus N_{\lambda}$.
	\end{enumerate}
\end{defi}

\begin{defi}
	Let $(X,W)$ be a weak solution to the SDE \eqref{eqmainre2} on a filtered probability space $(\Omega,\mathcal{F},\{\mathcal{F}_{s,t},(s,t)\in D\},\Pb)$ and let $\lambda$ be the probability distribution of $\partial X$. We call $(X,W)$ a strong solution to \eqref{eqmainre2} if there exists a transformation $F_{\lambda}$ of $\partial \mathcal{V}\times \mathcal{V}$ into $\mathcal{V}$ satisfying Conditions 1 and 2 of Definition \ref{DefiClassTransf} such that
	\begin{align*}
		X=F_{\lambda}[\partial X,W]\,\text{ }\Pb\text{-a.s. on }\Omega.
	\end{align*}
\end{defi}
Here is a well known concept of uniqueness associated to strong solutions of \eqref{eqmainre2} provided such solutions exist.
\begin{defi}
	We say that the SDE \eqref{eqmainre2} has a unique strong solution if there exists $F\in\mathbf{T}(\partial \mathcal{V}\times \mathcal{V})$ such that,
	\begin{enumerate}
		\item if $(\Omega,\mathcal{F},(\mathcal{F}_{s,t},(s,t)\in D),\Pb)$ is a filtered probability space on which an $\R^d$-valued $(\mathcal{F}_{s,t},(s,t)\in D)$-Brownian sheet $W$ with $\partial W=0$ exists, then for every continuous $(\mathcal{F}_{s,t},(s,t)\in D)$-adapted boundary process $Z$ on  $(\Omega,\mathcal{F},(\mathcal{F}_{s,t},(s,t)\in D),\Pb)$ whose probability distribution is denoted by $\lambda$, $(X,W)$ with $X=F(Z,W)$ is a weak solution of \eqref{eqmainre2} with $\partial X=Z$ $\Pb$-a.s. on $\Omega$.
		\item if $(X,W)$ is a weak solution of \eqref{eqmainre2} on a filtered probability space $(\Omega,\mathcal{F},(\mathcal{F}_{s,t},(s,t)\in D),\Pb)$ and the probability distribution of $\partial X$ is denoted by $\lambda$, then $X=F_{\lambda}[\partial X,W]$ $\Pb$-a.s. on $\Omega$.
	\end{enumerate}
\end{defi}
There are two classical notions of uniqueness associated to weak solutions (see e.g. \cite[Definitions 1.2 and 1.7]{NuYe89}). 
\begin{defi}
	We say that the solution to the SDE \eqref{eqmainre2} is unique in the sense of probability distribution if whenever $(X,W)$ and $(X^{\prime},W^{\prime})$ are two solutions of \eqref{eqmainre2} on two possibly different  filtered probability spaces and $\partial X=x=\partial X^{\prime}$ for some $x\in\partial \mathcal{V}$, then $X$ and $X^{\prime}$ have the same probability distribution on $(\mathcal{V},\mathcal{B}(\mathcal{V}))$.
\end{defi}
\begin{defi}
	We say that the pathwise uniqueness of solutions to the SDE \eqref{eqmainre2} holds if whenever $(X,W)$ and $(X^{\prime},W)$ with the same $W$ are two solutions to \eqref{eqmainre2} on the same probability space and $\partial X=\partial X^{\prime}$, then $X=X^{\prime}$ for $\Pb$-a.e. $\omega\in\Omega$.
\end{defi}

\section{Main results}\label{sectmasinres}

In this section, we present the main results of this paper. We assume the following conditions on the drift: 
\begin{hyp}\label{hyp1}\leavevmode
	\begin{enumerate}
		\item $b:[0,1]^2\times \mathbb{R}^d\rightarrow \mathbb{R}^d$ is a Borel measurable function satisfying the spatial linear growth condition, that is, there exists a constant $M$ such that 
		$$
		|b(t,s,x)|\leq M(1+|x|) \text{ for all } x\in \mathbb{R}^d.
		$$
		\item $b$ is componentwise nondecreasing in space, that is, each component $(b_i)_{1\leq i\leq d}$ is componentwise nondecreasing in space. More precisely for $x,y \in \mathbb{R}^d$, we have:
		$$
		x\preceq  y \Rightarrow b_i(x)\leq b_i(y), 1\leq i\leq d,
		$$
		where $x\preceq  y$ means $x_i\leq y_i$ for all $i\in\{1,\ldots,d\}$. 
	\end{enumerate}
\end{hyp}

\begin{theorem}\label{maintheuniq1}
	Suppose $b$ satisfies Hypothesis \ref{hyp1}. Then for almost every Brownian sheet path $W$, there exists a unique continuous function $X :[0,1]^2\rightarrow \mathbb{R}^d$ satisfying \eqref{eqmainre1b}.
\end{theorem}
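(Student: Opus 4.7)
The plan is to derive Theorem \ref{maintheuniq1} from its bounded-drift counterpart, Theorem \ref{maintheuniq2}, by a random localization argument that exploits the spatial linear growth to obtain an a priori bound on every continuous solution. By the translation argument already used in the paper, there is no loss of generality in assuming the boundary condition $\partial X=0$. If $X \in \mathcal{V}^1_0$ satisfies \eqref{eqmainre1b}, then from $|b(s,t,\cdot)| \le M(1+|\cdot|)$ one gets
\begin{align*}
|X_{s,t}| \le \sup_{D^1}|W(\omega)| + M + M\int_0^s\int_0^t|X_{\xi,\zeta}|\,d\xi\,d\zeta,
\end{align*}
so a two-parameter Gronwall inequality on $[0,1]^2$ (see e.g.\ Cairoli \cite{Ca72}) furnishes a random constant $K(\omega) < \infty$, defined on a full-measure set $\Omega_1$, such that $\sup_{D^1}|X| \le K(\omega)$ for every continuous solution $X$ of \eqref{eqmainre1b}.

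For each $N \in \N$, define the componentwise truncation $\rho_N \colon \R^d \to \R^d$ by $(\rho_N z)_i = (z_i \wedge N) \vee (-N)$ and set $b_N(s,t,z) := b(s,t,\rho_N(z))$. Since $\rho_N$ is componentwise nondecreasing and $b$ is componentwise nondecreasing in its spatial argument by Hypothesis \ref{hyp1}(2), the composition $b_N$ is componentwise nondecreasing in space; moreover $\|b_N\|_\infty \le M(1+N\sqrt d)$, so $b_N$ is covered by Theorem \ref{maintheuniq2}. Let $\Omega_N$ be the full-measure set on which path-by-path uniqueness holds for the SDE driven by $b_N$, and set $\Omega_0 := \Omega_1 \cap \bigcap_{N \in \N}\Omega_N$, which still has full measure. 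For $\omega \in \Omega_0$, choose any integer $N \ge K(\omega)$: any two continuous solutions $X, \widetilde X$ of \eqref{eqmainre1b} satisfy $\rho_N(X_{s,t}) = X_{s,t}$ and $\rho_N(\widetilde X_{s,t}) = \widetilde X_{s,t}$ throughout $D^1$, hence both solve the SDE with drift $b_N$; Theorem \ref{maintheuniq2} applied on $\Omega_N$ forces $X = \widetilde X$.

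For existence of a solution, I would combine the weak existence provided by Lemma \ref{theqweak1} (valid under spatial linear growth) with pathwise uniqueness --- which follows a fortiori from path-by-path uniqueness --- and invoke the Yamada--Watanabe principle for two-parameter SDEs from \cite{NuYe89, Tu83}. This yields a strong solution, which on $\Omega_0$ is then the \emph{unique} continuous function satisfying \eqref{eqmainre1b}.

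The main obstacle I expect is not the reduction itself but the verification that $b_N$ truly satisfies every structural hypothesis of Theorem \ref{maintheuniq2} (boundedness together with componentwise spatial monotonicity). This is precisely why Hypothesis \ref{hyp1}(2) is formulated componentwise rather than as an abstract partial-order monotonicity, since a coarser notion of monotonicity would not be preserved by the coordinate-wise truncation $\rho_N$. A second minor point is making sure the exceptional null set does not depend on the particular solution; this is handled by intersecting the countable family $\{\Omega_N\}_{N\in\N}$ \emph{before} localizing at the random level $K(\omega)$.
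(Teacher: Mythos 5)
Your argument is correct and reaches the same conclusion, but it is organized around a genuinely different localization: you truncate the \emph{argument} of $b$ via $\rho_N$, whereas the paper truncates the \emph{value} of each component $b_i$ via the clamp $g_n(a)=(a\wedge n)\vee(-n)$, setting $b^{(n)}_i=g_n\circ b_i$. Both truncations preserve componentwise monotonicity and boundedness, so both are legitimate inputs to Theorem \ref{maintheuniq2}. Your version has the mild advantage that the verification ``$b_N=b$ along a solution that stays inside $[-N,N]^d$'' is immediate ($\rho_N$ fixes the argument), while the paper must additionally track $|b(s,t,X^{(n)}_{s,t})|\le M(1+|X^{(n)}_{s,t}|)\le n$, which is where the constant $C_1^*=1+MI_0(2\sqrt M)$ and the auxiliary arithmetic enter. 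The paper's truncation, on the other hand, keeps the bound $|b^{(n)}|\le M(1+|x|)$ for free.

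The more substantive divergence is in the existence part. The paper constructs the solution \emph{directly}: for each $\omega\in\Omega_\infty$ and $n$ large (depending on $\sup_{D^1}|W(\omega)|$), the unique solution $X^{(n)}(\omega)$ of the truncated equation is shown, via the two-parameter Gronwall bound, to already solve the original equation, so no appeal to the Yamada--Watanabe principle is needed inside the proof of Theorem \ref{maintheuniq1}; that principle is deferred to the Corollary. You instead invoke weak existence plus the Yamada--Watanabe principle from \cite{NuYe89,Tu83}. That works, but it imports a stronger abstract tool than necessary and leans on the multidimensional two-parameter Yamada--Watanabe statement, which in the cited literature is only sketched (cf.\ the paper's own remark pointing to \cite[Remark 2]{Tu83}). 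A cleaner route, entirely within your framework, is available: since $|b_N(s,t,z)|=|b(s,t,\rho_N(z))|\le M(1+|\rho_N(z)|)\le M(1+|z|)$, the same Gronwall bound controls $\sup_{D^1}|X^{(N)}|$, and for $N\ge K(\omega)$ the unique solution $X^{(N)}(\omega)$ of the $b_N$-equation then satisfies $\rho_N(X^{(N)})=X^{(N)}$, hence solves the original equation. Two small remarks on the write-up: the two-parameter Gronwall inequality is taken from \cite{Sno72,Ra76} (and \cite[p.~145]{Nu06}), not \cite{Ca72}; and if you do keep the Yamada--Watanabe route, the a.s.\ existence set it produces must be intersected with $\Omega_0$ before concluding.
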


 \begin{corollary}
		Suppose  $b$ is as in Theorem \ref{maintheuniq1}. Then the SDE \eqref{eqmainre1b} admits a unique strong solution. 
	\end{corollary}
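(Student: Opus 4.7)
The plan is to combine three ingredients already available in the paper: weak existence under the spatial linear growth hypothesis, the path-by-path uniqueness established in Theorem \ref{maintheuniq1}, and a Yamada--Watanabe type principle adapted to two-parameter SDEs. The overall strategy mirrors the classical one-parameter argument: weak existence plus pathwise uniqueness gives strong existence and uniqueness.

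First, I would invoke Lemma \ref{theqweak1}, which guarantees the existence of a weak solution to \eqref{eqmainre1b} whenever $b$ is of spatial linear growth; since Hypothesis \ref{hyp1} includes this condition, a weak solution exists on some filtered probability space. Next, I would translate path-by-path uniqueness into pathwise uniqueness. If $(X,W)$ and $(X',W)$ are two weak solutions on the same filtered probability space $(\Omega,\F,(\F_{s,t}),\Pb)$ driven by the same Brownian sheet $W$ and with $\partial X=\partial X'$, then by Theorem \ref{maintheuniq1} there is a full $\Pb$-measure set $\Omega_0$ on which \eqref{eqmainre1b} admits at most one continuous solution. For every $\omega\in\Omega_0$ both $X(\omega)$ and $X'(\omega)$ are continuous functions satisfying \eqref{eqmainre1b} with the same driving path $W(\omega)$ and the same boundary value, so they coincide. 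Hence pathwise uniqueness holds.

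To conclude, I would apply the two-parameter Yamada--Watanabe principle: weak existence together with pathwise uniqueness implies the existence of a unique strong solution. In the one-dimensional setting this is exactly the content of \cite{NuYe89} (see also \cite{Ye87}), and the extension to the multidimensional case is recorded in \cite[Remark~2]{Tu83}. More concretely, one shows that the family of regular conditional distributions of $X$ given $\partial X$ and $W$ is concentrated on a single point for $\Pb$-a.e.\ $(\partial X,W)$, and this point depends measurably on $(\partial X,W)$, yielding the functional representation $X=F_{\lambda}[\partial X,W]$ with $F\in\mathbf{T}(\partial\mathcal{V}\times\mathcal{V})$ required in the definition of strong uniqueness.

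The only step that requires any care is the reduction from path-by-path uniqueness as stated in Definition \ref{defipathbpath} (which is formulated for deterministic continuous paths and uses the equation \eqref{eqmainpathbypath}) to pathwise uniqueness of the weak solutions $(X,W)$ and $(X',W)$; but this is immediate once one fixes $\omega$ in the joint full-measure set on which both solutions are continuous, their boundary values agree, and the common driving sheet $W(\omega)$ lies in the exceptional set provided by Theorem \ref{maintheuniq1}. After this, invoking the two-parameter Yamada--Watanabe theorem is routine, and the corollary follows.
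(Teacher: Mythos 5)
Your proposal is correct and follows exactly the route the paper itself takes: weak existence from the linear growth hypothesis (Theorem \ref{theqweak1}), the observation that path-by-path uniqueness implies pathwise uniqueness, and the two-parameter Yamada--Watanabe principle from \cite{NuYe89}. You have merely made explicit the reduction step from path-by-path to pathwise uniqueness, which the paper delegates to the cited reference \cite{BFGM14}.
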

\begin{proof}
	It follows from the fact that under the conditions of the Corollary, \eqref{eqmainre1b} has a weak solution. In addition, since path-by-path uniqueness implies pathwise uniqueness (see \cite[Page 9, Section 1.8.4]{BFGM14}), the result follows from the Yamada-Watanabe type principle for SDEs driven by Brownian sheets (see e.g. Nualart and Yeh \cite{NuYe89}).
	\end{proof}

The proof of Theorem \ref{maintheuniq1} relies on the following theorem

\begin{theorem}\label{maintheuniq2}
	Suppose $b$ is as in Theorem \ref{maintheuniq1}. Suppose in addition that $b$ is uniformly bounded. Then for almost every Brownian sheet path $W$, there exists a unique continuous function $X :[0,1]^2\rightarrow \mathbb{R}^d$ satisfying \eqref{eqmainre1b}.
\end{theorem}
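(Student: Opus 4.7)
The plan is to exploit the Girsanov reduction already carried out in the Introduction: it is enough to prove that on a single full $\Pb$-measure event the only function $u\in\mathcal{V}^1_0$ satisfying
\begin{equation}\label{eq:propu}
u(s,t)=\int_0^s\int_0^t\bigl\{b(\xi,\zeta,W_{\xi,\zeta}+u(\xi,\zeta))-b(\xi,\zeta,W_{\xi,\zeta})\bigr\}\,\mathrm{d}\xi\,\mathrm{d}\zeta,\quad(s,t)\in D^1,
\end{equation}
is $u\equiv 0$. I fix one such realisation $\omega$ and argue pathwise from now on, so all the estimates below must be uniform in $u$ but may depend on $\omega$ through almost-sure constants.

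\textbf{Sandwiching by a nonnegative integrand via componentwise monotonicity.} Hypothesis \ref{hyp1}(ii) tells us that each component $b_i(\xi,\zeta,\cdot)$ is nondecreasing in every coordinate of $x$. Writing $|u(\xi,\zeta)|_{\mathrm{v}}=(|u_1|,\dots,|u_d|)(\xi,\zeta)$ for the coordinatewise absolute value, monotonicity yields the sandwich
\[
b_i(W_{\xi,\zeta}-|u|_{\mathrm{v}})\le b_i(W_{\xi,\zeta}+u(\xi,\zeta))\le b_i(W_{\xi,\zeta}+|u|_{\mathrm{v}}),
\]
so that
\[
\bigl|b_i(W_{\xi,\zeta}+u(\xi,\zeta))-b_i(W_{\xi,\zeta})\bigr|\le\Delta_i(\xi,\zeta):=b_i(W_{\xi,\zeta}+|u|_{\mathrm{v}})-b_i(W_{\xi,\zeta}-|u|_{\mathrm{v}}).
\]
The key point is that $\Delta_i\ge 0$ is built from two translates of the same monotone function at the sheet, which is exactly the object controlled by the averaging operator of Lemma \ref{lem:PseudoMetric1}.

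\textbf{Averaging operator estimate and planar Gronwall.} Applied to each piece $(\xi,\zeta)\mapsto b_i(\xi,\zeta,W_{\xi,\zeta}+|u|_{\mathrm{v}}(\xi,\zeta))-b_i(\xi,\zeta,W_{\xi,\zeta})$, Lemma \ref{lem:PseudoMetric1} delivers an almost sure pseudo-metric type bound of the form
\[
\Bigl|\int_0^s\int_0^t\bigl[b_i(W_{\xi,\zeta}+v(\xi,\zeta))-b_i(W_{\xi,\zeta})\bigr]\mathrm{d}\xi\,\mathrm{d}\zeta\Bigr|\le C(\omega)\,\psi(s,t)\,\|v\|_{\infty;[0,s]\times[0,t]}^{\alpha}
\]
for every measurable $v:D^1\to\R^d$, with some exponent $\alpha\in(0,1)$ and a modulus $\psi$ vanishing on $\partial D^1$. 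Substituting $v=|u|_{\mathrm{v}}$ into \eqref{eq:propu} and taking suprema over subrectangles gives
\[
\|u\|_{\infty;[0,s]\times[0,t]}\le C(\omega)\,\psi(s,t)\,\|u\|_{\infty;[0,s]\times[0,t]}^{\alpha}.
\]
Because $\alpha<1$, this is precisely the setting of the two-parameter Gronwall Lemma \ref{lem:GronwallSheetd}; applying it near $\partial D^1$ (where $\psi$ is small) kills $u$ on a thin collar around the boundary, and a finite rectangle-by-rectangle bootstrap then propagates $u\equiv 0$ to all of $D^1$.

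\textbf{Main obstacle.} The substantive step is the almost sure averaging estimate of Lemma \ref{lem:PseudoMetric1} with exponent $\alpha<1$ holding \emph{uniformly in the rectangle} $[0,s]\times[0,t]$. In Davie's one-parameter argument this is produced through the Brownian local time and a law of the iterated logarithm controlling the occupation measure. The Brownian sheet has no classical local time, so the main technical work consists of (i) a local time--space representation of the sheet, and (ii) a two-parameter law of the iterated logarithm, both alluded to in the Introduction, that together yield the required Hölder control of the averaging operator on a single $\Pb$-full event. Once those are established, the monotonicity sandwich and the planar Gronwall argument close the proof essentially mechanically.
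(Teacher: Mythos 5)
Your high-level outline matches the paper's strategy — Girsanov reduction to the random ODE \eqref{eq:IntPathByPath}, exploiting componentwise monotonicity, an almost-sure averaging estimate, and a planar Gronwall-type iteration — but several of the mechanism details you describe are wrong in ways that would prevent the argument from closing.

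First, Lemma~\ref{lem:PseudoMetric1} does \emph{not} give a H\"older bound with exponent $\alpha<1$, and it does \emph{not} take a varying integrand. It is a pointwise estimate, valid on a single full-measure event, for the quantity $\varrho_{nk\kp}(x,y)$ with \emph{fixed} $x,y$ (dyadic, later extended by monotonicity to all of $\mathbf{Q}$), of the form $|\varrho_{nk\kp}(x,y)|\le C(\omega)2^{-n}\bigl[\sqrt{n}+(\log^+\frac{1}{|x-y|})^{1/2}\bigr]|x-y|$, i.e.\ essentially Lipschitz up to a logarithmic modulus. An inequality $\|u\|_\infty\le C\psi\|u\|_\infty^\alpha$ with $\alpha<1$ would anyway only furnish a nonzero \emph{a priori} bound $\|u\|_\infty\le (C\psi)^{1/(1-\alpha)}$ — it does not force $u\equiv0$ — so the "Osgood-type'' Gronwall you invoke is not what Lemma~\ref{lem:GronwallSheetd} says nor what could work here.

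Second, you gloss over the key maneuver that makes the averaging estimate applicable to \eqref{eq:IntPathByPath}: the integrand contains $u(\xi,\zeta)$ varying over the rectangle, but the estimate is for constant shifts. The paper handles this by using componentwise monotonicity to replace $u(\xi,\zeta)$ (resp.\ $u^+(\xi,\zeta)$) by the \emph{constant} vector $\oun(k+1,\kp+1)$ on each dyadic cell $I_{n,k}\times I_{n,\kp}$, which reduces the integral to an evaluation of $\varrho_{nk\kp}$ at a fixed argument. Without this cell-wise freeze, the averaging lemma simply does not apply; your sandwich by $\pm|u|_{\mathrm v}$ is correct but does not by itself produce a constant argument for $\varrho_{nk\kp}$.

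Third, the actual Gronwall mechanism of Lemma~\ref{lem:GronwallSheetd} is a double induction over the dyadic grid $(k,\kp)$, producing the bound $\max\{|\oun(k,\kp)|,|\uun(k,\kp)|\}\le(3\sqrt{d})^{k+\kp-1}(1+C_1\sqrt{dn}2^{-n})^{k+\kp}\beta(n)$. Uniqueness then follows because, for fixed $\omega$, the worst case over all cells is $(4\sqrt{d})^{2^{n+1}}\beta(n)$, which tends to $0$ as $n\to\infty$ thanks to $\beta(n)\le 2^{-4^{2n/3}}$ decaying super-geometrically in $2^{n}$. This also requires the refined bound $|\varrho_{nk\kp}(0,x)|\le C(\omega)\sqrt{n}\,2^{-n}(|x|+2^{-4^n})$ of Lemmas~\ref{lem:PseudoMetric2}--\ref{lem:PseudoMetric3}, which you do not mention; it is needed to start the induction from cells where $|u|$ is only $O(\beta(n))$. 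The "thin collar then bootstrap'' picture you describe is not how the iteration is set up, and as written it would not control the geometric $(3\sqrt{d})^{k+\kp}$ blow-up across $O(2^n)$ cells.
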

Recall that using the Girsanov theorem, path-by-path uniqueness holds if there exists $\Omega_1\subset\Omega$ with $\Pb(\Omega_1)=1$ such that for any $\omega\in\Omega_1$, there is no nontrivial solution $u \in  C([0,1]^2,\mathbb{R}^d)$ to the following system of integral equations \begin{align}\label{eq:IntPathByPath}
	u(s,t)=\int_0^t\int_0^s\{b(\xi,\zeta,W_{\xi,\zeta}(\omega)+u(\xi,\zeta))-b(\xi,\zeta,W_{\xi,\zeta}(\omega))\}\mathrm{d}\xi\mathrm{d}\zeta,\text{ }\forall\,(s,t)\in[0,1]^2.
\end{align}
Let us also consider the set $\mathbf{Q}=[-1,1]^d$ and its dyadic decomposition. Recall that $x\in\mathbf{Q}$ is called a dyadic number if it is a rational with denominator a power of $2$. 
  The next theorem is equivalent to Theorem \ref{maintheuniq2}.
\begin{theorem}\label{theo:DavieSheetMonotoneD}
	Let $W:=\left(W_{s,t},(s,t)\in[0,1]^2\right)$ be a $d$-dimensional Brownian sheet defined on a filtered probability space $(\Omega,\mathcal{F},\mathbb{F},\Pb)$, where $\mathbb{F}=(\mathcal{F}_{s,t};s,t\in[0,1])$.
	Let $b$ be as in Theorem \ref{maintheuniq2}.
	Then there exists $\Omega_1\subset\Omega$ with $\Pb(\Omega_1)=1$ such that for any $\omega\in\Omega_1$, $u=0$ is the unique solution in $\mathcal{V}_0$ to the system of integral equations \eqref{eq:IntPathByPath}.
\end{theorem}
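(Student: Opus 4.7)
The plan is to derive a planar integral inequality for any candidate solution $u$ of \eqref{eq:IntPathByPath} and then invoke the planar Gronwall-type lemma (Lemma \ref{lem:GronwallSheetd}) to conclude $u\equiv 0$. The componentwise monotonicity in Hypothesis \ref{hyp1} is essential: it allows the difference $b(\xi,\zeta,W_{\xi,\zeta}+u(\xi,\zeta))-b(\xi,\zeta,W_{\xi,\zeta})$, in which the shift depends on $(\xi,\zeta)$, to be sandwiched by analogous differences with constant shifts drawn from a countable dyadic subset of $\mathbf{Q}$, to which the averaging-operator estimate of Lemma \ref{lem:PseudoMetric1} applies.

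First I would set up a joint dyadic decomposition: for each integer $n\geq 1$, partition the cube $\mathbf{Q}=[-1,1]^d$ into subcubes of side $2^{-n}$ and, for each such subcube centered at $q$, consider the level set $A_q^n(u)=\{(\xi,\zeta)\in[0,1]^2 : u(\xi,\zeta)\text{ lies in that subcube}\}$. Using the componentwise monotonicity of $b$, on $A_q^n(u)$ each component of the integrand is squeezed between $b_i(\xi,\zeta,W_{\xi,\zeta}+q-2^{-n}\mathbf{1})-b_i(\xi,\zeta,W_{\xi,\zeta})$ and $b_i(\xi,\zeta,W_{\xi,\zeta}+q+2^{-n}\mathbf{1})-b_i(\xi,\zeta,W_{\xi,\zeta})$. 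The integral in \eqref{eq:IntPathByPath} thus decomposes, up to terms of order $2^{-n}$ per unit area, into a finite sum of pieces of the form $\int\!\!\int_{[0,s]\times[0,t]\cap A_q^n(u)}(b(\xi,\zeta,W_{\xi,\zeta}+q)-b(\xi,\zeta,W_{\xi,\zeta}))\,d\xi d\zeta$, in which the shift is constant.

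The next step is to apply Lemma \ref{lem:PseudoMetric1} to each such piece, which (since $b$ is bounded) provides an estimate roughly of the form $C(\omega)|q|^{\alpha}\lambda(A_q^n(u)\cap[0,s]\times[0,t])^{\beta}$ for suitable exponents $\alpha,\beta>0$ depending on the regularity of the averaging operator along the sheet. A Borel--Cantelli argument over the countable family of dyadic shifts $q\in\mathbf{Q}\cap 2^{-n}\mathbb{Z}^d$ and dyadic scales $n\geq 1$ produces a full-measure set $\Omega_1\subset\Omega$ on which the constant $C(\omega)$ is uniformly finite. Summing these estimates over the level sets and optimising the scale $n$ against $\|u\|_\infty$ restricted to $[0,s]\times[0,t]$, one obtains a pointwise inequality of the form
\begin{equation*}
|u(s,t)|\leq C(\omega)\int_0^s\int_0^t \varphi(|u(\xi,\zeta)|)\,d\xi d\zeta,\qquad (s,t)\in[0,1]^2,
\end{equation*}
with a nonnegative $\varphi$ satisfying the Osgood-type integrability condition required by Lemma \ref{lem:GronwallSheetd}. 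The conclusion $u\equiv 0$ then follows directly from that lemma applied on $\Omega_1$.

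The step I expect to be most delicate is combining the sandwich-by-monotonicity with the averaging-operator estimate to produce a control function $\varphi$ which, after the Borel--Cantelli summation over dyadic scales, both (i) captures the correct quantitative behaviour of the drift increments along the sheet and (ii) fails Osgood integrability at the origin, so that the planar Gronwall inequality yields uniqueness rather than merely a stability bound. Balancing the dyadic scale $n$ against $\|u\|_\infty$ in the Borel--Cantelli step, while accounting for the anisotropic regularity of $W$ in two parameters (reflected in the law-of-iterated-logarithm refinements used in the proof of Lemma \ref{lem:PseudoMetric1}), is the core technical challenge.
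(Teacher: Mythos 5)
Your proposal takes a genuinely different route from the paper, and I think it has two concrete gaps.

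First, you have the wrong mental model of Lemma \ref{lem:GronwallSheetd}. It is \emph{not} an Osgood-type planar Gronwall lemma that takes as input an integral inequality $|u(s,t)|\le C\int_0^s\int_0^t\varphi(|u|)\,\mathrm{d}\xi\,\mathrm{d}\zeta$ and outputs $u\equiv 0$. Its hypothesis is the \emph{original} integral equation \eqref{eq:DavieSheetGronwalld} together with a bound $\beta(n)$ on the boundary data; its conclusion is an explicit block-by-block sup bound
\[
\max\{|\oun(k,\kp)|,|\uun(k,\kp)|\}\le(3\sqrt d)^{k+\kp-1}\bigl(1+C_1(\omega)\sqrt{dn}\,2^{-n}\bigr)^{k+\kp}\beta(n),
\]
which is \emph{proportional to} $\beta(n)$. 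The actual proof of the theorem is then almost immediate: since the solution $u$ of \eqref{eq:IntPathByPath} has $u(s,0)=u(0,t)=0$, the hypothesis $\max\{|u(s,0)|,|u(0,t)|\}\le\beta(n)$ is satisfied for \emph{any} admissible $\beta(n)\in[2^{-4^{3n/4}},2^{-4^{2n/3}}]$; taking the sup over $k,\kp\le 2^n$ gives a bound $(4\sqrt d)^{2^{n+1}}\beta(n)$, which tends to $0$ as $n\to\infty$ because $\beta(n)$ is doubly exponentially small. So no Osgood condition, no optimisation of a control function $\varphi$, and no separate derivation of an integral inequality is needed; you just quote the lemma and let $n\to\infty$.

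Second, the level-set machinery you propose would not go through as stated. You want to apply Lemma \ref{lem:PseudoMetric1} to integrals over $A_q^n(u)\cap[0,s]\times[0,t]$, but that lemma only controls $\varrho_{nk\kp}(x,y)$, an integral over a full dyadic rectangle $I_{n,k}\times I_{n,\kp}$, and the $2^{-n}$ in its bound comes from the size of that rectangle, not from a Lebesgue measure of an arbitrary subset raised to a power; there is no version of Lemma \ref{lem:PseudoMetric1} over arbitrary Borel sets in the paper. The paper handles the $(\xi,\zeta)$-dependence of the shift $u(\xi,\zeta)$ differently and more cheaply: inside the proof of Lemma \ref{lem:GronwallSheetd}, componentwise monotonicity of $b$ is used to replace the pointwise shift $u(\xi,\zeta)$ on a block $I_{n,k}\times I_{n,\kp}$ by the constant shift $\oun(k+1,\kp+1)$ (respectively $-\uun(k+1,\kp+1)$), to which Lemma \ref{lem:PseudoMetric3} then applies exactly as stated. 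That block-constant replacement is the mechanism that makes the argument close; your level-set decomposition is a plausible alternative philosophy but would require a measure-refined analogue of Lemma \ref{lem:PseudoMetric1} that the paper does not provide, and the Borel--Cantelli bookkeeping you sketch would have to be redone from scratch.
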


The proof of Theorem \ref{theo:DavieSheetMonotoneD} is carried out in two main steps. In the first step, we use a two-parameter Wiener process to regularise \eqref{eq:IntPathByPath} on dyadic intervals. In the second step we show a Gronwall type lemma (see Lemma \ref{lem:GronwallSheetd}). The regularisation is as follows: For any positive integer $n$, we divide $[0,1]$ into $2^{n}$ intervals $I_{n,k}=]k2^{-n},(k+1)2^{-n}]$ and define $\varrho_{nk\kp}$ by
\begin{align*}
	\varrho_{nk\kp}(x,y):=\int_{I_{n,\kp}}\int_{I_{n,k}}\{b(s,t,W_{s,t}+x)-b(s,t,W_{s,t}+y)\}\,\mathrm{d}t\mathrm{d}s.
\end{align*}
The next three lemmas whose proofs are given in Section \ref{Auxrel}  provide an estimate for $\varrho_{nk\kp}(x,y)$ and $\varrho_{nk\kp}(0,x)$ for every dyadic numbers $x,y\in\mathbf{Q}$. Lemmas \ref{lem:PseudoMetric1} and \ref{lem:PseudoMetric2} are counterparts of Lemmas 3.1 and 3.2 in \cite{Da07} for the Brownian sheet. The proof of Lemma \ref{lem:PseudoMetric1} uses  the local time-space integration formula for the Brownian sheet as given in \cite{BDM21}. Lemma \ref{lem:PseudoMetric3} follows from Lemma \ref{lem:PseudoMetric2} using the fact that the set of dyadic numbers is dense in $\R$.
\begin{lemma}\label{lem:PseudoMetric1}
	Suppose $b:\,[0,1]^2\times\R^d\to\R$ is a Borel measurable function such that $|b(s,t,x)|\leq1$ everywhere on $[0,1]^2\times\R^d$. Then there exists a subset $\Omega_{1}$ of $\Omega$ with $\Pb(\Omega_{1})=1$ such that for all $\omega\in\Omega_1$,
	\begin{align*}
		|\varrho_{nk\kp}(x,y)(\omega)|\leq C_1(\omega)2^{-n}\Big[\sqrt{n}+\Big(\log^+\frac{1}{|x-y|}\Big)^{1/2}\Big]|x-y|\,\text{ on }\Omega_{1}
	\end{align*}
	for all dyadic numbers $x,\,y\in\mathbf{Q}$ and all choices of integers $n,\,k,\,\kp$ with $n\geq1$, $0\leq k,\kp\leq 2^n-1$, where $C_1(\omega)$ is a positive random constant that does not depend on $x$, $y$, $n$, $k$ and $\kp$.
\end{lemma}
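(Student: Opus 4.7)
My plan is to adapt the strategy of Lemma 3.1 in \cite{Da07} to the two-parameter setting. Two substantial new ingredients are required: Davie's one-dimensional Tanaka/local-time argument must be replaced by the local time--space integration formula for the Brownian sheet from \cite{BDM21}, and the Kolmogorov-type chaining must range jointly over the spatial variables $x,y\in\mathbf{Q}$ and the dyadic rectangles $I_{n,k}\times I_{n,\kp}$.

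First I would apply the local time--space integration formula of \cite{BDM21} to the difference $b(\cdot,\cdot,\cdot+x)-b(\cdot,\cdot,\cdot+y)$ on the rectangle $I_{n,k}\times I_{n,\kp}$. This rewrites $\varrho_{nk\kp}(x,y)$ as a sum of stochastic integrals against the sheet $W$ whose integrands incorporate the translates of $b$ but no longer contain $b$ composed with the non-smooth field $W_{s,t}$. This step is essential: it turns an integral of a merely measurable function evaluated along the sheet into an object whose moments can be controlled by Burkholder--Davis--Gundy together with Gaussian density estimates.

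Next, for each integer $p\geq 1$, I would derive a moment bound of the form
$$\E\bigl|\varrho_{nk\kp}(x,y)\bigr|^{2p} \leq C_p\,2^{-\alpha n p}\,|x-y|^{2p},$$
uniformly in $k,\kp$ and in dyadic $x,y\in\mathbf{Q}$, for some exponent $\alpha>2$ large enough to absorb the union-bound cost below. The $2p$-th power of $|x-y|$ is the standard gain extracted from the spatial Lipschitz regularity of the Gaussian density of the sheet, while the decay $2^{-\alpha n p}$ is inherited from the area $2^{-2n}$ of the rectangle combined with Gaussian decorrelation of $W$ across distant arguments. Near the axes $\{s=0\}\cup\{t=0\}$, where the sheet's covariance degenerates, I would fall back on the trivial bound $|\varrho_{nk\kp}(x,y)|\leq 2\cdot 2^{-2n}$ coming from $|b|\leq 1$ to dominate the bad region.

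Finally, I would apply Chebyshev's inequality and a union bound, for each fixed $n$, over the $2^{2n}$ rectangle-indices and over pairs $(x,y)\in\mathbf{Q}^2$ of dyadic numbers with denominators bounded by $2^N$; optimising $p$ in terms of $n$ and $N$ and summing in $n$, Borel--Cantelli produces a random constant $C_1(\omega)$ for which the claimed inequality holds simultaneously for all $n,k,\kp$ and all dyadic $x,y$. The $\sqrt{n}$ factor in the bound emerges from the Borel--Cantelli cost and the union over rectangles at scale $2^{-n}$, while the $(\log^+(1/|x-y|))^{1/2}$ factor emerges from the union over dyadic pairs separated by $|x-y|$. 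I expect the main obstacle to be the moment estimate: obtaining the sharp $|x-y|^{2p}$ dependence with a large enough exponent in $2^{-\alpha n p}$, \emph{uniformly in $p$}, requires combining the Gaussian density of $W_{s,t}$ on a single rectangle, the decorrelation of the sheet between disjoint rectangles, and a careful treatment of the axis degeneracy; the representation afforded by \cite{BDM21} is precisely what makes this estimate tractable.
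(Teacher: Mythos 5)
Your outline shares the paper's two main ideas---use a local time--space integration formula to desingularise the occupation functional, then run a union bound over dyadic rectangles and spatial pairs followed by Borel--Cantelli and chaining---so the skeleton is right. There are, however, two genuine problems with the middle step as you have stated it, and one smaller misconception.

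First, the moment exponent $\alpha>2$ in your proposed bound
$\E|\varrho_{nk\kp}(x,y)|^{2p}\le C_p\,2^{-\alpha np}|x-y|^{2p}$ is not achievable and, worse, is not what is needed. The rectangle $I_{n,k}\times I_{n,\kp}$ has side lengths $\ve=\vep=2^{-n}$; after applying the local time--space formula to the rescaled sheet $Y_{s,t}=(\ve\vep)^{-1/2}\widetilde W^{\ve,\vep}_{s,t}$ one finds that the quadratic variation of the resulting stochastic integrals is of order $\ve\vep|x-y|^2=2^{-2n}|x-y|^2$, so the natural concentration scale is $2^{-n}|x-y|$, i.e.\ $\alpha=2$ exactly. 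The union-bound cost is absorbed \emph{not} by a larger power of $2^{-n}$ but by the Gaussian tail in the deviation parameter. Concretely, Proposition \ref{prop:DavieSheet1dd} and Corollary \ref{corol:DavieSheet1dds1} in the paper establish, via Barlow--Yor applied after the \cite{BDM21} representation, the exponential moment bound
$\E\bigl[\exp\bigl(\alpha|\varrho(x,y)|^2/(\ve\vep|x-y|^2)\bigr)\bigr]\le C$,
hence
$\Pb\bigl(|\varrho_{nk\kp}(x,y)|\ge\eta\,2^{-n}|x-y|\bigr)\le Ce^{-\alpha\eta^2}$.
If you insist on working with polynomial moments you must track the growth $C_p\lesssim C^p p^p$ (sub-Gaussian moment growth) and then optimise $p\sim n+\log^+(1/|x-y|)$ in Chebyshev; simply asking for an inequality ``uniformly in $p$'' with fixed $\alpha>2$ will not produce the $\sqrt n$ and $(\log^+(1/|x-y|))^{1/2}$ factors and will not survive the summation. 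This sub-Gaussian concentration is the key ingredient your outline does not make explicit.

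Second, the claimed source of decay is misattributed: there is no ``Gaussian decorrelation across distant arguments'' at play, since the estimate is made on a single rectangle $I_{n,k}\times I_{n,\kp}$. The gain comes solely from the martingale/BDG structure that the local time representation produces, not from decorrelation between rectangles. Relatedly, the special treatment you propose near the axes is unnecessary: the rescaling $\widetilde W^{\ve,\vep}$ is a standard Brownian sheet on $[0,1]^2$ regardless of the position $(\pa,a)$, so the formula of \cite{BDM21} applies uniformly (the $1/\sqrt s$ weights it produces are integrable and handled by Barlow--Yor); no fallback to the trivial bound $2\cdot2^{-2n}$ is needed or used. Finally, once the Gaussian tail is in hand, the chaining must be done at a pair-dependent level $m$ with $2^{-m}\approx\max_i|x_i-y_i|$ and a telescoping along dyadic approximations $x_r,y_r$ at levels $r\ge m$; this is what converts the $\sqrt m$ from the union bound into the factor $(\log^+(1/|x-y|))^{1/2}$. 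You gesture at this but it should be made precise, because it is where the stated form of the bound is actually assembled.
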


\begin{lemma}\label{lem:PseudoMetric2}
	Suppose $b$ is as in Lemma \ref{lem:PseudoMetric1}. Then there exists a subset $\Omega_{2}$ of $\Omega$ with $\Pb(\Omega_{2})=1$  such that for all $\omega\in\Omega_2$, for any choice of $n,\,k,\,\kp$, and any choice of a dyadic number $x\in\mathbf{Q}$
	\begin{align}\label{eq:coPseudoMetric2}
		\left|\varrho_{nk\kp}(0,x)(\omega)\right|\leq C_{2}(\omega)\sqrt{n}2^{-n}\Big(|x|+2^{-4^{n}}\Big),
	\end{align}
	where $C_2(\omega)$ is a positive random constant that does not depend on $x$, $n$, $k$ and $\kp$.
\end{lemma}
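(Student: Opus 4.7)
The strategy mirrors that of Lemma~\ref{lem:PseudoMetric1}: I plan to derive a sharp moment bound on $\varrho_{nk\kp}(0,x)$ for each fixed choice of parameters, then convert it into an almost-sure estimate by Chebyshev's inequality and a Borel--Cantelli argument over a countable family. The essential new point, and the reason why the bound \eqref{eq:coPseudoMetric2} is free of the $\sqrt{\log^+(1/|x|)}$ factor present in Lemma~\ref{lem:PseudoMetric1}, is that here one of the two arguments of $\varrho_{nk\kp}$ is fixed at the origin, which allows genuine cancellation between $b(\cdot,W_{\cdot,\cdot})$ and $b(\cdot,W_{\cdot,\cdot}+x)$ to be exploited.

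Concretely, the first step is to prove a moment estimate of the form
\begin{equation*}
\E\bigl|\varrho_{nk\kp}(0,x)\bigr|^{2p}\leq \bigl(C\,2^{-n}|x|\bigr)^{2p},
\end{equation*}
for every integer $p\geq 1$, every triple $(n,k,\kp)$ and every dyadic $x\in\mathbf{Q}$, with $C>0$ independent of all parameters. I would obtain it by expressing $\varrho_{nk\kp}(0,x)$ via the local time--space integration formula for the Brownian sheet from \cite{BDM21} already used in the proof of Lemma~\ref{lem:PseudoMetric1}, and then computing iterated conditional moments via the commutation property of conditional expectations with respect to $(\mathcal{F}_{s,t})$, combined with Gaussian density estimates for $W_{s,t}$. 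Chebyshev's inequality then gives $\Pb(|\varrho_{nk\kp}(0,x)|>\lambda)\leq (C\,2^{-n}|x|/\lambda)^{2p}$; choosing $\lambda:=K\sqrt{n}\,2^{-n}(|x|+2^{-4^n})$ and $p:=4^n$ produces a bound of order $(C/(K\sqrt{n}))^{2\cdot 4^n}$ for any $K>C$, i.e., doubly-exponentially small in $n$. Summing over $(n,k,\kp,x)$ with $0\leq k,\kp\leq 2^n-1$ and with $x$ ranging over the dyadic points of $\mathbf{Q}$ of precision at most $4^n$ (there are at most $(2^{4^n+1}+1)^d$ of these for each $n$) yields a series whose general term is of order $2^{2n}\cdot (2^dC^2/(K^2n))^{4^n}$, which is summable for $K$ large enough; Borel--Cantelli then supplies a full-measure event on which the required bound holds for all sufficiently large $n$, the remaining finitely many $n$ being absorbed into the random constant $C_2(\omega)$.

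Dyadic $x$ of precision $>4^n$ (so $|x|$ possibly much smaller than $2^{-4^n}$) are handled by writing $x=x_0+\delta$ with $x_0$ dyadic of precision $4^n$ and $|\delta|\leq 2^{-4^n}$, splitting $\varrho_{nk\kp}(0,x)=\varrho_{nk\kp}(0,x_0)+\varrho_{nk\kp}(x_0,x_0+\delta)$, and applying the same moment-bound/Chebyshev/Borel--Cantelli strategy to each of the two pieces; the target for the second piece is of order $\sqrt{n}\,2^{-n}\cdot 2^{-4^n}$, which fits within the $2^{-4^n}$ margin in \eqref{eq:coPseudoMetric2}. The main obstacle throughout is the sharp moment bound in the first step: eliminating the logarithmic factor in $|x|$ that otherwise appears (as in the analogous estimate behind Lemma~\ref{lem:PseudoMetric1}) requires a careful two-parameter analogue of the one-dimensional moment identity of Davie~\cite{Da07}, exploiting both the pointwise Gaussian structure of the sheet and the commutation of conditional expectations over the plane, and is where the two-parameter geometry is most delicate.
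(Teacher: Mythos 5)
Your plan hinges on the moment estimate
\[
\E\bigl|\varrho_{nk\kp}(0,x)\bigr|^{2p}\le\bigl(C\,2^{-n}|x|\bigr)^{2p}
\qquad\text{for all }p\ge1,
\]
with $C$ independent of $p$, and this is where the argument breaks. Such a bound, holding for every $p$, would force $|\varrho_{nk\kp}(0,x)|\le C\,2^{-n}|x|$ almost surely with a \emph{deterministic} constant, eliminating both the $\sqrt n$ factor and the $2^{-4^n}$ margin in \eqref{eq:coPseudoMetric2} — a far stronger statement than the lemma itself, and one that is false. What the machinery of the paper actually produces (Proposition \ref{prop:DavieSheet1dd} and Corollary \ref{corol:DavieSheet1dds1}, via the local time--space formula and Barlow--Yor) is the exponential estimate $\E\bigl[\exp\bigl(\alpha|\varrho_{nk\kp}(0,x)|^2/(4^{-n}|x|^2)\bigr)\bigr]\le C$, i.e.\ sub-Gaussian tails. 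Translating this into moments gives $\E|\varrho_{nk\kp}(0,x)|^{2p}\lesssim p!\,\alpha^{-p}\bigl(2^{-n}|x|\bigr)^{2p}$, so there is an unavoidable $\sqrt p$ factor: $\E|\varrho|^{2p}\lesssim\bigl(C\sqrt p\,2^{-n}|x|\bigr)^{2p}$. Feed that into your Chebyshev step with $\lambda=K\sqrt n\,2^{-n}(|x|+2^{-4^n})$ and $p=4^n$: the ratio becomes $\bigl(C\sqrt p/(K\sqrt n)\bigr)^{2p}=\bigl(C\,2^n/(K\sqrt n)\bigr)^{2\cdot4^n}$, which blows up instead of being doubly-exponentially small, and the union bound over the $\sim2^{d\cdot4^n}$ dyadic points of precision $4^n$ cannot be paid for. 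The split $x=x_0+\delta$ for finer precision faces the same obstruction.

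The correct fix, and what the paper does, is to avoid the large union bound altogether by chaining over a \emph{sparse} family. The proof introduces nested sets $\mathcal{D}_{q,r}$ of pairs of dyadic points that lie in $\mathcal{O}_q$ (coordinates $\le2^{-q}$) and have precision $r\ge q$; there are only $O(2^{2d(r-q)})$ such pairs, so the sub-Gaussian tail bound $\Pb\bigl(|\varrho_{nk\kp}(y,z)|\ge\delta(\sqrt n+\sqrt{r-q})\,2^{-n}|y-z|\bigr)\le Ce^{-\alpha\delta^2(n+r-q)}$ summed over $n,k,\kp,q\le2^{2n+1},r\ge q$ and $(y,z)\in\mathcal{D}_{q,r}$ gives a convergent series. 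The point fixed at $0$ then allows one to start the chain from the level $q\sim\log(1/|x|)$ where $x$ lives, and the chaining factor is $\sqrt{r-q}$ rather than $\sqrt r$; telescoping produces $\sqrt n$ with no extra $\sqrt{\log(1/|x|)}$. This structural reason — chaining depth measured relative to $q$ — is the mechanism that removes the logarithm, not a sharper moment identity. The regime $\max_i|x_i|<2^{-2^{2n+1}}$ is dispatched separately by invoking Lemma \ref{lem:PseudoMetric1} directly: there $|x|^{1/2}\le d^{1/4}2^{-4^n}$, so $(\log^+(1/|x|))^{1/2}|x|=(|x|\log^+(1/|x|))^{1/2}|x|^{1/2}\le\sqrt{C_0}\,d^{1/4}2^{-4^n}$ falls within the $2^{-4^n}$ margin without any new probabilistic input. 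You should replace the Chebyshev-over-a-huge-grid scheme by this chaining argument, or at least recognize that the uniform-in-$p$ moment bound is unobtainable and rework the quantitative step accordingly.
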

Observe that the proofs of the above two results do not require the monotonic argument on the drift  $b$.
\begin{lemma}\label{lem:PseudoMetric3}
	Suppose $b$ is as in Theorem \ref{maintheuniq2}. Let $\Omega_2$ be a subset of $\Omega$ such that, for any $\omega\in\Omega_2$, \eqref{eq:coPseudoMetric2} holds for every $n,\,k,\,\kp$, and every dyadic number $x\in\mathbf{Q}$. Then 
	\begin{align*} 
		\left|\varrho_{nk\kp}(0,x)(\omega)\right|\leq \widetilde{C}_{2}(\omega)\sqrt{n}2^{-n}\Big(|x|+2^{-4^{n}}\Big)
	\end{align*}
	for any $\omega\in\Omega_2$, any $n$, $k$, $\kp$, and any $x\in\mathbf{Q}$,  where $\widetilde{C}_2(\omega)$ is a positive random constant that does not depend on $x$, $n$, $k$ and $\kp$.
\end{lemma}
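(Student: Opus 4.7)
The goal is to extend the bound of Lemma \ref{lem:PseudoMetric2} from dyadic $x\in\mathbf{Q}$ to arbitrary $x\in\mathbf{Q}=[-1,1]^d$. The new ingredient, which was not needed in Lemmas \ref{lem:PseudoMetric1} and \ref{lem:PseudoMetric2}, is the componentwise monotonicity of $b$ assumed in Hypothesis \ref{hyp1}. It allows us to sandwich each component of $\varrho_{nk\kp}(0,x)$ between its values at two dyadic vectors that approximate $x$ and conclude by density.

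\textbf{Step 1: monotonicity of $x\mapsto\varrho_{nk\kp}(0,x)$.} Writing componentwise,
$$
\varrho_{nk\kp}^{(i)}(0,x)(\omega)=\int_{I_{n,\kp}}\int_{I_{n,k}}\bigl\{b_i(s,t,W_{s,t}(\omega))-b_i(s,t,W_{s,t}(\omega)+x)\bigr\}\,\mathrm{d}t\,\mathrm{d}s,
$$
the assumption that each $b_i$ is componentwise nondecreasing in its spatial argument implies that, for $x\preceq y$ in $\R^d$, $b_i(s,t,W_{s,t}+x)\leq b_i(s,t,W_{s,t}+y)$ pointwise. Integrating yields $\varrho_{nk\kp}^{(i)}(0,y)(\omega)\leq\varrho_{nk\kp}^{(i)}(0,x)(\omega)$, so each map $x\mapsto\varrho_{nk\kp}^{(i)}(0,x)(\omega)$ is componentwise nonincreasing on $\R^d$.

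\textbf{Step 2: dyadic sandwich and conclusion.} Fix $x=(x_1,\ldots,x_d)\in\mathbf{Q}$ and $n\geq 1$, and set $N=4^n$. Define the two dyadic approximations
$$
x^-:=\Bigl(\lfloor 2^N x_i\rfloor\,2^{-N}\Bigr)_{i=1}^d,\qquad x^+:=\Bigl(\lceil 2^N x_i\rceil\,2^{-N}\Bigr)_{i=1}^d.
$$
Since $x\in[-1,1]^d$, both $x^-$ and $x^+$ lie in $\mathbf{Q}$ and are dyadic; moreover $x^-\preceq x\preceq x^+$ and $|x^{\pm}-x|\leq\sqrt{d}\,2^{-4^n}$. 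By Step 1,
$$
\varrho_{nk\kp}^{(i)}(0,x^+)(\omega)\leq\varrho_{nk\kp}^{(i)}(0,x)(\omega)\leq\varrho_{nk\kp}^{(i)}(0,x^-)(\omega),
$$
so $|\varrho_{nk\kp}^{(i)}(0,x)(\omega)|\leq\max\bigl(|\varrho_{nk\kp}^{(i)}(0,x^+)(\omega)|,|\varrho_{nk\kp}^{(i)}(0,x^-)(\omega)|\bigr)$. Applying the dyadic hypothesis to $x^\pm\in\mathbf{Q}$ and using $|x^\pm|\leq|x|+\sqrt{d}\,2^{-4^n}$, each component is bounded by
$$
C_2(\omega)\sqrt{n}\,2^{-n}\bigl(|x|+(\sqrt{d}+1)2^{-4^n}\bigr)\leq(\sqrt{d}+1)C_2(\omega)\sqrt{n}\,2^{-n}\bigl(|x|+2^{-4^n}\bigr).
$$
Taking the Euclidean norm over components produces the claim with $\widetilde C_2(\omega):=\sqrt{d}(\sqrt{d}+1)C_2(\omega)$, which does not depend on $x$, $n$, $k$ or $\kp$.

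I do not anticipate any substantive obstacle: the argument is a short density/sandwich combined with the monotonicity of $b$. The only point requiring care is to verify that $x^\pm$ genuinely belong to $\mathbf{Q}$ so that the dyadic hypothesis applies, which is immediate from $x\in[-1,1]^d$ together with the floor/ceiling construction.
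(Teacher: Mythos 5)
Your proof is correct. The core idea---use the componentwise monotonicity of $b$ (hence the componentwise monotonicity of $x\mapsto\varrho_{nk\kp}^{(i)}(0,x)$) to sandwich $\varrho_{nk\kp}^{(i)}(0,x)$ between its values at dyadic approximations, then invoke the dyadic bound---is also what the paper uses, but your implementation is genuinely different. The paper decomposes $x=x^+-x^-$ into positive and negative parts, constructs for each a componentwise \emph{nonincreasing} sequence of dyadics $x^{\pm}_r$ converging to $x^{\pm}$ from above, bounds $\varrho_{nk\kp}^{\pm}(x,0)$ via monotonicity by $\pm\varrho_{nk\kp}(\pm x^{\pm}_r,0)$, and then passes to the limit $r\to\infty$. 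Your argument instead picks a \emph{single} pair of dyadic approximants, the componentwise floor and ceiling at the fixed resolution $2^{-4^n}$, observes that the resulting error $\sqrt d\,2^{-4^n}$ is exactly of the order of the $2^{-4^n}$ slack already present on the right-hand side of \eqref{eq:coPseudoMetric2}, and so concludes directly with no limiting step. This is cleaner and makes transparent why the $2^{-4^n}$ term is there at all. Two small points worth retaining: the verification that $x^{\pm}\in[-1,1]^d$ is needed (the floor/ceiling at resolution $2^{-4^n}$ of a point in $[-1,1]$ stays in $[-1,1]$ because $\pm1$ are themselves dyadic at every resolution), and the passage from the vector bound on $|\varrho_{nk\kp}(0,x^{\pm})|$ to each component $|\varrho_{nk\kp}^{(i)}(0,x^{\pm})|$ is just the triviality that the Euclidean norm dominates each coordinate. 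Both are handled correctly.
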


 The subsequent result is a Gronwall type lemma and constitutes the main result in the second step of the proof of Theorem \ref{theo:DavieSheetMonotoneD}. Its proof is found in Section 5.

\begin{lemma}\label{lem:GronwallSheetd}
	Let $W:=\Big(W^{(1)}_{s,t},\ldots,W^{(d)}_{s,t};(s,t)\in[0,1]^2\Big)$ be a $d$-dimensional Brownian sheet defined on a filtered probability space $(\Omega,\mathcal{F},\mathbb{F},\Pb)$, where $\mathbb{F}=(\mathcal{F}_{s,t};(s,t)\in[0,1]^2)$ and let
	 the drift $b$ be as in Theorem \ref{maintheuniq2}. There exists $\Omega_1\subset\Omega$ with $\Pb(\Omega_1)=1$ and a positive random constant $C_1$ such that for any $\omega\in\Omega_1$, any sufficiently large positive integer $n$, any $(k,\kp)\in\{1,2,\ldots,2^n\}^2$, any $\beta(n)\in\Big[2^{-4^{3n/4}},2^{-4^{2n/3}}\Big]$, and any solution   
	$u$ to the system of integral equations
	\begin{align}\label{eq:DavieSheetGronwalld}
		&u_i(s,t)-u_i(s,0)-u_i(0,t)+u_i(0,0)\nonumber\\
		&=\int_0^t\int_0^s\{b_i(\xi,\zeta,W_{\xi,\zeta}(\omega)+u(\xi,\zeta))-b_i(\xi,\zeta,W_{\xi,\zeta}(\omega))\}\mathrm{d}\xi\mathrm{d}\zeta, 
		\text{ }\forall\,(s,t)\in[0,1]^2,\,\forall\,i,
	\end{align}
satisfying
\begin{equation}\label{eq:GronwallInitialVd}
	\max\{|u(s,0)|,|u(0,t)|\}\leq\beta(n),\quad\forall\,(s,t)\in[0,1]^2,
\end{equation}
we have
\begin{align}\label{eq:GronwallSheetEstd}
	\max\{|\oun(k,\kp)|,|\uun(k,\kp)|\}\leq(3\sqrt{d})^{k+\kp-1}\Big(1+C_1(\omega)\sqrt{dn}2^{-n}\Big)^{k+\kp}\beta(n)\,\text{ on }\Omega_1,
\end{align}
	where $\oun(k,\kp)=(\oun_1(k,\kp),\ldots,\oun_d(k,\kp))$, $\uun(k,\kp)=(\uun_1(k,\kp),\ldots,\uun_{\,d}(k,\kp))$,
	\begin{align*}
		\oun_i(k,\kp):=\sup\limits_{(s,t)\in I_{n,k-1}\times I_{n,\kp-1}}\max\{0,u_i(s,t)\}=\sup\limits_{(s,t)\in I_{n,k-1}\times I_{n,\kp-1}}u_i^+(s,t),\quad\forall\,i
	\end{align*}
	and
	\begin{align*}
		\uun_i(k,\kp):=\sup\limits_{(s,t)\in I_{n,k-1}\times I_{n,\kp-1}}\max\{0,-u_i(s,t)\}=\sup\limits_{(s,t)\in I_{n,k-1}\times I_{n,\kp-1}}u_i^-(s,t),\quad\forall\,i.
	\end{align*}  

\end{lemma}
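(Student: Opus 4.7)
The strategy is to fix $\omega$ in the full-probability set $\Omega_2$ of Lemma~\ref{lem:PseudoMetric3} (which will play the role of $\Omega_1$), introduce
$N(k,\kp):=\max\{|\oun(k,\kp)|,|\uun(k,\kp)|\},$
derive a cell-by-cell recursion for $N$, and close it by strong induction on $m=k+\kp$. The starting point, obtained by evaluating \eqref{eq:DavieSheetGronwalld} at the four corners of the cell $I_{n,k-1}\times I_{n,\kp-1}$ (with $s_0:=(k-1)2^{-n}$, $t_0:=(\kp-1)2^{-n}$) and subtracting, is the local rectangle identity
\[
u_i(s,t)-u_i(s,t_0)-u_i(s_0,t)+u_i(s_0,t_0)=\int_{s_0}^{s}\int_{t_0}^{t}g_i(\xi,\zeta)\mathrm{d}\zeta\mathrm{d}\xi,
\]
with $g_i(\xi,\zeta):=b_i(\xi,\zeta,W_{\xi,\zeta}(\omega)+u(\xi,\zeta))-b_i(\xi,\zeta,W_{\xi,\zeta}(\omega))$. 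This localises the estimation of $u_i$ to one cell together with its three ``south--west'' corner values.

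The componentwise monotonicity of $b$ gives $g_i(\xi,\zeta)\leq b_i(\xi,\zeta,W+\oun(k,\kp))-b_i(\xi,\zeta,W)$ on the cell (as $u(\xi,\zeta)\preceq\oun(k,\kp)$ componentwise), and this bound is nonnegative since $\oun(k,\kp)\succeq 0$, so the partial integration domain $[s_0,s]\times[t_0,t]$ may be extended to all of $I_{n,k-1}\times I_{n,\kp-1}$, yielding $\varrho^{(i)}_{n,k-1,\kp-1}(\oun(k,\kp),0)$; by Lemma~\ref{lem:PseudoMetric3} this is bounded by $\widetilde{C}_2\sqrt{n}2^{-n}(|\oun(k,\kp)|+2^{-4^n})$, with a symmetric lower estimate via $-\uun(k,\kp)$. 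Bounding the three corner terms by $N(k-1,\kp),N(k,\kp-1),N(k-1,\kp-1)$ (with the convention $N(k,0)=N(0,\kp):=\beta(n)$ coming from \eqref{eq:GronwallInitialVd}), taking maximum over $i$ and supremum over $(s,t)$, and applying $|\oun(k,\kp)|,|\uun(k,\kp)|\leq\sqrt{d}\max_i\max\{\oun_i(k,\kp),\uun_i(k,\kp)\}$, I arrive at the master recursion
\[
N(k,\kp)\leq \sqrt{d}B\bigl[N(k-1,\kp)+N(k,\kp-1)+N(k-1,\kp-1)\bigr]+2\gamma\cdot 2^{-4^n},
\]
where $\gamma:=\sqrt{d}\widetilde{C}_2(\omega)\sqrt{n}2^{-n}$ and $B:=1+2\gamma$, valid for $n$ large enough that $\gamma<1/2$.

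The conclusion then follows by strong induction on $m=k+\kp$ applied to the claim $N(k,\kp)\leq A^{m-1}B^m\beta(n)$ with $A:=3\sqrt{d}$. In the inductive step, for $k,\kp\geq 2$ the IH bounds the neighbour-sum by $(2AB+1)A^{m-3}B^{m-2}\beta(n)\leq 3A^{m-2}B^{m-1}\beta(n)$ (since $AB\geq 3$); multiplying by $\sqrt{d}B$ yields exactly $A^{m-1}B^m\beta(n)$, and the residual $2\gamma\cdot 2^{-4^n}$ is absorbed by enlarging $C_1$ slightly (any $C_1>2\widetilde{C}_2$ creates enough slack, since $2^{-4^n}\ll\beta(n)\geq 2^{-4^{3n/4}}$). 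The boundary cases $k=1$ or $\kp=1$ are treated identically by substituting $\beta(n)$ for the boundary $N$'s and using $A^{m-2}B^{m-1}\geq 1$. The main obstacle is the delicate bookkeeping: the componentwise monotonicity must be applied symmetrically to produce matching two-sided $\varrho$-estimates, and the partial-to-full rectangle extension crucially exploits the sign-definiteness of the extended integrand. A secondary technical point is that Lemma~\ref{lem:PseudoMetric3} requires $\oun(k,\kp),-\uun(k,\kp)\in\mathbf{Q}=[-1,1]^d$; this is preserved inductively because $A^{m-1}B^m\beta(n)\leq (3\sqrt{d})^{2^{n+1}}\beta(n)\to 0$ as $n\to\infty$ on the range $(k,\kp)\in\{1,\ldots,2^n\}^2$ by virtue of the upper restriction $\beta(n)\leq 2^{-4^{2n/3}}$, which is precisely the role of that side of the hypothesis.
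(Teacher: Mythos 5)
Your argument follows the same strategy as the paper: use the monotonicity of $b$ to pass from the partial rectangle $[s_0,s]\times[t_0,t]$ to the full cell via the sign-definiteness of the dominating integrand, invoke Lemma~\ref{lem:PseudoMetric3} to control the resulting $\varrho$ term, and close by induction. The differences are organizational: the paper proves a separate Claim for the boundary cells $(k,1)$, $(1,k)$ by one-dimensional induction and then runs a double induction on $(k,\kp)$, whereas you unify $|\oun|$ and $|\uun|$ into a single quantity $N(k,\kp)$ and do a strong induction on $m=k+\kp$ with the convention $N(k,0)=N(0,\kp)=\beta(n)$. You also take the vector norm less sharply: the paper applies Minkowski to the componentwise inequality and gets the $\sqrt d$ only on the $\varrho$ term, giving a recursion with no $\sqrt d$ in front of the bracket, while your bound $|\oun|\le\sqrt d\max_i\oun_i$ puts a $\sqrt d$ on everything. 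This is lossier, but the base $3\sqrt d$ is large enough that your induction still closes, as you indicate.

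One place your write-up is imprecise is the treatment of the residual. ``Absorbing $2\gamma\cdot 2^{-4^n}$ by enlarging $C_1$'' cannot be invoked mid-induction, because the target $A^{m-1}B^m\beta(n)$ itself depends on $C_1$ through $B$; you must fix $C_1$ before the induction starts. The correct mechanism is the one the paper uses: replace $2^{-4^n}$ by $\beta(n)$ (permissible since $\beta(n)\geq 2^{-4^{3n/4}}\geq 2^{-4^n}$), so the residual becomes $B\gamma\beta(n)$, and then observe that your bound $2AB+1\le 3AB$ has slack $AB-1\geq 2$, leaving room of order $\sqrt d A^{m-3}B^{m-1}\beta(n)\geq\beta(n)$ (for $m\geq 3$), which dominates $B\gamma\beta(n)<\beta(n)$. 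With this fix, and with the boundary cases $m\leq 3$ checked by hand as in the paper's Claim, the induction is airtight.

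Finally, you rightly flag that Lemma~\ref{lem:PseudoMetric3} is stated for arguments in $\mathbf Q=[-1,1]^d$, so one needs $\oun(k,\kp)\in\mathbf Q$ before applying it, which is a priori exactly what the induction is supposed to establish. The paper silently assumes this. A clean resolution is a continuity/stopping argument (on the first cell where $\sup|u|$ exceeds $1$, the south-west data is still small and the recursion forces a contradiction for $n$ large), which your ``preserved inductively'' sentence gestures at but does not spell out. This is a genuine subtlety that deserves a line of justification in any careful write-up, yours or the paper's.
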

We are now ready to prove Theorem \ref{theo:DavieSheetMonotoneD}.

\begin{proof}[Proof of Theorem \ref{theo:DavieSheetMonotoneD}]
	We choose $\Omega_1$, $C_1$, $\omega$, $n$ and $\beta(n)$ as in Lemma \ref{lem:GronwallSheetd}. Let $u$ be a solution to \eqref{eq:IntPathByPath}. We have $\max\{|u|(s,0),|u|(0,t)\}=0\leq\beta(n)\text{ for all }(s,t)\in[0,1]^2.$ Moreover, we deduce from (\ref{eq:GronwallSheetEstd}) that
	\begin{align}\label{eq:GrowllUniqBoundd}
		\sup\limits_{k,\kp\in\{0,1,2,\cdots,2^n\}}\max\left\{|\oun(k,\kp)|,|\uun(k,\kp)|\right\}\leq (4\sqrt{d})^{2^{n+1}}\beta(n)
	\end{align}
	for all $n$ satisfying $C_1(\omega)\sqrt{dn}2^{-n}\leq1/3$. Since the right side of \eqref{eq:GrowllUniqBoundd} converges to $0$ as $n$ goes to $\infty$, it holds $u(s,t)=0$ on $\Omega_1$ for all $(s,t)$.
\end{proof}

\begin{proof}[Proof of Theorem \ref{maintheuniq1}]
	For every positive integer $n$, consider the bounded and nondecreasing function $g_n:\,\R\to\R$ defined by
	$$g_n(a)=
	\begin{cases}
		a \text{ for }|a|<n,\\
		n \text{ for } a\geq n,\\
		-n \text{ for } a\leq - n.
	\end{cases}
	$$
	Then for every $n$ and $i$, $g_n\circ b_i$ is a bounded and nondecreasing function.
	Let $b^{(n)}:\,[0,1]^2\times\R^d\to\R^d$ be the bounded and componentwise nondecreasing function given by $b^{(n)}_i(s,t,x)=g_n(b_i(s,t,x))$ for all $i$, all $(s,t)\in[0,1]^2$ and all $x\in\R^d$. We have 
	$$|b^{(n)}(s,t,x)|\leq M(1+|x|),\quad\forall\,n,\,(s,t)\in[0,1]^2,\,x\in\R^d.$$  
	It follows from Theorem \ref{maintheuniq2} that for any $n$, there exists
	an event $\Omega_n$ of full measure, such that for any $\omega\in\Omega_n$, the system of integral equations
	\begin{align}\label{eq:syst1}
		X_{s,t}(\omega)=\int_0^t\int_0^sb^{(n)}(\xi,\zeta,X_{\xi,\zeta}(\omega))\mathrm{d}\xi \mathrm{d}\zeta+W_{s,t}(\omega),\quad(s,t)\in[0,1],
	\end{align}
	has a unique solution $(X_{s,t}^{(n)}(\omega),0\leq s,t\leq1)$. Moreover, we have
	\begin{align}\label{eq:abssyst1}
		\Big|X_{s,t}^{(n)}(\omega)\Big|\leq M\int_0^t\int_0^s\Big|X_{\xi,\zeta}^{(n)}(\omega)\Big|\mathrm{d}\xi \mathrm{d}\zeta+M+|W_{s,t}(\omega)|.
	\end{align}	
	 By the Gronwall inequality for integrals in the plane provided in \cite[Section 2]{Sno72} (see also \cite[Section 1]{Ra76}), it holds
		\begin{align}\label{ineq:Snow1}
			\Big|X_{s,t}^{(n)}(\omega)\Big|\leq M+|W_{s,t}(\omega)|+M\int_0^t\int_0^s\Big(M+|W_{\xi,\zeta}(\omega)|\Big)h(\xi,\zeta,s,t)\mathrm{d}\xi \mathrm{d}\zeta
		\end{align}
		for all $(s,t)\in[0,1]^2$, where $h$ is the unique solution to
		\begin{align}\label{eq:Snow1D}
			h(\xi,\zeta,s,t)=1+M\int_{\xi}^t\int_{\zeta}^sh(\eta,\gamma,s,t)\mathrm{d}\eta \mathrm{d}\gamma,\quad(\xi,\gamma)\in[0,s]\times[0,t].
		\end{align}
		It is known (see for example \cite[Page 145]{Nu06}) that for every $(\xi,\zeta,s,t)$, $h(\xi,\zeta,s,t)=I_0\Big(2\sqrt{M(s-\xi)(t-\zeta)}\Big)$, where $I_0$ is the modified Bessel function of order zero. 
		Since $h$ is nonnegative, we have
		\begin{align*}
			\sup\limits_{(s,t)\in[0,1]^2}\Big|X^{(n)}_{s,t}(\omega)\Big|\leq C_1^{\ast}\Big(M+\sup\limits_{(s,t)\in[0,1]^2}|W_{s,t}(\omega)|\Big),
		\end{align*}
		with
		\begin{align*}
			C_1^{\ast}=1+MI_{0}(2\sqrt{M}).
		\end{align*}
	Let $\Omega_{\infty}=\bigcap\limits_{n\geq1}\Omega_n$, then $\Pb(\Omega_{\infty})=1$. Fix $\omega\in\Omega_{\infty}$ and $n\geq1$ such that
	\begin{align}\label{eq:SolutionBound}
		(C_1^{\ast})^2M\Big(1+\sup\limits_{(s,t)\in[0,1]^2}|W_{s,t}(\omega)|\Big)\leq n.
	\end{align}
	Since $C_1^{\ast}\geq1+M$, we obtain
	\begin{align*}
		\sup\limits_{(s,t)\in[0,1]^2}\Big|b(s,t,X^{(n)}_{s,t}(\omega))\Big|&\leq M\Big(1+\sup\limits_{(s,t)\in[0,1]^2}\Big|X^{(n)}_{s,t}(\omega)\Big|\Big)\\
		&\leq M\Big(1+C_1^{\ast}M+C_1^{\ast}\sup\limits_{(s,t)\in[0,1]^2}|W_{s,t}(\omega)|\Big)\leq n.
	\end{align*}
	As a consequence, $b^{(n)}(s,t,X^{(n)}_{s,t}(\omega))=b(s,t,X^{(n)}_{s,t}(\omega))$ for every $(s,t)\in[0,1]^2$. Hence $(X^{(n)}_{s,t}(\omega),0\leq s,t\leq1)$ is a solution to the system
	\begin{align}\label{eq:syst2}
		X_{s,t}(\omega)=\int_0^t\int_0^sb(\xi,\zeta,X_{\xi,\zeta}(\omega))\mathrm{d}\xi \mathrm{d}\zeta+W_{s,t}(\omega),\quad(s,t)\in[0,1].
	\end{align}
	Let $(y_{s,t},0\leq s,t\leq1)$ be another solution to \eqref{eq:syst2} for the same $\omega\in\Omega_{\infty}$. Then, using once more Gronwall inequality for integrals on the plane, we obtain
	\begin{align*}
		\sup\limits_{(s,t)\in[0,1]^2}|y_{s,t}|\leq C_1^{\ast}\Big(M+\sup\limits_{(s,t)\in[0,1]^2}|W_{s,t}(\omega)|\Big).
	\end{align*}
	This implies that for $n$ in \eqref{eq:SolutionBound}, $(y_{s,t},0\leq s,t\leq1)$ is also a solution to \eqref{eq:syst1}. Since $\omega\in\Omega_n$, the system \eqref{eq:syst1} has a unique solution for this $\omega$. Thus, $y_{s,t}=X^{(n)}_{s,t}(\omega)$ for every $(s,t)\in[0,1]^2$ and uniqueness is proved.  
\end{proof}

\section{Preliminary results}\label{prelresul}
In order to prove the auxiliary lemmas provided in the previous section, we need some preliminary results that have been obtained by applying a local time-space integration formula for Brownian sheets (see \cite{BDM21} for related results).
Let us first recall the notion of local time in the plane of the Brownian sheet.  Let $(W^{(0)}_{s,t},(s,t)\in D)$ be a one dimensional Brownian sheet given on a filtered probability space. For $s$ fixed, $(W^{(0)}_{s,t},t\in[0,1])$ is a one dimensional Brownian motion and its local time process $(L_1^x(s,t);x\in\R,t\geq0)$ is given by the Tanaka's formula (see for example \cite[Section 1]{Wa78}): 
\begin{align}\label{eq:TanakaSheet1}
	\int_0^t\mathbf{1}_{\{W^{(0)}_{s,u}\leq x\}}\mathrm{d}_uW^{(0)}_{s,u}=\frac{s}{2}L^x_1(s,t)-(W^{(0)}_{s,t}-x)^{-}+x^{+}.
\end{align}
For $s\in[0,1]$ fixed, let $\widehat{W}^{(0)}_{s,\cdot}$ be the time reversal process on $[0,1]$ of the Brownian motion $\widehat{W}^{(0)}_{s,\cdot}$ (i.e.,$\widehat{W}^{(0)}_{s,t}=W^{(0)}_{s,1-t}$) and let $(\widehat{L}^x_1(s,t);x\in\R,0\leq t\leq1)$ be the local time process  of $(\widehat{W}^{(0)}_{s,t},0\leq t\leq 1)$. Then the following holds
\begin{align*}
	\widehat{L}^x_1(s,t)=L^x_1(s,1)-L^x_1(s,1-t).
\end{align*}
Next, we consider the local time process in the plane $L:=(L_{s,t}^x;x\in\R,s\geq0,t\geq0)$ as defined in \cite[Section 2]{Wa78} (see also \cite[Section 6, Page 157]{CW75}) by
\begin{align*}
	L^x_{s,t}:=\int_0^sL_1^x(\xi,t)\mathrm{d}\xi,\quad\forall\,x\in\R,\,\forall\,(s,t)\in\R_+^2.
\end{align*} 
Then it holds
\begin{align}\label{eq:StoInt2DLocTime01}
	L^x_{s,t}=\int_0^s\int_{1-t}^1\mathbf{1}_{\{W^{(0)}_{\xi,u}\leq x\}}\frac{\mathrm{d}_uW^{(0)}_{\xi,u}}{\xi}\mathrm{d}\xi+\int_0^s\int_{0}^t\mathbf{1}_{\{\widehat{W}^{(0)}_{\xi,u}\leq x\}}\frac{\mathrm{d}_u\widehat{W}^{(0)}_{s,u}}{\xi}\mathrm{d}\xi,\quad\forall\,(s,t)\in[0,1]^2.
\end{align}
Let us now consider the norm $\Vert\cdot\Vert$ defined by
\begin{align*}
	\Vert f\Vert:=&2\Big(\int_0^1\int_0^1\int_{\R}f^2(s,t,x)\exp\Big(-\frac{x^2}{2st}\Big)\frac{\mathrm{d}x\mathrm{d}s\mathrm{d}t}{\sqrt{2\pi st}}\Big)^{1/2}\\
	&+\int_0^1\int_0^1\int_{\R}|xf(s,t,x)|\exp\Big(-\frac{x^2}{2st}\Big)\frac{\mathrm{d}x\mathrm{d}s\mathrm{d}t}{st\sqrt{2\pi st}}\\
	=&2\Big(\int_0^1\int_0^1\E\Big[f^2(s,t,W^{(0)}_{s,t})\Big]\mathrm{d}s\mathrm{d}t\Big)^{1/2}+\int_0^1\int_0^1\E\Big[\Big|f(s,t,W^{(0)}_{s,t})\frac{W^{(0)}_{s,t}}{st}\Big|\Big] \mathrm{d}s\mathrm{d}t.
\end{align*}
Consider the set $\mathcal{H}$ of measurable functions $f$ on $[0,1]^2\times\R$ such that $\Vert f\Vert<\infty$. Endowed with $\Vert\cdot\Vert$, the space  $\mathcal{H}$ is a Banach space. In the following, we define a stochastic integral over the space with respect to the local time for the elements of $\mathcal{H}$. This extends the definition  in \cite{Ei00}.
We say that $f_{\Delta}:\,[0,1]^2\times\R\to\R$ is an elementary function if there exist two sequences of real numbers $(x_i)_{0\leq i\leq n}$, $(f_{ijk};0\leq i\leq n,0\leq j\leq m, 0\leq k\leq\ell)$ and two subdivisions of $[0,1]$ $(s_j)_{0\leq j\leq m}$, $(t_k)_{0\leq k\leq \ell}$ such that
\begin{align}\label{eq:ElemFunction}
	f_{\Delta}(s,t,x)=\sum\limits_{(x_i,s_j,t_k)\in\Delta}f_{ijk}\mathbf{1}_{(x_i,x_{i+1}]}(x)\mathbf{1}_{(s_j,s_{j+1}]}(s)\mathbf{1}_{(t_k,t_{k+1}]}(t),
\end{align}
where $\Delta=\{(x_i,s_j,t_k);0\leq i\leq n,0\leq j\leq m, 0\leq k\leq\ell\}$.
\begin{defi}
	For a simple function $f_{\Delta}$ given in \eqref{eq:ElemFunction}, we define its integral with respect to $L$ as 
	\begin{align*}
		\int_0^1\int_0^1\int_{\R}f_{\Delta}(s,t,x)\mathrm{d}L^x_{s,t}:=&\sum\limits_{(x_i,s_j,t_k)\in\Delta}f_{ijk}\Big(L^{x_{i+1}}_{s_{j+1},t_{k+1}}-L^{x_{i+1}}_{s_{j},t_{k+1}}-L^{x_{i}}_{s_{j+1},t_{k+1}}+L^{x_{i}}_{s_{j},t_{k+1}}\\
		&\text{ }-L^{x_{i+1}}_{s_{j+1},t_{k}}+L^{x_{i+1}}_{s_{j},t_{k}}+L^{x_{i}}_{s_{j+1},t_{k}}-L^{x_{i}}_{s_{j},t_{k}}\Big).
	\end{align*}  
\end{defi}
\begin{remark}
	Let $f$ be an element of $\mathcal{H}$ and let $(f_n)_{n\in\N}$ be a sequence of elementary functions converging to $f$ in $\mathcal{H}$. It is proved in \cite[Proposition 2.1]{BDM21} that the sequence $\left(\int_0^1\int_0^1\int_{\R}f_n(s,t,x)\mathrm{d}L^x_{s,t}\right)_{n\in\N}$ converges in $L^1(\Omega,\Pb)$ and that the limit does not depend on the choice of the sequence $(f_n)_{n\in\N}$. This limit is called integral of $f$ with respect to $L$. Similar results were obtained in \cite{Ei00}.
\end{remark}

Let $f:\,[0,1]^2\times\R^d\to\R$ be a continuous function such that for any $(s,t)\in[0,1]^2$, $f(s,t,\cdot)$ is differentiable and for any $i\in\{1,\cdots,d\}$, the partial derivative $\partial_{x_i}f$ is continuous. We also know from \cite[Proposition 3.1]{BDM21} that for a $d$-dimensional Brownian sheet $\Big(W_{s,t}:=(W_{s,t}^{(1)},\cdots,W_{s,t}^{(d)});s\geq0,t\geq0\Big)$ defined on a filtered probability space and for any $(s,t)\in[0,1]^2$ and any $i\in\{1,\cdots,d\}$, we have
\begin{align}\label{eq:EisenSheetdD01}
	&\int_0^s\int_0^t\partial_{x_i}f(\xi,u,W_{\xi,u})\mathrm{d}u\mathrm{d}\xi\notag\\
	=&-\int_0^s\int_0^tf(\xi,u,W_{\xi,u})\frac{d_uW^{(i)}_{\xi,u}}{\xi}\mathrm{d}\xi-\int_0^s\int_{1-t}^1f(\xi,1-u,\widehat{W}_{\xi,u})\frac{d_uB^{(i)}_{\xi,u}}{\xi}\mathrm{d}\xi\nonumber\\
	&+\int_0^s\int_{1-t}^1f(\xi,1-u,\widehat{W}_{\xi,u})\frac{\widehat{W}^{(i)}_{\xi,u}}{\xi(1-u)}\mathrm{d}u\mathrm{d}\xi,
\end{align}
where $\widehat{W}^{(i)}:=(\widehat{W}^{(i)}_{\xi,u};0\leq\xi,u\leq1)$ and $B^{(i)}:=(B^{(i)}_{\xi,u};0\leq\xi,u\leq1)$ is a standard Brownian sheet with respect to the filtration of $\widehat{W}^{(i)}$, independent of $(W^{(i)}_{s,1},s\geq0)$. 

The following result will be extensively used in this work and corresponds to \cite[Proposition 2.1]{Sh16} for the standard Wiener process.

\begin{prop}\label{prop:DavieSheet1dd}
	Let $W:=\left(W^{(1)}_{s,t},\ldots,W^{(d)}_{s,t};(s,t)\in[0,1]^2\right)$ be a $\R^d$-valued Brownian sheet defined on a filtered probability space $(\Omega,\mathcal{F},\mathbb{F},\Pb)$, where $\mathbb{F}=(\mathcal{F}_{s,t};s,t\in[0,1])$. Let $b\in\mathcal{C}\left([0,1]^2,\mathcal{C}^1(\R^d)\right)$, $\Vert b\Vert_{\infty}\leq1$.
	Let $(a,\pa,\ve,\vep)\in[0,1]^4$. Then there exist positive constants $\alpha$ and $C$ (independent of $\nabla_yb$, $a$, $\pa$, $\ve$ and $\vep$) such that 
	\begin{align}\label{eq:DavieSheet0dd}
		\E\Big[\exp\Big(\alpha\vep \ve \Big|\int_0^1\int_0^1\nabla_yb\left(s,t,\widetilde{W}^{\ve,\vep}_{s,t}\right)\mathrm{d}t\mathrm{d}s\Big|^2\Big)\Big]\leq C.
	\end{align} 
	Here $\nabla_yb$ denotes the gradient of $b$ with respect to the third variable, $|\cdot|$ is the usual norm on $\R^d$ and  the $\R^d$-valued two-parameter Gaussian process $\widetilde{W}^{\ve,\vep}:=\Big(\widetilde{W}^{(\ve,\vep,1)}_{s,t},\ldots,\widetilde{W}^{(\ve,\vep,d)}_{s,t};(s,t)\in[0,1]^2\Big)$ is given by 
	$$
	\widetilde{W}^{(\ve,\vep,i)}_{s,t}=W^{(i)}_{\pa+\vep s,a+\ve t}-W^{(i)}_{\pa,a+\ve t}-W^{(i)}_{\pa+\vep s,a}+W^{(i)}_{\pa,a}\quad\text{for all }i\in\{1,\ldots,d\}.
	$$ 
\end{prop}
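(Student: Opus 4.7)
The plan is to reduce to the standard Brownian sheet by a scaling identity, apply the local time-space integration formula \eqref{eq:EisenSheetdD01} componentwise to move the derivative off $b$, and convert polynomial $L^p$-growth bounds into the desired exponential-moment estimate via the standard sub-Gaussian principle.

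First, the rectangular increment $\widetilde W^{\ve,\vep}$ is a centered Gaussian process with $\E[\widetilde W^{(\ve,\vep,i)}_{s,t}\widetilde W^{(\ve,\vep,i)}_{\ps,\tp}]=\ve\vep\min(s,\ps)\min(t,\tp)$, so $\widetilde W^{\ve,\vep}\stackrel{\text{(law)}}{=}\sqrt{\ve\vep}\,B$ for a standard $d$-dimensional Brownian sheet $B$. Setting $\tilde b(s,t,y):=b(s,t,\sqrt{\ve\vep}\,y)$, which still satisfies $\|\tilde b\|_\infty\leq 1$, the chain rule gives
\[
\sqrt{\ve\vep}\int_0^1\!\!\int_0^1\nabla_y b(s,t,\widetilde W^{\ve,\vep}_{s,t})\,\mathrm{d}t\,\mathrm{d}s\;\stackrel{\text{(law)}}{=}\;\int_0^1\!\!\int_0^1\nabla_y \tilde b(s,t,B_{s,t})\,\mathrm{d}t\,\mathrm{d}s,
\]
which absorbs the prefactor $\ve\vep$ in \eqref{eq:DavieSheet0dd}. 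Thus it suffices to prove a universal bound $\E[\exp(\alpha|I|^2)]\leq C$ for $I_i:=\int_0^1\!\int_0^1\partial_{y_i}\tilde b(s,t,B_{s,t})\,\mathrm{d}t\,\mathrm{d}s$, $i=1,\ldots,d$, with $\alpha,C$ depending only on the dimension and the bound $\|\tilde b\|_\infty\leq 1$.

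Next, I would apply \eqref{eq:EisenSheetdD01} with $f=\tilde b$, $W=B$, and $s=t=1$, obtaining $I_i=-A_i^{(1)}-A_i^{(2)}+A_i^{(3)}$ for each $i$, where $A_i^{(1)}=\int_0^1\xi^{-1}M_\xi^{(i)}\,\mathrm{d}\xi$ with $M_\xi^{(i)}:=\int_0^1\tilde b(\xi,u,B_{\xi,u})\,\mathrm{d}_uB^{(i)}_{\xi,u}$, $A_i^{(2)}$ is the analogous term involving the time-reversed sheet $\widehat B$ and the auxiliary independent sheet appearing in \eqref{eq:EisenSheetdD01}, and $A_i^{(3)}=\int_0^1\!\int_0^1\tilde b(\xi,1-u,\widehat B_{\xi,u})\widehat B^{(i)}_{\xi,u}[\xi(1-u)]^{-1}\,\mathrm{d}u\,\mathrm{d}\xi$. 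For $A_i^{(1)}$, since $|\tilde b|\leq 1$ and $\mathrm{d}\langle B^{(i)}_{\xi,\cdot}\rangle_u=\xi\,\mathrm{d}u$, Burkholder-Davis-Gundy (or the sub-Gaussian property of bounded-integrand It\^o integrals) gives $\|M_\xi^{(i)}\|_{L^p}\leq C\sqrt{p\xi}$, and Minkowski's inequality then yields $\|A_i^{(1)}\|_{L^p}\leq C\sqrt{p}\int_0^1\xi^{-1/2}\,\mathrm{d}\xi=2C\sqrt{p}$; the same treatment handles $A_i^{(2)}$. For $A_i^{(3)}$, using that $\widehat B^{(i)}_{\xi,u}$ is centered Gaussian with variance $\xi(1-u)$, so $\|\widehat B^{(i)}_{\xi,u}\|_{L^p}\leq C\sqrt{p\xi(1-u)}$, and $|\tilde b|\leq 1$, Minkowski gives $\|A_i^{(3)}\|_{L^p}\leq C\sqrt{p}\int_0^1\!\int_0^1[\xi(1-u)]^{-1/2}\,\mathrm{d}u\,\mathrm{d}\xi=4C\sqrt{p}$. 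Summing, $\|I_i\|_{L^p}\leq K\sqrt{p}$ for every $p\geq 1$ with $K$ universal.

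The polynomial-in-$\sqrt{p}$ growth of the $L^p$ norms is equivalent to sub-Gaussianity, so there exist $\alpha_0>0$ and $C_0>0$ with $\E[\exp(\alpha_0 I_i^2)]\leq C_0$ for every $i$. Writing $\exp(\alpha|I|^2)=\prod_{i=1}^d\exp(\alpha I_i^2)$ and applying H\"older's inequality with exponent $d$,
\[
\E[\exp(\alpha|I|^2)]\leq\prod_{i=1}^d\E[\exp(d\alpha I_i^2)]^{1/d}\leq C_0,
\]
provided $d\alpha\leq\alpha_0$, so the choice $\alpha:=\alpha_0/d$ completes the proof. The main obstacle is the singular kernels $1/\xi$ and $1/(1-u)$ inherent in \eqref{eq:EisenSheetdD01}: each individual piece of the decomposition of $I_i$ has unbounded second moment in a naive sense (e.g.~$\E[(M_\xi^{(i)}/\xi)^2]=O(1/\xi)$, which is not integrable in $\xi$). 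The decisive observation that closes the argument is that the $L^p$-norm must be taken \emph{before} integrating in $\xi$, which converts the non-integrable $1/\xi$ into the integrable $1/\sqrt{\xi}$, and similarly for the $u$-variable; only then are the constants finite and uniform in $\|\tilde b\|_\infty\leq 1$.
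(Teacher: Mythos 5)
Your proposal is correct and follows essentially the same route as the paper: both reduce to the standard Brownian sheet via the scaling $\widetilde W^{\ve,\vep}\stackrel{\text{(law)}}{=}\sqrt{\ve\vep}\,B$ with $f(s,t,y)=b(s,t,\sqrt{\ve\vep}\,y)$, apply the local time-space integration formula \eqref{eq:EisenSheetdD01} at $s=t=1$ to obtain the same three-term decomposition, and control each term by a martingale moment estimate with $\sqrt p$-growth. The differences are packaging rather than substance: where you run Minkowski's integral inequality in $\xi$ followed by BDG (Barlow--Yor) and then appeal to the general $L^p$-growth-implies-sub-Gaussian characterisation, the paper instead fixes the probability weight $\mathrm{d}s/(2\sqrt s)$, applies Jensen, expands $\exp$ as a power series, and verifies summability of $(8\alpha c_1^2)^m m^m/m!$ by the ratio test --- which is precisely the classical proof of that same sub-Gaussian equivalence. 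Likewise, you handle the dimension by H\"older at the end ($\exp(\alpha|I|^2)=\prod_i\exp(\alpha I_i^2)$) whereas the paper applies Jensen to the convex map $x\mapsto e^{\alpha d\ve\vep x^2}$ at the start; both introduce the same factor $d$ in the exponent. Your treatment of $A^{(3)}_i$ by computing $\|\widehat B^{(i)}_{\xi,u}\|_{L^p}$ directly is a clean alternative to the paper's Jensen-in-exponential manipulation and gives the same result. In short, the proof is valid and, though written in $L^p$-moment language rather than series-expansion language, it is the same argument.
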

\begin{proof}
	The proof of \eqref{eq:DavieSheet0dd} is based on the local time-space integration formula \eqref{eq:EisenSheetdD01} and the Barlow-Yor inequality. Fix $(a,\pa,\ve,\vep)\in[0,1]^4$. Since $x\longmapsto e^{\alpha\ve\vep dx^2}$ is a convex function, we deduce from the Jensen inequality that 
	\begin{align*}
		&\E\Big[\exp\Big(\alpha\vep \ve \Big|\int_0^1\int_0^1\nabla_yb\Big(s,t,\widetilde{W}^{\vep,\ve}_{s,t}\Big)\mathrm{d}t\mathrm{d}s\Big|^2\Big)\Big]\\
		=&\E\Big[\exp\Big(\alpha\vep \ve \sum\limits_{i=1}^d\Big|\int_0^1\int_0^1\partial_{y_i}b\Big(s,t,\widetilde{W}^{\vep,\ve}_{s,t}\Big)\mathrm{d}t\mathrm{d}s\Big|^2\Big)\Big]\\
		\leq&\frac{1}{d}\sum\limits_{i=1}^d\E\Big[\exp\Big(\alpha d\vep \ve\Big|\int_0^1\int_0^1\partial_{y_i}b\Big(s,t,\widetilde{W}^{\vep,\ve}_{s,t}\Big)\mathrm{d}t\mathrm{d}s\Big|^2\Big)\Big].
	\end{align*}
	In order to obtain \eqref{eq:DavieSheet0dd} it suffices to prove that for every $i\in\{1,2,\ldots,d\}$, there exist positive constants $\alpha=\alpha_i$ and $C=C_i$  such that 
	\begin{align*}
		\E\Big[\exp\Big(\alpha \ve \vep\left|\int_0^1\int_0^1\partial_{y_i} b\Big(s,t,\widetilde{W}^{\vep,\ve}_{s,t}\Big)\mathrm{d}t\mathrm{d}s\Big|^2\Big)\right]\leq C.
	\end{align*}
	For every $i\in\{1,\ldots,d\}$, we apply \eqref{eq:EisenSheetdD01} to the standard $d$-dimensional Brownian sheet $\Big(Y_{s,t}:=(\ve\vep)^{-1/2}\widetilde{W}^{\vep,\ve}_{s,t},(s,t)\in[0,1]^2\Big)$ and the function $f:\,[0,1]^2\times\R^d\to\R$ given by $f(s,t,y)=b\Big(s,t,\sqrt{\ve\vep}\,y\Big)$ to obtain
	\begin{align*}
		&\int_0^1\int_0^1\partial_{y_i}f(s,t,Y_{s,t})\mathrm{d}t\mathrm{d}s\\
		=&
		-\int_0^1\int_0^1f(s,t,Y_{s,t})\frac{\mathrm{d}_tY^{(i)}_{s,t}}{s}\mathrm{d}s-\int_0^1\int_{0}^1f(s,1-t,Y_{s,1-t})\frac{\mathrm{d}_tB^{(i)}_{s,t}}{s}\mathrm{d}s\nonumber\\
		&+\int_0^1\int_{0}^1f(s,1-t,Y_{s,1-t})\frac{Y^{(i)}_{s,1-t}}{s(1-t)}\mathrm{d}t\mathrm{d}s,
	\end{align*} 
	where $(B^{(i)}_{s,t},0\leq s,t\leq1)$ denotes a standard Brownian sheet independent of the process $(Y^{(i)}_{s,1},0\leq s\leq 1)$. Hence,
	\begin{align*}
		&\sqrt{\ve \vep}\int_0^1\int_0^1\partial_{y_i}b(s,t,\widetilde{W}^{\vep,\ve}_{s,t})\mathrm{d}t\mathrm{d}s\\
		=&-\int_0^1\int_0^1b(s,t,\widetilde{W}^{\vep,\ve}_{s,t})\frac{\mathrm{d}_tY^{(i)}_{s,t}}{s}\mathrm{d}s-\int_0^1\int_{0}^1b(s,1-t,\widetilde{W}^{\vep,\ve}_{s,1-t})\frac{\mathrm{d}_tB^{(i)}_{s,t}}{s}\mathrm{d}s\\
		&+\int_0^1\int_{0}^1b(s,1-t,\widetilde{W}^{\vep,\ve}_{s,1-t})\frac{Y^{(i)}_{s,1-t}}{s(1-t)}\mathrm{d}t\mathrm{d}s\\
		=&I_1+I_2+I_3.
	\end{align*}
	Using once more the convexity of the function $x\longmapsto e^{3\alpha x^2}$ for any $\alpha>0$, we obtain
	\begin{align*}
		&\E\Big[\exp\Big(\alpha \ve \vep\Big|\int_0^1\int_0^1\nabla_yb\Big(s,t,\widetilde{W}^{\vep,\ve}_{s,t}\Big)\mathrm{d}t\mathrm{d}s\Big|^2\Big)\Big]\notag\\
		=&\E\Big[\exp\Big(\alpha (I_1+I_2+I_3)^2\Big)\Big]\\
		\leq&\frac{1}{3}\Big(\E\Big[\exp(3\alpha I_1^2)\Big]+\E\Big[\exp(3\alpha I_2^2)\Big]+\E\Big[\exp(3\alpha I_3^2)\Big]\Big).
	\end{align*}
	Hence to get the desired estimate, we need to prove that for every $k\in\{1,2,3\}$, there exist positive constants $\alpha_k$ and $C_k$ such that $\E\left[\exp(\alpha_k I_k^2)\right]\leq C_k.$\\
For every $s\in]0,1]$, $\left(s^{-1/2}Y^{}_{s,v},0\leq v\leq1\right)$ is a standard Brownian motion with respect to the filtration $\mathcal{F}_{1,\cdot}:=(\mathcal{F}_{1,t},t\in[0,1])$. Therefore the process
		$$
		\Big(M_{s,t}:=\int_{0}^tb(s,v,\widetilde{W}^{\vep,\ve}_{s,v})\mathrm{d}_v\left[\frac{Y^{}_{s,v}}{\sqrt{s}}\right],0\leq t\leq1\Big)
		$$
		is an It\^o integral  with respect to $\mathcal{F}_{1,\cdot}$ and thus a square-integrable $\mathcal{F}_{1,\cdot}$-martingale.
		In addition, for any constant $\alpha\in\R_+$, the following expansion formula holds
		\begin{align*}
			\E\Big[\exp\Big(\alpha I_1^2\Big)\Big]
			=\E\Big[\exp\Big(\alpha\Big|\int_0^1M_{s,1}^{}\frac{\mathrm{d}s}{\sqrt{s}}\Big|^2\Big)\Big]=\sum\limits_{m=0}^{\infty}\frac{\alpha^m\E\Big[\Big|\displaystyle\int_0^1M_{s,1}^{}\frac{\mathrm{d}s}{\sqrt{s}}\Big|^{2m}\Big]}{m!}. 
		\end{align*}
		Moreover, by the Jensen inequality and the Barlow-Yor inequality applied to the martingale $(M_{s,t},t\in[0,1])$ (see for example \cite[Proposition 4.2]{BY82} and 
		\cite[Appendix]{CK91}), 
		there exists a universal constant $c_1$ (not depending on $m$) such that,
		\begin{align*}
			\E\Big[\Big|\int_0^1M_{s,1}^{}\frac{\mathrm{d}s}{\sqrt{s}}\Big|^{2m}\Big]&\leq4^m\int_0^1\E\Big[|M_{s,1}|^{2m}\Big]\frac{\mathrm{d}s}{2\sqrt{s}}\leq 4^m\int_0^1\E\Big[\Big(\sup\limits_{0\leq t\leq 1}|M_{s,t}|\Big)^{2m}\Big]\frac{\mathrm{d}s}{2\sqrt{s}}\\&\leq c_1^{2m}(8m)^m\int_0^1\E\Big[\langle M^{}_{s,\cdot}\rangle_1^m\Big]\frac{\mathrm{d}s}{2\sqrt{s}}\\
			&\leq c_1^{2m}(8m)^m\int_0^1\E\Big[\Big(\int_{0}^{1}b^2(s,t,\widetilde{W}^{\vep,\ve}_{s,t})\mathrm{d}t\Big)^m\Big]\frac{\mathrm{d}s}{2\sqrt{s}}
			\leq c_1^{2m}(8m)^m,
		\end{align*} 
		since $\Vert b\Vert_{\infty}\leq 1$. Thus,
		\begin{align*}
			\E\Big[\exp\Big(\alpha I_1^2\Big)\Big]
			=\E\Big[\exp\Big(\alpha\Big|\int_0^1M_{s,1}^{}\frac{\mathrm{d}s}{\sqrt{s}}\Big|^2\Big)\Big]=
			\sum\limits_{m=0}^{\infty}\frac{\Big(8\alpha c_1^2\Big)^mm^m}{m!}. 
		\end{align*} 
		The above expression if finite for $\alpha$ such that $8\alpha c_1^2e<1$, i.e. $\alpha<1/8c_1^2e$ (by ratio test).
		Hence, there exists positive constants $\alpha_1$ and $C_1$ such that
		$$
		\E\Big[\exp\Big(\alpha_1 I_1^2\Big)\Big]\leq C_1.
		$$ 

	Similarly for
	\begin{align*}
		I_2=-\int_0^1\int_{0}^1b(s,1-t,\widetilde{W}^{\vep,\ve}_{s,1-t})\mathrm{d}_tB^{(i)}_{s,t}\frac{\mathrm{d}s}{s},
	\end{align*}
	there exists positive constants $\alpha_2$ and $C_2$ such that
	\begin{align*}
		\E\Big[\exp\Big(\alpha_2  I_2^2\Big)\Big]
		\leq\E\Big[\exp\Big(\alpha_2\Big|\int_0^1\int_{0}^1b(s,1-t,\widetilde{W}^{\vep,\ve}_{s,1-t})\mathrm{d}_tB^{(i)}_{s,t}\frac{\mathrm{d}s}{s}\Big|^2\Big)\Big]\leq C_2.
	\end{align*}
	It remains to estimate the term $I_3$. By the Jensen inequality, we have
	\begin{align}
		\E\Big[\exp\Big(\frac{I_3^2}{64}\Big)\Big]
		=&\E\Big[\exp\Big\{\frac{1}{4}\Big(\int_0^1\int_{0}^1\frac{b(s,1-t,\widetilde{W}^{\vep,\ve}_{s,1-t})Y^{(i)}_{s,1-t}}{\sqrt{s(1-t)}}\frac{\mathrm{d}t\mathrm{d}s}{4\sqrt{s(1-t)}}\Big)^2\Big\}\Big]
		\nonumber\\
		\leq&\int_0^1\int_{0}^{1}\E\Big[\exp\Big\{\frac{1}{4}b^2(s,1-t,\widetilde{W}^{\vep,\ve}_{s,1-t})\Big|\frac{Y^{(i)}_{s,1-t}}{\sqrt{s(1-t)}}\Big|^2\Big\}\Big]\frac{\mathrm{d}t\mathrm{d}s}{4\sqrt{s(1-t)}}\nonumber\\
		\leq&\int_0^1\int_{0}^{1}\E\Big[\exp\Big(\frac{1}{4}\Big|\frac{Y^{(i)}_{s,1-t}}{\sqrt{s(1-t)}}\Big|^2\Big)\Big]\frac{\mathrm{d}t\mathrm{d}s}{4\sqrt{s(1-t)}}. \label{eq:EstimShap2}
	\end{align}
	Note that for every $(s,t)\in]0,1]\times[0,1[$, $\dfrac{Y^{(i)}_{s,1-t}}{\sqrt{s(1-t)}}$ is a standard normal random variable. Therefore \eqref{eq:EstimShap2} yields 
	$$
	\E\Big[\exp\Big(\frac{I_3^2}{64}\Big)\Big]\leq C_3.
	$$
	The proof of \eqref{eq:DavieSheet0dd} is completed by taking $\alpha=\min(\frac{1}{64},\alpha_2,\alpha_3)$.
\end{proof}
For every $0\leq a< h\leq 1$, $0\leq \pa< \hp\leq 1$ and for $(x,y)\in\mathbb{R}^d$ define the function $\varrho$ by: 
$$
\varrho(x,y):=\int_{\pa}^{\hp}\int_a^h\Big\{b(\xi,\zeta,W_{\xi,\zeta}+x)-b(\xi,\zeta,W_{\xi,\zeta}+y)\Big\}\mathrm{d}\zeta\mathrm{d}\xi.
$$
As a consequence of Proposition \ref{prop:DavieSheet1dd}, we have:
\begin{corollary}\label{corol:DavieSheet1dds1}
	Let $b:[0,1]^2\times\R^d\to\R$ be a bounded and Borel measurable function such that $\Vert b\Vert_{\infty}\leq1$. Let $\alpha$, $C$ and $\widetilde{W}^{\ve,\vep}$ be defined as in Proposition  \ref{prop:DavieSheet1dd}. Then the following two bounds are valid: 
	\begin{enumerate}
		\item For every $(x,y)\in\R^{2d}$, $x\neq y$ and every $(\ve,\vep)\in[0,1]^2$, we have
		\begin{align}\label{eq:DavieSheet02dd}
			\E\Big[\exp\Big(\frac{\alpha\vep \ve }{|x-y|^2}\Big|\int_0^1\int_0^1\left\{b(s,t,\widetilde{W}^{\vep,\ve}_{s,t}+x)-b(s,t,\widetilde{W}^{\vep,\ve}_{s,t}+y)\right\}\mathrm{d}t\mathrm{d}s\Big|^2\Big)\Big]\leq C.
		\end{align}
		\item  
		For any $(x,y)\in\R^2$ and any $\eta>0$, we have
	\begin{align}\label{eq:EstDavieSigma1}
			\Pb\left(|\varrho(x,y)|\geq\eta\sqrt{(h-a)(\hp-\pa)}|x-y|\right)
			\leq Ce^{-\alpha\eta^2}.
		\end{align}
	\end{enumerate}
\end{corollary}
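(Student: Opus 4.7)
The plan is to deduce part (1) from Proposition \ref{prop:DavieSheet1dd} by a first-order Taylor expansion in the spatial variable, and then to reduce part (2) to part (1) by rescaling the rectangle $[\pa,\hp]\times[a,h]$ to $[0,1]^2$, conditioning on a boundary sigma-algebra, and invoking Chebyshev's exponential inequality.

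For (1), I would first regularise $b$ by convolution in the spatial variable to produce a sequence $b_n\in C([0,1]^2, C^1(\R^d))$ with $\|b_n\|_\infty\leq 1$ and $b_n\to b$ almost everywhere; the conclusion then transfers from the smooth $b_n$ to $b$ by Fatou's lemma together with dominated convergence for the inner $dt\,ds$ integral. For smooth $b$, the fundamental theorem of calculus gives
\begin{align*}
b(s,t,\widetilde W^{\vep,\ve}_{s,t}+x) - b(s,t,\widetilde W^{\vep,\ve}_{s,t}+y) = \int_0^1 \nabla_y b\bigl(s,t,\widetilde W^{\vep,\ve}_{s,t}+y+\theta(x-y)\bigr)\cdot(x-y)\,d\theta,
\end{align*}
so, after setting $F(\theta):=\bigl|\int_0^1\!\!\int_0^1 \nabla_y b(s,t,\widetilde W^{\vep,\ve}_{s,t}+y+\theta(x-y))\,dt\,ds\bigr|$, the absolute value of the full double integral is bounded by $|x-y|\int_0^1 F(\theta)\,d\theta$. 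Two successive applications of Jensen's inequality (first $(\int_0^1 F)^2\leq \int_0^1 F^2$, then $\exp(\int_0^1 g)\leq \int_0^1 \exp g$) followed by Fubini reduce the claim to the uniform estimate $\sup_\theta \E[\exp(\alpha\vep\ve\, F(\theta)^2)]\leq C$, which is precisely Proposition \ref{prop:DavieSheet1dd} applied to the translated drift $b_\theta(s,t,w):=b(s,t, w+y+\theta(x-y))$, still of sup-norm at most $1$.

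For (2), I would set $\vep:=\hp-\pa$, $\ve:=h-a$ and change variables $\xi=\pa+\vep s$, $\zeta = a+\ve t$, which gives $\varrho(x,y) = \vep\ve \int_0^1\!\!\int_0^1\{b(\cdot,\cdot,W_{\pa+\vep s, a+\ve t}+x)-b(\cdot,\cdot,W_{\pa+\vep s, a+\ve t}+y)\}\,dt\,ds$. Using the rectangular-increment decomposition $W_{\pa+\vep s, a+\ve t} = \widetilde W^{\vep,\ve}_{s,t} + Z_{s,t}$ with $Z_{s,t}:= W_{\pa+\vep s, a}+W_{\pa, a+\ve t}-W_{\pa, a}$, the remainder $Z$ is measurable with respect to the $\sigma$-algebra $\mathcal{G}$ generated by $W$ on the ``L-shaped'' boundary $\{\pa\}\times[a,h]\cup[\pa,\hp]\times\{a\}$, while $\widetilde W^{\vep,\ve}$ is independent of $\mathcal{G}$. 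Conditioning on $\mathcal{G}$ and applying part (1) to the $\mathcal{G}$-measurable random drift $\bar b(s,t,w):=b(\pa+\vep s, a+\ve t, w+Z_{s,t}+y)$ (with spatial arguments $x-y$ and $0$; its sup-norm is still $\leq 1$) yields
\begin{align*}
\E\Big[\exp\Big(\frac{\alpha}{\vep\ve\,|x-y|^2}|\varrho(x,y)|^2\Big)\,\Big|\,\mathcal{G}\Big]\leq C,
\end{align*}
and Chebyshev's exponential inequality at level $e^{\alpha\eta^2}$ then delivers \eqref{eq:EstDavieSigma1}, since $\sqrt{\vep\ve}=\sqrt{(h-a)(\hp-\pa)}$. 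The main obstacle I anticipate is the conditional-independence step in (2): one must carefully isolate the four-corner increment $\widetilde W^{\vep,\ve}$ inside the ambient sheet $W$, verify its independence from the boundary $\sigma$-algebra $\mathcal{G}$, and justify that part (1) applies pathwise to the random but $\mathcal{G}$-measurable drift $\bar b$ (this is fine because, conditional on $\mathcal{G}$, $\bar b$ is deterministic and $\widetilde W^{\vep,\ve}$ is still a standard Brownian sheet on $[0,1]^2$ multiplied by $\sqrt{\vep\ve}$).
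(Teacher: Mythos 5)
Your proposal is correct and follows essentially the same route as the paper's proof: for part (1), smooth approximation of $b$ followed by the fundamental theorem of calculus, Cauchy–Schwarz, Jensen twice, and Proposition \ref{prop:DavieSheet1dd} applied to the translated drift (the paper invokes Vitali where you invoke Fatou, but both close the approximation step); for part (2), the same rescaling $\xi=\pa+\vep s$, $\zeta=a+\ve t$, the same decomposition $W_{\pa+\vep s,a+\ve t}=\widetilde W^{\vep,\ve}_{s,t}+Z_{s,t}$ with $\widetilde W^{\vep,\ve}\perp Z$, and the same Chebyshev step. The paper phrases the independence step as a disintegration over $\Pb_{Z}(\mathrm{d}z)$ rather than as conditioning on the boundary $\sigma$-algebra $\mathcal G$, but these are the same argument.
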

\begin{proof} 
	We start by showing \eqref{eq:DavieSheet02dd}. Note that it is enough to show this when $b$ is compactly supported and differentiable. Indeed, if $b$ is not differentiable, then, since the set of compactly supported and differentiable functions is dense in $L^{\infty}([0,1]^2\times\R^d)$, there exists a sequence $(b_n,n\in\N)$ of compactly supported and differentiable functions which converges a.e. to $b$ on $[0,1]^2\times\R^d$ and the desired result will follow from the Vitali's convergence theorem.\\
	Using the mean value theorem and the Cauchy-Schwartz inequality, we have
	\begin{align*}
		&\Big|\int_0^1\int_0^1\Big\{b(s,t,\widetilde{W}^{\vep,\ve}_{s,t}+x)-b(s,t,\widetilde{W}^{\vep,\ve}_{s,t}+y)\Big\}\mathrm{d}t\mathrm{d}s\Big|^2\\
		=&\Big|\int_0^1\int_0^1\int_0^1\nabla_yb_{}(s,t,\widetilde{W}^{\vep,\ve}_{s,t}+y+\xi(x-y))\cdot(x-y)\mathrm{d}\xi \mathrm{d}t\mathrm{d}s\Big|^2\\
		\leq&|x-y|^2\Big|\int_0^1\int_0^1\int_0^1\nabla_yb_{}(s,t,\widetilde{W}^{\vep,\ve}_{s,t}+y+\xi(x-y))\mathrm{d}\xi \mathrm{d}t\mathrm{d}s\Big|^2.
	\end{align*}
	Using the Minkowski inequality, the Jensen inequality and Proposition \ref{prop:DavieSheet1dd} applied to the function $(s,t,z)\longmapsto b(s,t,z+y+\xi(x-y))$, we obtain
	\begin{align}
		&\E\Big[\exp\Big(\frac{\alpha \ve \vep}{|x-y|^2}\Big|\int_0^1\int_0^1\Big\{b(s,t,\widetilde{W}^{\vep,\ve}_{s,t}+x)-b(s,t,\widetilde{W}^{\vep,\ve}_{s,t}+y)\Big\}\mathrm{d}t\mathrm{d}s\Big|^2\Big)\Big]\nonumber\\
		=&\E\Big[\exp\Big(\frac{\alpha \ve \vep}{|x-y|^2}\Big|\int_0^1\int_0^1\int_0^1\nabla_yb(s,t,\widetilde{W}^{\vep,\ve}_{s,t}+y+\xi(x-y))\cdot(x-y)\mathrm{d}\xi \mathrm{d}t\mathrm{d}s\Big|^2\Big)\Big]\nonumber\\
		\leq&\E\Big[\exp\Big(\frac{\alpha \ve \vep}{|x-y|^2}\int_0^1\Big|\int_0^1\int_0^1\nabla_yb(s,t,\widetilde{W}^{\vep,\ve}_{s,t}+y+\xi(x-y))\cdot(x-y) \mathrm{d}t\mathrm{d}s\Big|^2\mathrm{d}\xi\Big)\Big]\nonumber\\
		\leq& \int_0^1\E\Big[\exp\Big(\alpha \ve \vep\Big|\int_0^1\int_0^1\nabla_yb(s,t,\widetilde{W}^{\vep,\ve}_{s,t}+y+\xi(x-y))\mathrm{d}t\mathrm{d}s\Big|^2\Big)\Big]\mathrm{d}\xi\leq C.\label{eq:DavieSheetEstgs}
	\end{align}
	This ends the proof of \eqref{eq:DavieSheet02dd}.
	
As for the proof of  \eqref{eq:EstDavieSigma1}, let $(x,y)\in\R^{2d}$ such that $x\neq y$ and set $\ve=h-a$ and$\vep=\hp-\pa$. Define $\widehat{b}$ by $\widehat{b}(s,t,x):=b(\pa+\vep s,a+\ve t,x)$ and additionally define the processes 
	$\widetilde{W}^{\vep,\ve}:=\Big(\widetilde{W}_{s,t}^{\ve,\vep},(s,t)\in[0,1]^2\Big)$ and $Z^{\vep,\ve}:=\Big(Z_{s,t}^{\vep,\ve},(s,t)\in[0,1]^2\Big)$, respectively by
	\begin{align*}
		\widetilde{W}_{s,t}^{\vep,\ve}=W_{\pa+\vep s,a+\ve t}-W_{\pa,a+\ve t}-W_{\pa+\vep s,a}+W_{\pa,a},
	\end{align*}
	and
	\begin{align*}
		Z_{s,t}^{\vep,\ve}=W_{\pa,a+\ve t}+W_{\pa+\vep s,a}-W_{\pa,a}.
	\end{align*}
	Then $\widetilde{W}^{\vep,\ve}$ and $Z^{\vep,\ve}$ are independent processes. To see this, observe that  $Z^{\vep,\ve}_{s,t}$ is $\mathcal{F}_{1,a}\vee\mathcal{F}_{\pa,1}$-measurable for every $(s,t)\in[0,1]^2$ and $\widetilde{W}^{\vep,\ve}$ is independent of $\mathcal{F}_{1,a}\vee\mathcal{F}_{\pa,1}$.
	Using the change of variable $(\xi,\zeta):=(\pa+\vep s,a+\ve t)$, we obtain
	
	\begin{align*}
		\varrho(x,y)&=\int_{\pa}^{\hp}\int_a^h\Big\{b(\xi,\zeta,W_{\xi,\zeta}+x)-b(\xi,\zeta,W_{\xi,\zeta}+y)\Big\}\mathrm{d}\zeta \mathrm{d}\xi\\
		=&\ve\vep\int_0^1\int_0^1\Big\{\widehat{b}(s,t,W_{\pa+\vep s,a+\ve t}+x)-\widehat{b}(s,t,W_{\pa+\vep s,a+\ve t}+y)\Big\}\mathrm{d}t\mathrm{d}s\\
		=&\ve\vep\int_0^1\int_0^1\Big\{\widehat{b}(s,t,\widetilde{W}_{s,t}^{\vep,\ve}+Z_{s,t}^{\vep,\ve}+x)-\widehat{b}(s,t,\widetilde{W}_{s,t}^{\vep,\ve}+Z_{s,t}^{\ve,\vep}+y)\Big\}\mathrm{d}t\mathrm{d}s.
	\end{align*}
	Taking the expectation on both sides after some operations and using the fact that $\widetilde{W}^{\vep,\ve}$ and $Z^{\vep,\ve}$ are independent, we have
	\begin{align*}
		&\E\Big[\exp\Big(\frac{\alpha|\varrho(x,y)|^2}{\ve\vep|x-y|^2}\Big)\Big]\\
		=&\E\Big[\exp\Big(\frac{\alpha\ve\vep}{|x-y|^2}\Big|\int_0^1\int_0^1\left\{\widehat{b}(s,t,\widetilde{W}_{s,t}^{\vep,\ve}+Z_{s,t}^{\ve,\vep}+x)-\widehat{b}(s,t,\widetilde{W}_{s,t}^{\vep,\ve}+Z_{s,t}^{\ve,\vep}+y)\right\}\mathrm{d}t\mathrm{d}s\Big|^2\Big)\Big]\\
		=&\int \E\Big[\exp\Big(\frac{\alpha\ve\vep}{|x-y|^2}\Big|\int_0^1\int_0^1\Big\{\widehat{b}^z_{s,t}(\widetilde{W}_{s,t}^{\vep,\ve}+x)-\widehat{b}^z_{s,t}(\widetilde{W}_{s,t}^{\vep,\ve}+y)\Big\}\mathrm{d}t \mathrm{d}s\Big|^2\Big)\Big]\Pb_{Z^{\ve,\vep}}(\mathrm{d}z),
	\end{align*}
	where $\widehat{b}^z_{s,t}(w)=\widehat{b}(s,t,w+z_{s,t})$. Since $|\widehat{b}(s,t,w)|\leq1$ almost everywhere on $[0,1]^2\times\R^d$, we deduce from \eqref{eq:DavieSheet02dd} that 
	\begin{align*}
		&\E\Big[\exp\Big(\frac{\alpha|\varrho(x,y)|^2}{\ve\vep|x-y|^2}\Big)\Big]\\
		=&\int \E\Big[\exp\Big(\frac{\alpha\ve\vep}{|x-y|^2}\Big|\int_0^1\int_0^1\Big\{\widehat{b}^z_{s,t}(\widetilde{W}_{s,t}^{\vep,\ve}+x)-\widehat{b}^z_{s,t}(\widetilde{W}_{s,t}^{\vep,\ve}+y)\Big\}\mathrm{d}t\mathrm{d}s\Big|^2\Big)\Big]\Pb_{Z^{\ve,\vep}}(\mathrm{d}z)\leq C.
	\end{align*}
	Therefore, by Chebychev inequality, we obtain
	\begin{align*}
		\Pb\Big(|\varrho(x,y)|\geq\eta\sqrt{\ve\vep}|x-y|\Big)\leq e^{-\alpha\eta^2}\E\Big[\exp\Big(\frac{\alpha|\varrho(x,y)|^2}{\ve\vep|x-y|^2}\Big)\Big]\leq Ce^{-\alpha\eta^2}
	\end{align*}
	for any $\eta>0$. This concludes the proof. 
\end{proof}

\section{Proof of the auxiliary results}\label{Auxrel}
In this section we prove the auxiliary results stated in Section \ref{sectmasinres}. Consider $\mathbf{Q}=[-1,1]^d$ and its dyadic decomposition.  

\subsection{Regularization by noise}\label{regbynoise}

\begin{proof}[Proof of Lemma \ref{lem:PseudoMetric1}]
	Let $\mathcal{Q}$ denote the set of couples $(x,y)$ of dyadic numbers in $\mathbf{Q}$. For every integer $m\geq0$, we define
	$$\mathcal{Q}_{m}=\left\{(x,y)\in\mathcal{Q}:\,2^mx\in\Z^d,\,2^my\in\Z^d \right\}.$$ Then   $\mathcal{Q}_{m}$ has no more than $2^{2d(m+2)}$ elements and we have $\mathcal{Q}=\bigcup\limits_{m\in\N}\mathcal{Q}_{m}$. Consider the set $\mathbf{E}_{\delta,n}$ defined by
	\begin{align*}
		\mathbf{E}_{\delta,n}:=&\left\{\omega\in\Omega:\,\text{there exist }k,\kp\in\{0,1,\ldots,2^n-1\},\text{ }m\in\N^{\ast},\text{ } \text{ and }(x,y)\in\mathcal{Q}_{m}\right.\\
		&\qquad \text{ such that }
		|\varrho_{nk\kp}(x,y)|(\omega)\geq\delta(1+\sqrt{n}+\sqrt{m})2^{-n}|x-y|\}
	\end{align*} 
	for every $n\in\N$, $\delta\in\Q_+$. Observe that
	\begin{align*}
		&\mathbf{E}_{\delta,n}\\
		=&\bigcup_{k=0}^{2^n-1}\bigcup_{\kp=0}^{2^n-1}\bigcup_{m=0}^{\infty}\Big(\bigcup_{(x,y)\in\mathcal{Q}_{m}}\left\{\omega\in\Omega:\,|\varrho_{nk\kp}(x,y)|(\omega)\geq\delta(1+\sqrt{n}+\sqrt{m})2^{-n}|x-y|\right\}\Big).
	\end{align*} 
	Define also $\mathbf{E}_{\delta}$ by $\mathbf{E}_{\delta}:=\bigcup\limits_{n=0}^{\infty}\mathbf{E}_{\delta,n}$.
	Then, we deduce from \eqref{eq:EstDavieSigma1} that 
	\begin{align*}
		\Pb(\mathbf{E}_{\delta})&\leq\sum\limits_{n=0}^{\infty}\sum\limits_{k=0}^{2^n-1}\sum\limits_{\kp=0}^{2^n-1}\sum\limits_{m=0}^{\infty} \sum\limits_{(x,y)\in\mathcal{Q}_{m}}\Pb\left(|\varrho_{nk\kp}(x,y)|\geq \delta(1+\sqrt{n}+\sqrt{m})2^{-n}|x-y|\right)\\
		&\leq 2^{4d}C \sum\limits_{n=0}^{\infty}\sum\limits_{m=0}^{\infty}2^{2n}2^{2dm}e^{-\alpha\delta^2(1+n+m)},
	\end{align*}
	where $C$ and $\alpha$ are the deterministic constants in Proposition \ref{prop:DavieSheet1dd}.
	Moreover, $(\mathbf{E}_{\delta},\delta\in\Q_+)$ is a nonincreasing family and for $\delta\geq\delta_0:=\sqrt{2(d+1)\alpha^{-1}}$, we have
	\begin{align}\label{eq:EstlimPEdelta00e}
		\Pb(\mathbf{E}_{\delta})\leq 2^{4d}Ce^{-\alpha\delta^2}\,\sum\limits_{n=0}^{\infty}\sum\limits_{m=0}^{\infty}2^{-dn-2m}\leq2^{4d}C\,e^{-\alpha\delta^2}.
	\end{align}
	Therefore
	\begin{align*}
		\Pb\Big(\bigcap\limits_{\delta\in[\delta_0,\infty[\cap\Q}\mathbf{E}_{\delta}\Big)=\lim\limits_{\delta\to\infty}\Pb(\mathbf{E}_{\delta})=0.
	\end{align*} 
	Let us now define $\Omega_{1}$ by
	$$
	\Omega_{1}:=\bigcup\limits_{\delta\in[\delta_0,\infty[\cap\Q}\left(\Omega\setminus\mathbf{E}_{\delta}\right).
	$$
	Then $\Pb(\Omega_{1})=1$ and for every $\omega\in\Omega_1$, there exists $\delta_{\omega}>0$ such that 
	\begin{align}\label{eq:DaviePseudoMetricc00}
		|\varrho_{nk\kp}(x,y)(\omega)|<\delta_{\omega} 2^{-n}(1+\sqrt{n}+\sqrt{m})|x-y|
	\end{align}
	for all choices of $n$, $k$, $\kp$, $m$, and all choices of couples $(x,y)$ in $\mathcal{Q}_{m}$.\\
	Now, fix $\omega\in\Omega_1$, choose two dyadic numbers $x$, $y$ in $\mathbf{Q}$ and define $m$ by
	\begin{align}
			m:=\inf\Big\{n\in \mathbb{N} \text{ s.t. }2^{-n}\leq \max_{1\leq i\leq d}|x_i-y_i|\Big\}.
	\end{align}

		For $r\geq m$, define $x_r=(2^{-r}\floor*{2^rx_1},\ldots,2^{-r}\floor*{2^rx_d})$ and $y_r=(2^{-r}\floor*{2^ry_1},\ldots,2^{-r}\floor*{2^ry_d})$, where $ \floor*{\cdot}$ is the integer part function. Observe that $(x_m,y_m)\in\mathcal{Q}_m$, $(x_r,x_{r+1})\in\mathcal{Q}_{r+1}$ and $(y_r,y_{r+1})\in\mathcal{Q}_{r+1}$. Since for any $i\in\{1,\ldots,d\}$, $|\floor*{2^mx_i}-\floor*{2^my_i}|\leq 1+2^m|x_i-y_i|$ and $(\floor*{2x_i}-2\floor*{x_i},\floor*{2y_i}-2\floor*{y_i})\in\{0,1\}^2$, we have $|x_m-y_m|\leq3\sqrt{d}\,2^{-m}$, $|x_{r+1}-x_r|\leq\sqrt{d}\,2^{-r-1}$ and $|y_{r+1}-y_r|\leq\sqrt{d}\,2^{-r-1}$.


 It follows from the definition of $\rho_{nk}$ that 
	\begin{align}\label{eq:AddPropRho1}
		\varrho_{nk\kp}(x,y)=\varrho_{nk\kp}(x,x_m)+\varrho_{nk\kp}(x_m,y_m)+\varrho_{nk\kp}(y_m,y).
	\end{align}
	Moreover since $\varrho_{nk\kp}(x_m,x_m)=0$ and $\varrho_{nk\kp}(y_m,y_m)=0$, we obtain
	\begin{align*}
		\varrho_{nk\kp}(x_{q+1},x_m)=\sum\limits_{r=m}^{q}\rho_{nk\kp}(x_{r+1},x_{r})\,\text{ and }\varrho_{nk\kp}(y_m,y_{q+1})=\sum\limits_{r=m}^{q}\varrho_{nk\kp}(y_{r},y_{r+1})
	\end{align*}
	for every integer $q\geq m+1$. In addition for some integer $q\geq m+1$, we have $x_r=x$ and $y_r=y$ for all $r\geq q$, therefore 
	\begin{align*}
		\varrho_{nk\kp}(x,x_m)=\sum\limits_{r=m}^{\infty}\varrho_{nk\kp}(x_r,x_{r+1})\,\text{ and }\varrho_{nk\kp}(y_m,y)=\sum\limits_{r=m}^{\infty}\varrho_{nk\kp}(y_{r+1},y_{r}).
	\end{align*}
	It follows from (\ref{eq:AddPropRho1}) and (\ref{eq:DaviePseudoMetricc00}) that
	\begin{align*}
		&2^{n}|\varrho_{nk\kp}(x,y)(\omega)|\\
		&\leq3\sqrt{d}\delta_{\omega}(1+\sqrt{n}+\sqrt{m})2^{-m}+2\sqrt{d}\delta_{\omega}\sum\limits_{r=m}^{\infty}(1+\sqrt{n}+\sqrt{r+1})2^{-r-1}\\
		&\leq3\sqrt{d}\delta_{\omega}(1+\sqrt{n}+\sqrt{m})2^{-m}+2\sqrt{d}\delta_{\omega}\sqrt{n}\sum\limits_{r=m}^{\infty}2^{-r-1}+\frac{4\sqrt{d}\delta_{\omega}}{\sqrt{m+1}}\sum\limits_{r=m}^{\infty}(r+1)2^{-r-1}.
	\end{align*} 
	Using the following facts:  $\sum\limits_{r=m}^{\infty}2^{-r-1}=2^{-m}$, $\sum\limits_{r=m}^{\infty}r2^{-r+1}=(m+1)2^{-m+2}$, $2^{-m}\leq\max\limits_{1\leq i\leq d}|x_i-y_i|\leq2^{-m+1}$, $|x-y|\leq\sqrt{d}\max\limits_{1\leq i\leq d}|x_i-y_i|\leq\sqrt{d}\,|x-y|$ and $n\geq1$, we obtain
	\begin{align*}
		&2^{n}|\varrho_{nk\kp}(x,y)(\omega)|\nonumber\\
		\leq& 24\sqrt{d}\delta_{\omega}(1+\sqrt{n}+\sqrt{m})2^{-m}\leq 48d\delta_{\omega} \Big[\sqrt{n}+\Big(\log^+\frac{1}{|x-y|}\Big)^{1/2}\Big]|x-y|.\nonumber\\
	\end{align*}
	Choosing  $C_1(\omega)=48d\delta_{\omega}$ yields the desired result. 
\end{proof}


\begin{proof}[Proof of Lemma \ref{lem:PseudoMetric2}]
It suffices to show that there exists a subset $\widetilde{\Omega}_1$ of $\Omega$ with $\Pb(\widetilde{\Omega}_1)=1$ and a positive random constant $\widetilde{C}_1$ such that for any $\omega\in\widetilde{\Omega}_1$, any $n\in\N^{\ast}$, any $k,\,\kp\in\{0,1,\cdots,2^n-1\}$ and any dyadic number $x\in\mathbf{Q}$ with $\max\limits_{1\leq i\leq d}|x_i|\geq2^{-2^{2n+1}}$,
		\begin{align}\label{EqPseudoMetric2}
			|\varrho_{nk\kp}(x,0)(\omega)|\leq \widetilde{C}_1 (\omega)\sqrt{n}2^{-n}\Big(|x|+2^{-4^{n}}\Big).
		\end{align}
		In fact, using Lemma \ref{lem:PseudoMetric1}, there exist a subset $\Omega_1$ of $\Omega$ with $\Pb(\Omega_1)=1$ and a positive random constant $C_1$ such that for any $\omega\in\Omega_{1}$, any $n$, $k$, $\kp$ and any dyadic number $x\in\mathbf{Q}$ with $\max\limits_{1\leq i\leq d}|x_i|<2^{-2^{2n+1}}$, we have
		\begin{align*}
			|\varrho_{nk\kp}(x,0)(\omega)|\leq & C_1(\omega)2^{-n}\Big(\sqrt{n}+\Big(\log^+\frac{1}{|x|}\Big)^{1/2}\Big)|x|\\
			\leq &C_1(\omega)2^{-n}\sqrt{n}\,|x|+ \sqrt[4]{d}C_1(\omega)2^{-n}\,2^{-4^{n}}\Big(|x|\log^+\frac{1}{|x|}\Big)^{1/2}
			\\
			\leq& C_1(\omega)\Big(1+\sqrt[4]{dC_0^2}\Big)\sqrt{n}2^{-n}\Big(|x|+2^{-4^{n}}\Big)
			\leq C_{1,1}(\omega)\sqrt{n}2^{-n}\Big(|x|+2^{-4^{n}}\Big),
		\end{align*}
		where $C_0=\sup_{\xi\in]0,1]}\xi\log^+(1/\xi)$ and $C_{1,1}(\omega)=2C_{1}(\omega)\left(1+\sqrt[4]{dC_0^2}\right)$. 
		The required result follows by choosing $\Omega_2=\Omega_1\cap\widetilde{\Omega}_1$ and $C_2=\max\{C_1,\widetilde{C}_1\}$.\\		
		Let us now prove that \eqref{EqPseudoMetric2} holds. Define the following three sets:
		\begin{align*}
			\mathcal{O}_q:=&\{y\in\mathbf{Q}:\,\max\limits_{1\leq i\leq d}|y_i|\leq 2^{-q}\},\notag\\
			\mathcal{D}_q:=&\left\{(y,z)\in\mathcal{O}_q^2 \text{ such that there exists } r\in\N,\,r\geq q\text{ s.t. }2^ry\in\Z^d,  2^rz\in\Z^d\right\},
		\end{align*}
		and for $r\geq q$, 
		$$
		\mathcal{D}_{q,r}=\left\{(y,z)\in\mathcal{O}_q^2:\,2^ry\in\Z^d,\,2^rz\in\Z^d\right\}.
		$$
	
	Observe that $\mathcal{D}_{q,r}$ has no more than $2^{4d}\times2^{2d(r-q)}$ elements and
	$$\mathcal{D}_q=\bigcup\limits_{r=q}^{\infty}\mathcal{D}_{q,r}.$$
	In addition, define 
	\begin{align*}
		\mathbf{F}_{\delta,n}:=&\left\{\omega\in\Omega:\,\text{for some }k\in\{0,1,\cdots,2^n-1\},\text{ }q\in\{0,1,\cdots,2^{2n+1}\},\text{ }r\geq q,\text{ and }\right.\\ &\quad\left.(y,z)\in\mathcal{D}_{q,r}\text{ one has }
		\left|\varrho_{nk\kp}(y,z)\right|(\omega)\geq\delta2^{-n}(\sqrt{n}+\sqrt{r-q})|y-z|\right\}
	\end{align*}
	for every $n\in\N^{\ast}$ and $\delta>0$. Then 
	\begin{align*}
		&\mathbf{F}_{\delta,n}=\\
		&\bigcup_{k=0}^{2^n-1}\bigcup_{\kp=0}^{2^n-1}\bigcup\limits_{q=0}^{2^{2n+1}}\bigcup_{r=q}^{\infty}\Big(\bigcup_{(y,z)\in\mathcal{D}_{q,r}}\left\{\omega\in\Omega:\,\left|\varrho_{nk\kp}(y,z)\right|(\omega)\geq\delta2^{-n}(\sqrt{n}+\sqrt{r-q})|y-z|\right\}\Big).
	\end{align*}
	
	Let $\mathbf{F}_{\delta}$ be defined by $\mathbf{F}_{\delta}:=\bigcup\limits_{n=1}^{\infty}\mathbf{F}_{\delta,n}$. Next we show that $\Pb(\mathbf{F}_{\delta})$ tends to $0$ as $\delta$ goes to $\infty$. By the definition of $\mathbf{F}_{\delta}$ and using \eqref{eq:EstDavieSigma1}, we have
	\begin{align*}
		\Pb(\mathbf{F}_{\delta})
		\leq&\sum\limits_{n=1}^{\infty}\sum\limits_{k=0}^{2^n-1} \sum\limits_{\kp=0}^{2^n-1}\sum\limits_{q=0}^{2^{2n+1}}\sum\limits_{r=q}^{\infty}\sum\limits_{(y,z)\in\mathcal{D}_{q,r}}\Pb\left(\left|\varrho_{nk\kp}(y,z)\right|\geq
		\delta 2^{-n}(\sqrt{n}+\sqrt{r-q})|y-z|\right)\\
		\leq& 2^{4d}C\sum\limits_{n=1}^{\infty}\sum\limits_{q=0}^{2^{2n+1}}\sum\limits_{r=q}^{\infty}2^{2n}2^{2d(r-q)}e^{-\alpha\delta^2(n+r-q)}\\
		\leq& 2^{4d+2}C \sum\limits_{n=1}^{\infty}\sum\limits_{r=0}^{\infty}2^{4n+2dr}e^{-\alpha\delta^2(n+r)}\\
		=&4^{2d+1}C \sum\limits_{n=0}^{\infty}\sum\limits_{r=0}^{\infty}2^{4n+2dr}e^{-\alpha\delta^2(1+n+r)},
	\end{align*}
	where $C$ and $\alpha$ are the deterministic constants in Proposition \ref{prop:DavieSheet1dd}. For $\delta\geq\delta^{\ast}:=\sqrt{2(d+2)\alpha^{-1}}$, 
	\begin{align*}
		\Pb(\mathbf{F}_{\delta})&\leq4^{2d+1}Ce^{-\alpha\delta^2} \sum\limits_{n=0}^{\infty}\sum\limits_{r=0}^{\infty}2^{4n+r}2^{-\alpha\delta^2(n+r)}\\
		&\leq4^{2d+1}Ce^{-\alpha\delta^2} \sum\limits_{n=0}^{\infty}\sum\limits_{r=0}^{\infty}2^{-2dn-4r}\leq4^{2d+1}Ce^{-\alpha\delta^2}.
	\end{align*}
	It follows that $\Pb(\mathbf{F}_{\delta})$ tends to $0$ as $\delta$ goes to $\infty$. Define $\widetilde{\Omega}_14=$ by
	$$
	\widetilde{\Omega}_1:=\bigcup\limits_{\delta\in[\delta^{\ast},\infty[\cap\Q}(\Omega\setminus\mathbf{F}_{\delta}).
	$$ 
	Observe that $(\mathbf{F}_{\delta},\delta\in[\delta^{\ast},\infty[\cap\Q)$ is a nonincreasing family. Then 
	\begin{align*}
		\Pb\left(\widetilde{\Omega}_1\right)=1-\lim\limits_{\delta\to\infty}\Pb(F_{\delta})=1
	\end{align*}
	and for every $\omega\in\widetilde{\Omega}_1$, there exists $\odelta_{\omega}>0$ such that
	\begin{align}\label{eq:PseudoMetric2}
		\left|\varrho_{nk\kp}(y,z)(\omega)\right|<\odelta_{\omega}(\sqrt{n}+\sqrt{r-q})2^{-n}|y-z|
	\end{align}
	for all choices of $n$, $k$, $\kp$, $q\in\{0,1,\ldots,2^{n+1}\}$, $r\geq q$ and couples $(y,z)$ in $\mathcal{D}_{q,r}$.
	
	Now, let $m$ be the largest nonnegative integer $m$ such that $x\in\mathcal{O}_m$, i.e.$2^{-m-1}<\max\limits_{1\leq i\leq d}|x_i|\leq2^{-m}$. For $r\geq m$ define $x_r=(x_{1,r},\ldots,x_{d,r})$, where $x_{i,r}=2^{-r}\floor*{2^rx_i}$. 
	   Then $(0,x_m)\in\mathcal{D}_{m,m}$ and for every $r\geq m$, $(x_r,x_{r+1})\in\mathcal{D}_{m,r+1}$. Moreover, $|x_m-0|\leq \sqrt{d}2^{-m}$ and for every $r\geq m$, $|x_r-x_{r+1}|\leq\sqrt{d}\,2^{-r-1}$. We deduce from \eqref{eq:AddPropRho1}, \eqref{eq:PseudoMetric2} and the inequality $\sum\limits_{r=0}^{\infty}\sqrt{r}2^{-r}\leq 4$ that
	\begin{align}
		2^{n}\Big|\varrho_{nk\kp}(x,0)(\omega)\Big|&\leq 2^{n} \Big(\Big|\varrho_{nk\kp}(x_m,0)(\omega)\Big|+
		\sum\limits_{r=m}^{\infty}\Big|\varrho_{nk\kp}(x_{r+1},x_r)(\omega)\Big|\Big)\nonumber\\
		\leq&\odelta_{\omega}\sqrt{dn}2^{-m}+\sqrt{d}\odelta_{\omega}\sum\limits_{r=m}^{\infty}\left(\sqrt{n}+\sqrt{r-m}\right)2^{-r-1}\nonumber\\
		\leq&6\odelta_{\omega}\sqrt{dn} 2^{-m}\leq 12\odelta_{\omega}\sqrt{dn}|x|\leq 12\odelta_{\omega}\sqrt{dn}\Big(|x|+2^{-4^{n}}\Big).\nonumber
	\end{align}
The proof of \eqref{EqPseudoMetric2} is completed by choosing $\widetilde{C}_1=12\sqrt{d}\,\overline{\delta}_{\omega}$.
\end{proof}

\begin{proof}[Proof of Lemma \ref{lem:PseudoMetric3}]
	 
Define $\varrho_{nk\kp}^+:=\max\{0,\varrho_{nk\kp}\}$, $\varrho_{nk\kp}^-:=\max\{0,-\varrho_{nk\kp}\}$, $x^+:=(x^+_1,\ldots,x^+_d)$ and $x^-:=(x^-_1,\ldots,x^-_d)$, where $x_i^+=\max\{0,x_i\}$ and $x_i^-=\max\{0,-x_i\}$ for every $i\in\{1,\cdots,d\}$. Consider the sequence $(x^+_r)_{r\in\N}=(x^+_{r,1},\ldots, x^+_{r,d})_{r\in\N}$ in $\mathbf{Q}$ defined by $x^+_{r,i}:=1-\floor*{(1-x^+_i)2^r}2^{-r},\,\,i=1,\ldots,d$. Observe that $(x^+_r,r\in\N)$ is a coordinatewise nonincreasing sequence of dyadic numbers in $\mathbf{Q}$ converging to $x^+$. It follows from the hypothesis on $b$ and \eqref{eq:coPseudoMetric2} that 
\begin{align*}
	\varrho^+_{nk\kp}(x,0)(\omega)\leq\varrho_{nk\kp}(x^+_r,0)(\omega)\leq C_2(\omega)\sqrt{n}2^{-n}\Big(|x^+_r|+2^{-4^n}\Big)\text{ for all }n,\,k,\,\kp,\,r.
\end{align*}
Similarly for $(x^-_r)_{r\in\N}=(x^-_{r,1},\ldots, x^-_{r,d})_{r\in\N}$ defined by
$x^-_{r,i}:=1-\floor*{(1-x^-_i)2^r}2^{-r},\,\,i=1,\ldots,d$, we also have
\begin{align*}
	\varrho^-_{nk\kp}(x,0)(\omega)\leq-\varrho_{nk\kp}(-x^-,0)(\omega)\leq-\varrho_{nk\kp}(-x^-_r,0)(\omega)\leq C_2(\omega)\sqrt{n}2^{-n}\Big(|x^-_r|+2^{-4^n}\Big)\text{ for all }n,\,k,\,\kp,\,r.
\end{align*}
Hence,
\begin{align*}
	|\varrho_{nk\kp}(x,0)(\omega)|=\varrho^+_{nk\kp}(x,0)(\omega)+\varrho^-_{nk\kp}(x,0)(\omega)\leq C_2(\omega)\sqrt{n}2^{-n}\Big(|x^+_r|+|x^-_r|+2\times2^{-4^n}\Big)\text{ for all }n,\,k,\,\kp,\,r.
\end{align*}
Thus, by taking the limit as $r$ goes to $\infty$, we get
\begin{align*}
	|\varrho_{nk\kp}(x,0)(\omega)|\leq 2C_2(\omega)\sqrt{n}2^{-n}\Big(|x|+2^{-4^n}\Big)\text{ for all }n,\,k,\,\kp.
\end{align*}
Taking $\widetilde{C}_2(\omega)=2C_2(\omega)$ gives the desired result.

\end{proof}

\subsection{A Gronwall type inequality} The second and last step to prove the uniqueness result is the  version of the Gronwall inequality given in Lemma \ref{lem:GronwallSheetd}. 
Recall that 
$$u_i^{+}=\max\{0,u_i\},\,\,u_i^{-}=\max\{0,-u_i\}$$ and set $u^{+}=\left(u_1^{+},\ldots,u_d^{+}\right)$ and $u^{-}=\left(u_1^{-},\ldots,u_d^{-}\right)$.
\begin{proof}[Proof of Lemma \ref{lem:GronwallSheetd}]
	Using Lemmas \ref{lem:PseudoMetric2} and \ref{lem:PseudoMetric3}, there exists a subset $\Omega_{2}$ of $\Omega$ with $\Pb(\Omega_{2})=1$  such that for all $\omega\in\Omega_2$, we have
	\begin{align}\label{eq:coPseudoMetricc3d}
		\left|\varrho_{nk\kp}^{(i)}(0,x)(\omega)\right|:=&\Big|\int_{I_{n,k}}\int_{I_{n,\kp}}\{b_i(\xi,\zeta,W_{\xi,\zeta}(\omega)+x)-b_i(\xi,\zeta,W_{\xi,\zeta}(\omega))\}\mathrm{d}\xi\mathrm{d}\zeta\Big|\nonumber\\&\leq \widetilde{C}_{2,i}(\omega)\sqrt{n}2^{-n}\left(|x|+\beta(n)\right)\leq\widetilde{C}_{2}(\omega)\sqrt{n}2^{-n}\left(|x|+\beta(n)\right)\,\text{ on }\Omega_{2}
	\end{align}
	for all $i\in\{1,\ldots,d\}$, all integers $n,\,k,\,\kp$ with $n\geq1$, $0\leq k,\kp\leq 2^n-1$ and all $x\in\mathbf{Q}$, where $\widetilde{C}_2(\omega)=\max\{\widetilde{C}_{2,i}(\omega),\,i=1,\ldots,d\}$. Let $\omega\in\Omega_2$ and let $u$ be a solution to \eqref{eq:DavieSheetGronwalld} satisfying \eqref{eq:GronwallInitialVd}. Choose $n\in\N^{\ast}$ such that $\widetilde{C}_2(\omega)\sqrt{dn}2^{-n}\leq1/2$ and split the set $[0,1]\times[0,1]$ onto $4^n$ squares $I_{n,k}\times I_{n,\kp}$. 
	Since $u\preceq u^{+}$ and for $i\in\{1,\cdots,d\},\,b_i$, is componentwise nondecreasing,  we deduce from \eqref{eq:DavieSheetGronwalld} that for every $(s,t)\in I_{n,k}\times I_{n,\kp}$ and every $i\in\{1,\cdots,d\}$:
\begin{align*}
		&u_i(s,t)-u_i(s,\kp2^{-n})-u_i(k2^{-n},t)+u_i(2^{-n}(k,\kp))\\
		&=\int_{k2^{-n}}^{s}\int_{\kp2^{-n}}^{t}\{b_i(\xi,\zeta,W_{\xi,\zeta}(\omega)+u(\xi,\zeta))-b_i(\xi,\zeta,W_{\xi,\zeta}(\omega))\}\mathrm{d}\xi\mathrm{d}\zeta\\
		&\leq\int_{k2^{-n}}^{s}\int_{\kp2^{-n}}^{t}\{b_i(\xi,\zeta,W_{\xi,\zeta}(\omega)+u^{+}(\xi,\zeta))-b_i(\xi,\zeta,W_{\xi,\zeta}(\omega))\}\mathrm{d}\xi\mathrm{d}\zeta.
	\end{align*}
	Then, using the fact that $\max\{0,x+y\}\leq\max\{0,x\}+\max\{0,y\}$ and using once more 2. in Hypothesis \ref{hyp1}, we obtain
\begin{align*}
		u_i^{+}(s,t)&\leq \max\{0,u_i(s,\kp2^{-n})+u_i(k2^{-n},t)-u_i(2^{-n}(k,\kp))\}\\
		&\qquad+\int_{k2^{-n}}^{s}\int_{\kp2^{-n}}^{t}\{b_i(\xi,\zeta,W_{\xi,\zeta}(\omega)+u^{+}(\xi,\zeta))-b_i(\xi,\zeta,W_{\xi,\zeta}(\omega))\}\mathrm{d}\xi\mathrm{d}\zeta.
	\end{align*}
	As a consequence,
	\begin{align}\label{eq:SheetEstuPlusd}
		u_i^{+}(s,t)\leq u_i^{+}(s,\kp2^{-n})+u_i^{+}(k2^{-n},t)+u_i^{-}(2^{-n}(k,\kp))
		+\varrho^{(i)}_{nk\kp}\Big(0,\oun(k+1,\kp+1)\Big)
	\end{align}
	for all $(s,t)\in I_{n,k}\times I_{n,\kp}$, where $\oun=\Big(\oun_1,\ldots,\oun_d\Big)$.
	
	Similarly, it can be shown that
	\begin{align}\label{eq:SheetEstMinusd}
		u_i^{-}(s,t)\leq u_i^{-}(s,\kp2^{-n})+u_i^{-}(k2^{-n},t)+u_i^{+}(2^{-n}(k,\kp))
		-\varrho^{(i)}_{nk\kp}\Big(0,-\uun(k+1,\kp+1)\Big)
	\end{align}
	for all $(s,t)\in I_{n,k}\times I_{n,\kp}$, where $\uun=\Big(\uun_1,\ldots,\uun_d\Big)$. We also have the following claim:
	
	\textbf{Claim}: For all $k\in\{1,2,\cdots,2^n\}$, we have
	\begin{align}\label{eq:Sheet1DEstOun}
		\max\Big\{|\oun(k,1)|,|\oun(1,k)|\Big\}\leq 3^kd^{k/2}\Big(1+2\widetilde{C}_{2}(\omega)\sqrt{dn}2^{-n}\Big)^{k+1}\beta(n)
	\end{align}
	and
	\begin{align}\label{eq:Sheet1DEstUun}
		\max\Big\{|\uun(k,1)|,|\uun(1,k)|\Big\}\leq 3^kd^{k/2}\Big(1+2\widetilde{C}_{2}(\omega)\sqrt{dn}2^{-n}\Big)^{k+1}\beta(n),
	\end{align}
	where $|\cdot|$ denotes the usual norm in $\R^d$
	\begin{proof}[Proof of the \textbf{Claim}]
		We will only prove \eqref{eq:Sheet1DEstOun} by induction and the proof of \eqref{eq:Sheet1DEstUun} will follow analogously. Let $(s,t)\in I_{n,0}\times I_{n,0}$. Since $\max\{|u|(0,0),|u|(s,0),|u|(0,t)\}\leq\beta(n)$, using \eqref{eq:SheetEstuPlusd} and \eqref{eq:coPseudoMetricc3d}, we have
		\begin{align*}
			u_i^{+}(s,t)\leq3\beta(n)+\varrho^{(i)}_{n11}\Big(0,\oun(1,1)\Big)\leq3\beta(n)+ \widetilde{C}_{2}(\omega)\sqrt{n}2^{-n}\Big(|\oun(1,1)|+\beta(n)\Big).
		\end{align*}
		By the definition of $\oun$ and the Euclidean norm
		\begin{align*}
			|\oun(1,1)|\leq 3\sqrt{d}\beta(n)+\widetilde{C}_{2}(\omega)\sqrt{dn}2^{-n}\left(|\oun(1,1)|+\beta(n)\right).
		\end{align*}
		Then, since $(1-\widetilde{C}_{2}(\omega)\sqrt{dn}2^{-n})^{-1}\leq 1+2\widetilde{C}_{2}(\omega)\sqrt{dn}2^{-n}$, we have
		\begin{align}
			|\oun(1,1)|\leq& \Big(3\sqrt{d}+\widetilde{C}_{2}(\omega)\sqrt{dn}2^{-n}\Big)\Big(1+2\widetilde{C}_{2}(\omega)\sqrt{dn}2^{-n}\Big)\beta(n)\notag\\
			\leq&3\sqrt{d}\Big(1+2\widetilde{C}_{2}(\omega)\sqrt{dn}2^{-n}\Big)^2\beta(n).
		\end{align}
		Now, let $k\in\{1,\ldots,2^n-1\}$ and by induction suppose 
		\begin{align}\label{eq:SheetInductionHypd}
			\max\Big\{|\oun(k,1)|,|\oun(1,k)|\Big\}\leq 3^kd^{k/2}\Big(1+2\widetilde{C}_{2}(\omega)\sqrt{dn}2^{-n}\Big)^{k+1}\beta(n).
		\end{align}
		Let $(s,t)\in I_{n,k}\times I_{n,0}$. Since $\max\{|u|(s,0),|u|(q2^{-n},0)\}\leq\beta(n)$, and using once more \eqref{eq:SheetEstuPlusd}, we have
		\begin{align*}
			u_i^{+}(s,t)&\leq 2\beta(n)+u_i^{+}(k2^{-n},t)+\varrho^{(i)}_{nk1}(0,\oun(k+1,1))\\
			&\leq 2\beta(n)+\oun_i(k,1)+\widetilde{C}_{2}(\omega)\sqrt{n}2^{-n}(|\oun(k+1,1)|+\beta(n))\text{ for every }i.
		\end{align*}
		Hence
		\begin{align*}
			\oun_i(k+1,1)\leq 2\beta(n)+\oun_i(k,1)+\widetilde{C}_{2}(\omega)\sqrt{n}2^{-n}(|\oun(k+1,1)|+\beta(n)) \text{ for every }i.
		\end{align*}
		As a consequence,
		\begin{align*}
			|\oun(k+1,1)|\leq2\sqrt{d}\beta(n)+|\oun(k,1)|+\widetilde{C}_{2}(\omega)\sqrt{dn}2^{-n}(|\oun(k+1,1)|+\beta(n)),
		\end{align*}
		yielding to
		\begin{align*}
			|\oun(k+1,1)|\leq \left(1+2\widetilde{C}_{2}(\omega)\sqrt{dn}2^{-n}\right)\left(2\sqrt{d}\beta(n)+|\oun(k,1)|+\widetilde{C}_{2}(\omega)\sqrt{dn}2^{-n}\beta(n)\right).
		\end{align*}
		Using $\widetilde{C}_{2}(\omega)\sqrt{n}2^{-n}\leq1$, we deduce from \eqref{eq:SheetInductionHypd} that
		\begin{align}\label{eq:SheetInductionHypS1Ad}
			|\oun(k+1,1)|&\leq\Big(1+2\widetilde{C}_{2}(\omega)\sqrt{dn}2^{-n}\Big)\Big[3^kd^{k/2}(1+2\widetilde{C}_{2}(\omega)\sqrt{dn}2^{-n})^{k+1}+3\sqrt{d}\Big]\beta(n)\nonumber\\
			&\leq \Big(3^kd^{k/2}+3\sqrt{d}\Big)\Big(1+2\widetilde{C}_{2}(\omega)\sqrt{dn}2^{-n}\Big)^{k+2}\beta(n)\nonumber\\
			&\leq3^{k+1}d^{(k+1)/2}\Big(1+2\widetilde{C}_{2}(\omega)\sqrt{dn}2^{-n}\Big)^{k+2}\beta(n).
		\end{align}
		It can also be shown analogously that
		\begin{align}\label{eq:SheetInductionHypS1Bd}
			|\oun(1,k+1)|\leq 3^{k+1}d^{(k+1)/2}\Big(1+2\widetilde{C}_{2}(\omega)\sqrt{dn}2^{-n}\Big)^{k+2}\beta(n).
		\end{align}
		This ends the proof of \eqref{eq:Sheet1DEstOun}. 
	\end{proof}

	Now we prove by induction that $k,\kp\in\{1,\ldots,2^n\}$,
	\begin{align*} 
		\max\left\{|\oun(k,\kp)|,|\uun(k,\kp)|\right\}\leq \left(3\sqrt{d}\right)^{k+\kp-1}\left(1+2\widetilde{C}_{2}(\omega)\sqrt{dn}2^{-n}\right)^{k+\kp}\beta(n).
	\end{align*}
	We deduce from \eqref{eq:Sheet1DEstOun} and \eqref{eq:Sheet1DEstUun} that \eqref{eq:GronwallSheetEstd} holds for all couples $(1,k)$, $(k,1)$, $k\in\{1,2,\ldots,2^n\}$. Fix $(k,\kp)\in\{1,2,\ldots,2^n\}$ and suppose  \eqref{eq:GronwallSheetEstd} holds for $(k,\kp)$, $(k+1,\kp)$ and $(k,\kp+1)$. It follows from (\ref{eq:SheetEstuPlusd}) that for every $(s,t)\in I_{n,k}\times I_{n,\kp}$ and every $i\in\{1,\ldots,d\}$,
	\begin{align*}
		u_i^{+}(s,t)\leq u_i^{+}(2^{-n}k,t)+u_i^{+}(s,2^{-n}\kp)+u_i^{-}(2^{-n}k,2^{-n}\kp)+\varrho^{(i)}_{nk\kp}\left(0,\oun(k+1,\kp+1)\right) 
	\end{align*}
	and by \eqref{eq:coPseudoMetricc3d},
	\begin{align*}
		&\oun_i(k+1,\kp+1)\\
		&\leq \oun_i(k,\kp+1)+\oun_i(k+1,\kp)+\uun_i(k,\kp)+\widetilde{C}_{2}(\omega)\sqrt{n}2^{-n}\Big(|\oun(k+1,\kp+1)|+\beta(n)\Big).
	\end{align*}
	Hence
	\begin{align*}
		&|\oun(k+1,\kp+1)|\\
		&\leq |\oun(k,\kp+1)|+|\oun(k+1,\kp)|+|\uun(k,\kp)|+\widetilde{C}_{2}(\omega)\sqrt{dn}2^{-n}\Big(|\oun(k+1,\kp+1)|+\beta(n)\Big),
	\end{align*}
	that is 
	\begin{align*}
		&(1-\widetilde{C}_{2}(\omega)\sqrt{dn}2^{-n})|\oun(k+1,\kp+1)|\\
		&\leq |\oun(k,\kp+1)|+|\oun(k+1,\kp)|+|\uun(k,\kp)|+\widetilde{C}_{2}(\omega)\sqrt{dn}2^{-n}\beta(n).
	\end{align*}
	Since \eqref{eq:GronwallSheetEstd} holds for $(k,\kp)$, $(k+1,\kp)$ and $(k,\kp+1)$, we obtain
	\begin{align*}
		&(1-\widetilde{C}_{2}(\omega)\sqrt{dn}2^{-n})|\oun(k+1,\kp+1)|\\
		\leq&\Big\{2(3\sqrt{d})^{k+\kp}\Big(1+2\widetilde{C}_{2}(\omega)\sqrt{dn}2^{-n}\Big)^{k+\kp+1}\\
		&+(3\sqrt{d})^{k+\kp-1}\Big(1+2\widetilde{C}_{2}(\omega)\sqrt{dn}2^{-n}\Big)^{k+\kp}+\widetilde{C}_{2}(\omega)\sqrt{dn}2^{-n}\Big\}\beta(n)\\
		\leq&\left(1+2\widetilde{C}_{2}(\omega)\sqrt{dn}2^{-n}\right)^{k+\kp+1}\Big(2(3\sqrt{d})^{k+\kp}+(3\sqrt{d})^{k+\kp-1}+1\Big)\beta(n).
	\end{align*}
	Using the inequalities $$(1-\widetilde{C}_{2}(\omega)\sqrt{dn}2^{-n})^{-1}\leq(1+2\widetilde{C}_{2}(\omega)\sqrt{dn}2^{-n})$$ and $$2(3\sqrt{d})^{k+\kp}+(3\sqrt{d})^{k+\kp-1}+1\leq (3\sqrt{d})^{k+\kp+1},$$ we have
	\begin{align*}
		|\oun(k+1,\kp+1)|\leq\left(3\sqrt{d}\right)^{k+\kp+1}\left(1+2\widetilde{C}_{2}(\omega)\sqrt{dn}2^{-n}\right)^{k+\kp+2}\beta(n).
	\end{align*}
	Similarly, we show that
	\begin{align*}
		|\uun(k+1,\kp+1)|\leq\left(3\sqrt{d}\right)^{k+\kp+1}\left(1+2\widetilde{C}_{2}(\omega)\sqrt{dn}2^{-n}\right)^{k+\kp+2}\beta(n).
	\end{align*}
The proof is completed by choosing $C_1=2\widetilde{C}_2$.
\end{proof}
\appendix
\section{Appendix}
In this section we provide a weak existense result for SDEs driven by Brownian sheet under the linear growth condition 
\begin{theorem}\label{theqweak1}
	Suppose there exists $M>0$ such that
	\begin{align*}
		|b(s,t,x)|\leq M(1+|x|),\quad\forall\,(s,t,x)\in[0,1]^2\times\R^d.
	\end{align*}
	Then \eqref{eqmainre1b} has a weak solution.
\end{theorem}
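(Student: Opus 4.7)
The plan is to run a Girsanov-type argument on the plane combined with a Bene\v{s}-style truncation that exploits the spatial linear growth of $b$. Starting from a $d$-dimensional Brownian sheet $W$ with $\partial W=0$ on a filtered probability space $(\Omega,\mathcal{F},(\mathcal{F}_{s,t}),\mathbb{P})$, I will define the candidate density
\begin{align*}
L=\exp\Big(\int_0^1\int_0^1 b(\xi,\zeta,W_{\xi,\zeta})\mathrm{d}W_{\xi,\zeta}-\tfrac{1}{2}\int_0^1\int_0^1|b(\xi,\zeta,W_{\xi,\zeta})|^2\mathrm{d}\xi\mathrm{d}\zeta\Big),
\end{align*}
where the stochastic integral is taken in the Cairoli--Walsh sense (which is legitimate since $|b(\cdot,W)|\le M(1+|W|)$ is square-integrable with respect to $\mathrm{d}\xi\mathrm{d}\zeta\otimes\mathrm{d}\mathbb{P}$). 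Once I establish $\mathbb{E}_{\mathbb{P}}[L]=1$, the Girsanov theorem for Brownian sheets (cited in the paper as \cite{Ca72,Im88}) yields that under $\mathrm{d}\widetilde{\mathbb{P}}=L\,\mathrm{d}\mathbb{P}$ the process $\widetilde{W}_{s,t}:=W_{s,t}-\int_0^s\int_0^t b(\xi,\zeta,W_{\xi,\zeta})\mathrm{d}\xi\mathrm{d}\zeta$ is a Brownian sheet, so $(W,\widetilde{W})$ on $(\Omega,\mathcal{F},(\mathcal{F}_{s,t}),\widetilde{\mathbb{P}})$ provides a weak solution of \eqref{eqmainre1b}.

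To obtain $\mathbb{E}_{\mathbb{P}}[L]=1$, I will approximate by spatially truncated drifts. For $R>0$, let $\pi_R$ denote the radial retraction of $\mathbb{R}^d$ onto the closed ball of radius $R$ and set $b^R(s,t,x):=b(s,t,\pi_R(x))$; then $|b^R|\le M(1+R)$ is bounded, the bound $|b^R(s,t,x)|\le M(1+|x|)$ is preserved, and $b^R(s,t,y)=b(s,t,y)$ whenever $|y|\le R$. Since $b^R$ is bounded, Novikov's condition is trivial, so the corresponding exponential $L^R$ satisfies $\mathbb{E}_{\mathbb{P}}[L^R]=1$, and under $\mathrm{d}\widetilde{\mathbb{P}}_R=L^R\,\mathrm{d}\mathbb{P}$ the shifted process $\widetilde{W}^R:=W-\int\!\!\int b^R(\cdot,W)\mathrm{d}\xi\mathrm{d}\zeta$ is a Brownian sheet. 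On the event $\{\sup|W|\le R\}$ the integrands appearing in $L$ and $L^R$ coincide path-wise, hence $L^R=L$ on that event; decomposing accordingly I get
\begin{align*}
\mathbb{E}_{\mathbb{P}}[L\,\mathbf{1}_{\sup|W|\le R}]=\mathbb{E}_{\mathbb{P}}[L^R\,\mathbf{1}_{\sup|W|\le R}]=1-\widetilde{\mathbb{P}}_R\big(\sup|W|>R\big),
\end{align*}
so monotone convergence reduces the proof to showing that $\widetilde{\mathbb{P}}_R(\sup|W|>R)\to 0$ as $R\to\infty$.

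For this last step I will invoke Snow's Gronwall inequality on the plane (the ingredient used already in the proof of Theorem \ref{maintheuniq1}, with Bessel kernel $I_0$) applied to the identity $W=\widetilde{W}^R+\int\!\!\int b^R(\cdot,W)$, valid $\widetilde{\mathbb{P}}_R$-a.s., together with $|b^R(s,t,x)|\le M(1+|x|)$. This yields a deterministic constant $C^\ast=1+MI_0(2\sqrt{M})$, independent of $R$, such that
\begin{align*}
\sup_{(s,t)\in[0,1]^2}|W_{s,t}|\le C^\ast\Big(1+\sup_{(s,t)\in[0,1]^2}|\widetilde{W}^R_{s,t}|\Big)\qquad\widetilde{\mathbb{P}}_R\text{-a.s.}
\end{align*}
Since $\widetilde{W}^R$ is a Brownian sheet under $\widetilde{\mathbb{P}}_R$, the law of $\sup|\widetilde{W}^R|$ under $\widetilde{\mathbb{P}}_R$ is the same as the law of $\sup|W|$ under $\mathbb{P}$, uniformly in $R$, and hence $\widetilde{\mathbb{P}}_R(\sup|W|>R)\le\mathbb{P}(\sup|W|>R/C^\ast-1)\to 0$. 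The main obstacle in the argument is precisely this uniform-in-$R$ comparison of $\sup|W|$ with $\sup|\widetilde{W}^R|$, which rests entirely on the plane Gronwall estimate; once it is in place, the remaining bookkeeping is a direct transcription of Bene\v{s}'s classical one-parameter argument to the two-parameter setting.
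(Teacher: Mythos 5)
Your Girsanov setup is the right framework, and the truncation device you use to verify $\mathbb{E}_{\mathbb{P}}[L]=1$ is a genuinely different route from the paper's. The paper defines the one--parameter process $\mathcal{Z}_t=\exp\big(\int_0^1\int_0^tb(s,\zeta,X_{s,\zeta})\mathrm{d}X_{s,\zeta}-\tfrac12\int_0^1\int_0^t|b|^2\big)$ and asserts, citing the Doob--Cairoli--Imkeller maximal inequalities for two--parameter martingales, that $(\mathcal{Z}_t)_{t\in[0,1]}$ is an $(\mathcal{F}_{1,t})$--martingale; it gives no further detail. You instead run a Bene\v{s}--type truncation: $b^R$ is bounded so Novikov is trivial, the Snow/Rasmussen Gronwall inequality on the plane with Bessel constant $C^\ast=1+MI_0(2\sqrt M)$ controls $\sup|W|$ by $\sup|\widetilde W^R|$ uniformly in $R$ under $\widetilde{\mathbb{P}}_R$, and monotone convergence finishes. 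This makes the argument explicit where the paper's is a pointer to the literature, and it also recycles exactly the Gronwall machinery used in the proof of Theorem \ref{maintheuniq1}, which is a nice structural economy.

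One step deserves more care than you give it, and it is not the Gronwall comparison you flag as the main obstacle. The identity $L^R=L$ a.s.\ on $\{\sup_{[0,1]^2}|W|\le R\}$ is not a pathwise statement: the two quantities differ by the stochastic integral $\int\!\!\int\big(b-b^R\big)(\xi,\zeta,W_{\xi,\zeta})\,\mathrm{d}W_{\xi,\zeta}$, and stochastic integrals are not defined $\omega$-by-$\omega$. In one parameter this is resolved by stopping at $\tau_R=\inf\{t:|W_t|>R\}$; in the plane there is no single stopping time, and you need either the two--parameter stopping--region/localization theory, or an argument that a two--parameter $L^2$ martingale with vanishing quadratic variation on an event vanishes there (which again requires some localization). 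The conclusion is correct, but it is precisely the kind of two--parameter subtlety the rest of the paper is careful about, so it should be addressed rather than asserted. Also a small slip: from $|W_{s,t}|\le M+|\widetilde W^R_{s,t}|+M\int_0^s\int_0^t|W|$ the Gronwall bound gives $\sup|W|\le C^\ast\big(M+\sup|\widetilde W^R|\big)$, not $C^\ast\big(1+\sup|\widetilde W^R|\big)$; harmless for the tail estimate, but worth stating correctly.
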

The above result is a direct consequence of the Cameron-Martin-Girsanov theorem for two-parameter processes (see e.g. \cite[Theorem 3.5]{DM15}, \cite[Proposition 1.6]{NP94}). Indeed if $X=(X_{s,t},(s,t)\in[0,1]^2)$ is a $d$-dimensional Brownian sheet given on a filtered probability space $(\Omega,\mathcal{F},(\mathcal{F}_{s,t},(s,t)\in [0,1]^2),\Pb)$, then, using Doob-Cairoli-Imkeller's maximal inequalities for two-parameter martingales (see e.g. \cite[Theorem 1]{Ca70}, \cite[Chapter II, Section 8, Theorem 1]{Im88}), one can show that the process $(\mathcal{Z}_t,t\in[0,1])$ defined by

\begin{align*}
	\mathcal{Z}_t=\exp\Big(\int_0^1\int_0^tb(s,\zeta,X_{s,\zeta}) \mathrm{d}X_{s,\zeta}-\dfrac{1}{2}\int_0^1\int_0^t|b(s,\zeta,X_{s,\zeta})|^2\mathrm{d}s \mathrm{d}\zeta\Big)
\end{align*}
is a martingale with respect to {the filtration $(\mathcal{F}_{1,t},t\in[0,1])$}. Then, by \cite[Theorem 3.5]{DM15}, the process $(W_{s,t},(s,t)\in[0,1]^2)$ given by
\begin{align*}
	W_{s,t}=X_{s,t}-X_{s,0}-X_{0,t}+X_{0,0}-\int_0^s\int_0^tb(\xi,\zeta,X_{\xi,\zeta})\mathrm{d}\xi \mathrm{d}\zeta,\quad\forall\,(s,t)\in[0,1]^2,
\end{align*}
is a $\R^d$-valued $(\mathcal{F}_{s,t})$-Brownian sheet with $\partial W=0$ under the probability $\widetilde{\Pb}$ defined by 
\begin{align*}
	\frac{\mathrm{d}\widetilde{\Pb}}{\mathrm{d}\Pb}=\mathcal{Z}_1.
\end{align*}

\section*{Ackowledgment}

The authors wish to thank an anonymous referee and the editor for their valuable comments and suggestions. 



\begin{thebibliography}{10}
	
	\bibitem{ABP17a}
	O.~Amine, David Ba{\~n}os, and Frank Proske.
	\newblock {$C^{\infty}$} regularization by noise of singular {ODE's}.
	\newblock {\em arXiv preprint arXiv:1710.05760}, 2017.
	
	\bibitem{AMP20}
	O.~Amine, A.-R. Mansouri, and F.~Proske.
	\newblock Well-posedness of the deterministic transport equation with singular
	velocity field perturbed along fractional brownian paths.
	\newblock {\em arXiv preprint arXiv:2003.06200}, 2020.
	
	\bibitem{BY82}
	M.~T. Barlow and M.~Yor.
	\newblock Semi-martingale inequalities via the garsia-rodemich-rumsey lemma,
	and applications to local times.
	\newblock {\em Journal of Functional Analysis}, 49(2):198--229, 1982.
	
	\bibitem{BFGM14}
	L.~Beck, F.~Flandoli, M.~Gubinelli, and M.~Maurelli.
	\newblock Stochastic {ODE}s and stochastic linear {PDE}s with critical drift:
	regularity, duality and uniqueness.
	\newblock {\em Electronic Journal of Probability}, 24, 2019.
	
	\bibitem{BDM21}
	A.-M. Bogso, M.~Dieye, and O.~Menoukeu~Pamen.
	\newblock Stochastic integration with respect to local time of the brownian
	sheet and regularising properties of brownian sheet paths.
	\newblock {\em arXiv preprint arXiv:2112.00401}, 2021.
	
	\bibitem{BM16}
	O.~Butkovsky and L.~Mytnik.
	\newblock Regularization by noise and flows of solutions for a stochastic heat
	equation.
	\newblock {\em Annals of Probability}, 47:169--212, 2019.
	
	\bibitem{Ca70}
	R.~Cairoli.
	\newblock Une in{\'e}galit{\'e} pour martingales {\`a} indices multiples et ses
	applications.
	\newblock {\em S{\'e}minaire de probabilit{\'e}s de Strasbourg}, 4:1--27, 1970.
	
	\bibitem{Ca72}
	R.~Cairoli.
	\newblock Sur une {\'e}quation diff{\'e}rentielle stochastique.
	\newblock {\em C.R. Acad. Sci. Paris S{\'e}rie A}, 274:1739--1742, 1972.
	
	\bibitem{CK91}
	E.~Carlen and P.~Kree.
	\newblock Lp estimates on iterated stochastic integrals.
	\newblock {\em The Annals of Probability}, pages 354--368, 1991.
	
	\bibitem{CaNu88}
	R.~Carmona and D.~Nualart.
	\newblock Random non-linear wave equations: smoothness of the solutions.
	\newblock {\em Probability Theory and Related Fields}, 79(4):469--508, 1988.
	
	\bibitem{CG16}
	R.~Catellier and M.~Gubinelli.
	\newblock Averaging along irregular curves and regularisation of {ODE}s.
	\newblock {\em Stochastic Process. Appl.}, 126:2323--2366, 2016.
	
	\bibitem{DM15}
	R.~C. Dalang and C.~Mueller.
	\newblock Multiple points of the brownian sheet in critical dimensions.
	\newblock {\em The Annals of Probability}, 43(4):1577--1593, 2015.
	
	\bibitem{Da07}
	A.~M. Davie.
	\newblock Uniqueness of solutions of stochastic differential equations.
	\newblock {\em International Mathematics Research Notices}, Vol. 2007, 2007.
	
	\bibitem{DavieM}
	A.~M. Davie.
	\newblock Individual path uniqueness of solutions of stochastic differential
	equations.
	\newblock In {\em In Stochastic Analysis}, pages 213--225. Springer-Heidelberg,
	2011.
	
	\bibitem{De70}
	K.~Deimling.
	\newblock A carath{\'e}odory theory for systems of integral equations.
	\newblock {\em Annali di Matematica Pura ed Applicata}, 86(1):217--260, 1970.
	
	\bibitem{Ei00}
	N.~Eisenbaum.
	\newblock Integration with respect to local time.
	\newblock {\em Potential analysis}, 13(4):303--328, 2000.
	
	\bibitem{ENO03}
	M.~Erraoui, Y.~Ouknine, and D.~Nualart.
	\newblock Hyperbolic stochastic partial differential equations with additive
	fractional brownian sheet.
	\newblock {\em Stochastics and Dynamics}, 3(02):121--139, 2003.
	
	\bibitem{FaNu93}
	M.~Farr{\'e} and D.~Nualart.
	\newblock Nonlinear stochastic integral equations in the plane.
	\newblock {\em Stochastic processes and their applications}, 46(2):219--239,
	1993.
	
	\bibitem{Fla10}
	F.~Flandoli.
	\newblock {\em Random Perturbation of PDE's and Fluid Dynamic Models: {\'E}cole
		d'{\'e}t{\'e} de Probabilit{\'e}s de Saint-Flour XL 2010}, volume 2015.
	\newblock Springer Science \& Business Media, 2010.
	
	\bibitem{GG21}
	L.~Galeati and M.~Gubinelli.
	\newblock Noiseless regularisation by noise.
	\newblock {\em Revista Matem{\'a}tica Iberoamericana}, 38(2):433--502, 2021.
	
	\bibitem{HP21}
	F.~A. Harang and N.~Perkowski.
	\newblock C$^{\infty}$ regularization of odes perturbed by noise.
	\newblock {\em Stochastics and Dynamics}, 21(08):2140010, 2021.
	
	\bibitem{Im88}
	P.~Imkeller.
	\newblock {\em Two-parameter martingales and their quadratic variation}, volume
	1308.
	\newblock Springer, 1988.
	
	\bibitem{KP20a}
	H.~Kremp and N.~Perkowski.
	\newblock Multidimensional {SDE} with distributional drift and l{\'e}vy noise.
	\newblock {\em Bernoulli}, 28(3):1757--1783, 2022.
	
	\bibitem{Nu06}
	D.~Nualart.
	\newblock {\em The Malliavin calculus and related topics}, volume 1995.
	\newblock Springer, 2006.
	
	\bibitem{NP94}
	D.~Nualart and E.~Pardoux.
	\newblock Markov field properties of solutions of white noise driven
	quasi-linear parabolic pdes.
	\newblock {\em Stochastics: An International Journal of Probability and
		Stochastic Processes}, 48(1-2):17--44, 1994.
	
	\bibitem{NuTi97}
	D.~Nualart and S.~Tindel.
	\newblock Quasilinear stochastic hyperbolic differential equations with
	nondecreasing coefficient.
	\newblock {\em Potential Analysis}, 7(3):661--680, 1997.
	
	\bibitem{NuTi98}
	D.~Nualart and S.~Tindel.
	\newblock On two-parameter non-degenerate brownian martingales.
	\newblock {\em Bulletin des sciences mathematiques}, 122(4):317--335, 1998.
	
	\bibitem{NuYe89}
	D.~Nualart and J.~Yeh.
	\newblock Existence and uniqueness of a strong solution to stochastic
	differential equations in the plane with stochastic boundary process.
	\newblock {\em Journal of Multivariate Analysis}, 28(1):149--171, 1989.
	
	\bibitem{Pri18}
	E.~Priola.
	\newblock Davie's type uniqueness for a class of {SDE}s with jumps.
	\newblock {\em Ann. Inst Henri Poincar\'e: Prob. Stat}, 54:694--725, 2018.
	
	\bibitem{Pri19}
	E.~Priola.
	\newblock On davie's uniqueness for some degenerate {SDE}s.
	\newblock arXiv:1912.02776, 2019.
	
	\bibitem{QuTi07}
	L.~Quer-Sardanyons and S.~Tindel.
	\newblock The 1-d stochastic wave equation driven by a fractional brownian
	sheet.
	\newblock {\em Stochastic processes and their applications},
	117(10):1448--1472, 2007.
	
	\bibitem{Ra76}
	D.~L. Rasmussen.
	\newblock Gronwall{'s} inequality for functions of two independent variables.
	\newblock {\em Journal of Mathematical Analysis and Applications},
	55(2):407--417, 1976.
	
	\bibitem{Sh16}
	A.~V. Shaposhnikov.
	\newblock Some remarks on davi{e}'s uniqueness theorem.
	\newblock {\em Proceedings of the Edinburgh Mathematical Society},
	59(4):1019--1035, 2016.
	
	\bibitem{ShWr20}
	Alexander Shaposhnikov and Lukas Wresch.
	\newblock Pathwise vs. path-by-path uniqueness.
	\newblock {\em arXiv preprint arXiv:2001.02869}, 2020.
	
	\bibitem{Sno72}
	D.~R. Snow.
	\newblock Gronwall{'s} inequality for systems of partial differential equations
	in two independent variables.
	\newblock {\em Proceedings of the American Mathematical Society}, 33(1):46--54,
	1972.
	
	\bibitem{Tu83}
	C.~Tudor.
	\newblock On the two parameter {It\^o} equations.
	\newblock {\em Publications math{\'e}matiques et informatique de Rennes},
	(1):1--17, 1983.
	
	\bibitem{Wa78}
	J.~B. Walsh.
	\newblock The local time of the brownian sheet.
	\newblock {\em Ast{\'e}risque}, 52(53):47--61, 1978.
	
	\bibitem{CW75}
	R~Walsh and Cairoli~J. B.
	\newblock Stochastic integrals in the plane.
	\newblock {\em Acta Math}, 134:111--183, 1975.
	
	\bibitem{Ye81}
	J.~Yeh.
	\newblock Existence of strong solutions for stochastic differential equations
	in the plane.
	\newblock {\em Pacific Journal of Mathematics}, 97(1):217--247, 1981.
	
	\bibitem{Ye85}
	J.~Yeh.
	\newblock Existence of weak solutions to stochastic differential equations in
	the plane with continuous coefficients.
	\newblock {\em Transactions of the American Mathematical Society},
	290(1):345--361, 1985.
	
	\bibitem{Ye87}
	J.~Yeh.
	\newblock Uniqueness of strong solutions to stochastic differential equations
	in the plane with deterministic boundary process.
	\newblock {\em Pacific journal of mathematics}, 128(2):391--400, 1987.
	
\end{thebibliography}

\end{document}